\theoremstyle{plain}
\newtheorem{Thm}{Theorem}
\newtheorem{Coro}[Thm]{Corollary}
\newtheorem{Lem}[Thm]{Lemma}
\theoremstyle{definition}
\newtheorem{Def}[Thm]{Definition}
\begin{document}

\title{Non-uniqueness of high distance Heegaard splittings}

\author{Jesse Johnson}
\address{\hskip-\parindent
        Department of Mathematics \\
        Oklahoma State University \\
        Stillwater, OK 74078 \\
        USA}
\email{jjohnson@math.okstate.edu}

\subjclass{Primary 57N10}
\keywords{Heegaard splitting}

\thanks{This project was supported by NSF Grant DMS-1006369}

\begin{abstract}
Kevin Hartshorn showed that if a three-dimensional manifold $M$ admits a Heegaard surface $\Sigma$ with Hempel distance $d$ then every incompressible surface in $M$ has genus at least $\frac{d}{2}$. Scharlemann-Tomova generalized this, proving that in such a manifold, every other Heegaard surface for $M$ of genus $g' < \frac{d}{2}$ is a stabilization of $\Sigma$. In the present paper, we show that Hartshorn's bound is sharp and Scharlemann-Tomova's bound is very close to sharp. In particular, for every pair of integers $g \geq 2, d \geq 2$, we construct a three-manifold $M$ with a genus $g$, distance $d$ Heegaard splitting and an incompressible surface of genus $\frac{d}{2}$. We also construct, for every $d \geq 4$, a three-manifold with a genus $g$, distance $d$ Heegaard surface $\Sigma$ and a second Heegaard surface with genus $g' = \frac{1}{2} d + g - 1$ that is not a stabilization of $\Sigma$.
\end{abstract}

\maketitle

A \textit{handlebody} is a three-dimensional manifold homeomorphic to a regular neighborhood of a graph embedded in $\mathbf{R}^3$. A \textit{Heegaard splitting} $(\Sigma, H^-, H^+)$ of a compact, connected, closed, orientable three-dimensional manifold $M$ is a decomposition of $M$ into handlebodies $H^-, H^+ \subset M$ whose intersection is precisely their common boundary surface $\Sigma = \partial H^- = \partial H^+$. 

The \textit{(Hempel) distance} $d(\Sigma)$ of a Heegaard splitting is a measure of its complexity introduced by Hempel~\cite{Hempel} that will be defined carefully below. Work by Namazi-Souto~\cite{Namazi-Souto}, Hass-Thompson-Thurston~\cite{Hass-Thompson-Thurston} and others has shown that the distance of a Heegaard splitting is closely related to the large-scale (hyperbolic) geometric structure of the ambient three-manifold. However, the distance $d(\Sigma)$ is a topological concept and thus deeply connected to the topology of $M$. 

Most notably, Hartshorn~\cite{Hartshorn} showed that if $M$ admits a high distance Heegaard splitting $(\Sigma, H^-, H^+)$ then every incompressible surface in $M$ has genus at least $\frac{1}{2} d(\Sigma)$. The first result in this paper shows that Hartshorn's bound is sharp.

\begin{Thm}
\label{thm:main1}
For every pair of positive integers $d \geq 2$, $g \geq 2$ with $d$ even, there is a compact, connected, closed, orientable three-manifold $M$ with both a genus $g$ Heegaard surface $\Sigma$ such that $d(\Sigma) = d$, and a separating, two-sided, closed, embedded, incompressible surface of genus $\frac{1}{2}d$.
\end{Thm}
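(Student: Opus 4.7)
The plan is to construct $M$ by gluing two compact $3$-manifolds along a closed surface $S$ of genus $h := d/2$, arranged so that $S$ is incompressible on each side and so that amalgamating local Heegaard splittings yields a genus-$g$ Heegaard surface in $M$. Pick integers $g^-, g^+ \geq h + 1$ with $g^- + g^+ = g + h$, and build compact orientable $3$-manifolds $Y^-, Y^+$, each with a single boundary component homeomorphic to $S$ that is incompressible in it, and each admitting a Heegaard surface $\Sigma^\pm$ of genus $g^\pm$. Such pieces can be made concretely, for instance as exteriors of suitably knotted genus-$h$ handlebodies in closed $3$-manifolds of Heegaard genus $g^\pm$. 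Fix a pseudo-Anosov $\phi : S \to S$, and for a large positive integer $n$ set $M := Y^- \cup_{\phi^n} Y^+$.

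The verification splits into three ingredients. First, since $S$ is incompressible in each $Y^\pm$, a standard innermost-disk argument shows that $S$ remains incompressible in $M$. Second, applying the Schultens--Lackenby amalgamation procedure to $\Sigma^-$ and $\Sigma^+$ across $S$ produces a closed Heegaard surface $\Sigma \subset M$ of genus $g^- + g^+ - h = g$, dividing $M$ into two handlebodies. Third, Hartshorn's theorem applied to $S$ gives the upper bound $d(\Sigma) \leq 2h = d$ for free, so the remaining task is the matching lower bound $d(\Sigma) \geq d$, which I expect to follow by taking $n$ sufficiently large.

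For the lower bound I would use a subsurface-projection argument in the spirit of Masur--Minsky and Bachman--Schleimer. The amalgamation identifies $S$ with a subsurface $W \subset \Sigma$ (obtained by removing the regions where the amalgamation tubes pass through), giving a subsurface projection from $\mathcal{C}(\Sigma)$ to $\mathcal{C}(W) \cong \mathcal{C}(S)$. The disk sets $\mathcal{D}^\pm$ of the two handlebodies project to subsets of $\mathcal{C}(S)$ of diameter bounded in terms of $Y^\pm$ alone, independently of $n$. Composing with $\phi^n$ translates the $Y^+$-side projection arbitrarily far in $\mathcal{C}(S)$ from the $Y^-$-side projection; by the Bounded Geodesic Image Theorem, any geodesic in $\mathcal{C}(\Sigma)$ between $\mathcal{D}^-$ and $\mathcal{D}^+$ must therefore be long. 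Taking $n$ large enough forces $d(\Sigma) \geq d$, and combined with the Hartshorn upper bound we obtain $d(\Sigma) = d$.

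The main obstacle I anticipate is the range $g < h + 2$, where the decomposition $g^- + g^+ = g + h$ with each $g^\pm \geq h + 1$ has no solution, since the requirement that $\partial Y^\pm$ be incompressible forces the Heegaard genus of each piece to be at least $h + 1$. In that range the construction must be modified -- for example, by taking a single manifold $Y$ with two boundary components each homeomorphic to $S$ and self-amalgamating via $\phi^n$, with a correspondingly adjusted genus formula, or by trading handles between the two sides -- and the curve-complex argument for the distance lower bound must be re-derived in that setting, though with the same outline.
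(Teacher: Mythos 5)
Your proposal takes a genuinely different route from the paper: you glue two pieces $Y^\pm$ along the prospective incompressible surface $S$ and amalgamate their Heegaard splittings, whereas the paper starts from a controlled path of curves (a flexipath) in $\mathcal{C}(\Sigma)$, builds $\Sigma\times[0,1]$ plus two-handles around it, fills the remaining boundary with handlebodies chosen far from the path in certain subsurface projections, and then realizes $S$ as a layered union of vertical annuli and horizontal pairs-of-pants \emph{transverse} to the Heegaard surface. The difference is not cosmetic, because your $\Sigma$ cannot have the required distance.

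An amalgamated Heegaard splitting is always weakly reducible, so $d(\Sigma)\leq 1<d$ for every choice of gluing and every $n$. Write $Y^\pm=H^\pm\cup_{\Sigma^\pm}C^\pm$ with $C^\pm$ the compression body whose negative boundary is $S$; since $S$ is incompressible in $Y^\pm$ and $\partial Y^\pm=S$, each $C^\pm$ carries at least one $1$-handle. Amalgamation is by construction the inverse of untelescoping: it comes packaged with a cocore disk of a $C^-$-handle on one side of $\Sigma$ and a cocore disk of a $C^+$-handle on the other side, arranged to be disjoint. Increasing $n$ is exactly what Lackenby's amalgamation theorem uses to control the Heegaard \emph{genus} of $Y^-\cup_{\phi^n}Y^+$ (so $\Sigma$ is unstabilized), but it cannot raise the \emph{distance} of the amalgamated surface past $1$, and self-amalgamation in the small-$g$ range has the identical defect. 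The Bounded Geodesic Image step also does not help this surface: the weak-reducing cocore disks miss $W$ entirely, so the projections of the disk sets to $\mathcal{C}(W)$ are not usefully defined, and in any case BGIT applied to a single subsurface forces a geodesic to pass near $\partial W$ once rather than to have length $\geq d$ --- the paper needs a chain of $d$ overlapping once-punctured-torus projections (one per odd-index vertex of the flexipath) precisely to convert projection bounds into a length bound. In short, any construction in which $\Sigma$ is an amalgamation across the incompressible $S$ is structurally incompatible with $d(\Sigma)=d\geq 2$; the paper sidesteps this by making $S$ transverse to, rather than parallel to, the Heegaard surface.
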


Scharlemann-Tomova~\cite{Scharlemann-Tomova} showed, moreover, that every other genus $g$ Heegaard surface $\Sigma'$ for $M$ with genus strictly less than $\frac{1}{2}d(\Sigma)$ is a stabilization of $\Sigma$, i.e.\ the result of attaching a number of unknotted handles to $\Sigma$. This is equivalent to the statement that any irreducible Heegaard splitting for $M$ is either isotopic to $\Sigma$ or has genus greater than or equal to $\frac{1}{2} d(\Sigma)$. In fact, in all previously known examples of manifolds with multiple irreducible Heegaard splittings, all of the Heegaard splittings have distance at most three. This begs the question: Can Scharlemann-Tomova's result be strengthened to show that if $d(\Sigma)$ is sufficiently high then $\Sigma$ is the only irreducible Heegaard splitting of $M$? The second result in this paper gives a negative answer to this question.

\begin{Thm}
\label{thm:main2}
For every pair of positive integers $d \geq 4$, $g \geq 2$ with $d$ even, there is a three-manifold $M$ with both a genus $g$ Heegaard surface $\Sigma$ such that $d(\Sigma) = d$ and a second Heegaard surface $S$ of genus $\frac{1}{2}d + g - 1$ such that $S$ is not a stabilization of $\Sigma$.
\end{Thm}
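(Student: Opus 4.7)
The plan is to leverage the manifold $M$ produced by Theorem \ref{thm:main1}, which contains both the distance-$d$, genus-$g$ Heegaard surface $\Sigma$ and a two-sided, separating, incompressible surface $F$ of genus $\frac{1}{2}d$. Write $M = M_1 \cup_F M_2$ for the decomposition along $F$. The second Heegaard surface $S$ will be obtained by amalgamating Heegaard splittings of $M_1$ and $M_2$ along $F$.

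For suitably chosen integers $g_1, g_2$ with $g_1 + g_2 = d + g - 1$, I would pick Heegaard splittings $(\Sigma_i, H_i^-, H_i^+)$ of $M_i$ of genus $g_i$, whose existence must be guaranteed by the construction of $M$. Amalgamating these two splittings along $F$ produces a Heegaard splitting $(S, J^-, J^+)$ of $M$ with $g(S) = g_1 + g_2 - g(F) = \frac{1}{2}d + g - 1$, as required. A natural configuration is $g_1 = \frac{1}{2}d$ and $g_2 = \frac{1}{2}d + g - 1$, which can be arranged by engineering $M$ so that $M_1$ is a compression body onto $F$ while $M_2$ carries all the complexity.

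The heart of the proof is to show that $S$ is not a stabilization of $\Sigma$. Since any stabilization is by definition a reducible Heegaard surface, it suffices to prove that $S$ is irreducible. For this I would arrange the construction of $M$ so that the chosen Heegaard splittings $\Sigma_i$ of the pieces $M_i$ have sufficiently high distance in the relevant arc-and-curve complex associated to $(M_i, F)$. Under such hypotheses, a standard amalgamation-irreducibility argument (in the spirit of results of Schultens, Bachman, or Souto) shows that no pair of compressing disks in $J^-$ and $J^+$ can be made disjoint across $F$, so $S$ is unstabilized and hence not a stabilization of \emph{any} Heegaard surface, in particular not of $\Sigma$.

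The main technical obstacle is executing the construction so that three conditions hold simultaneously: (i) $\Sigma$ has distance exactly $d$; (ii) $F$ is incompressible of genus $\frac{1}{2}d$ and separating; and (iii) the Heegaard splittings $\Sigma_i$ of the two pieces have sufficient distance relative to $F$ to force irreducibility of the amalgamation. I expect this to require a careful refinement of the construction underlying Theorem \ref{thm:main1}, most plausibly via additional high-power Dehn twists along well-chosen curves in $F$ to simultaneously pin down the distance of $\Sigma$ and force the irreducibility of $S$.
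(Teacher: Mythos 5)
Your proposal hinges on starting from the manifold $M$ of Theorem~\ref{thm:main1}, cutting along its incompressible surface $F$, and amalgamating Heegaard splittings of the two pieces. Unfortunately this runs directly into a point the paper itself flags: the manifolds used for Theorem~\ref{thm:main1} and Theorem~\ref{thm:main2} are \emph{not} the same, and whether a manifold that admits the Theorem~\ref{thm:main2}-type alternate Heegaard surface also contains a low-genus incompressible surface (and vice versa) is stated as an open problem. The reason is structural. The incompressible flat surface of Theorem~\ref{thm:main1} requires a \emph{strict} flexipath, in which every pair $\ell_{i-1},\ell_{i+1}$ bounding the once-punctured torus $F_i$ has large projection distance ($>6$). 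The alternate Heegaard surface of Theorem~\ref{thm:main2} requires an \emph{almost strict} flexipath, where for exactly one even index $j$ the loops $\ell_{j-1},\ell_{j+1}$ intersect in a single point; this low-distance ``toggle'' is exactly what makes the flipped square $Q$ exist and lets the two partial flat surfaces $S^-$ and $S^+$ be sewn together into a closed surface separating $M$ into two handlebodies. That same toggle destroys the projection-distance hypothesis needed for the incompressibility argument at level $j$, so you cannot simultaneously run both constructions in one manifold without new ideas.

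There is a second, independent gap: even granting a decomposition $M = M_1 \cup_F M_2$, you have no control over the Heegaard genera of $M_1$, $M_2$ and no argument that an amalgamated splitting of the stated genus exists, nor is the appeal to ``a standard amalgamation-irreducibility argument'' sufficiently specific. The known criteria for an amalgamation to be unstabilized (Bachman, Souto, etc.) require hypotheses on the pieces that are nontrivial to verify and are not obviously provided by the distance-$k$ filling construction. The paper avoids this entirely by building $S$ concretely as an index-one flat surface and then proving strong irreducibility directly via the Masur--Schleimer subsurface-projection bound (Lemma~\ref{lem:masurschleimer}) together with Lemma~\ref{lem:newsurfincomp2}, localizing the impossibility of disjoint compressions to the flipped square $Q$. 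To salvage your approach you would need to (i) replace the manifold of Theorem~\ref{thm:main1} by one built from an almost strict flexipath, and (ii) replace the appeal to generic amalgamation results by an explicit curve-complex argument at the toggle, which in effect is the paper's proof.
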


Note that while the incompressible surface constructed in Theorem~\ref{thm:main1} shows that Hartshorn's Theorem is sharp, the genus of the alternate Heegaard surface constructed in Theorem~\ref{thm:main2} is slightly higher than the bound $\frac{d}{2}$ given by Scharlemann-Tomova. It thus remains an open problem to determine whether the Scharlemann-Tomova's bound is sharp.

While the statements and proofs of these Theorems are completely topological, the intuition behind this approach comes from the hyperbolic geometry picture, and this is explained in Section~\ref{sect:intuition}. We define Hempel distance and subsurface projection distance in Section~\ref{sect:definitions}, then use these ideas in Section~\ref{sect:geodesics} to construct certain precisely controlled geodesics in the curve complex, similar to geodesics constructed by Ido-Jang-Kobayashi~\cite{IdoJangKobayashi}. We prove a number of results about subsurface projections of handlebody sets in Section~\ref{sect:handlebodyproj}, then combine these results with the previously constructed geodesics to construct a three-manifold $M$ with a Heegaard splitting $\Sigma$ with a specified value of $d(\Sigma)$ in Section~\ref{sect:splitting1}.

Next, we turn to the alternate surfaces. In Section~\ref{sect:flat}, we describe how to construct an embedded surface in a handlebody based on a path in the curve complex. In Section~\ref{sect:incompressible}, we show that appropriate conditions on the path used to construct such a surface will guarantee that it is incompressible, proving Theorem~\ref{thm:main1}. Then in Section~\ref{sect:splitting2}, we generalize the construction to produce a strongly irreducible Heegaard surface, proving Theorem~\ref{thm:main2}.

The constructions used to produce the incompressible surfaces and the alternate Heegaard surfaces below are very similar. In fact, the Heegaard surface is essentially built from two incompressible surfaces, attached together by a construction that is sometimes called a {\it toggle}. (This reinforces a common theme that strongly irreducible Heegaard surfaces are very close to incompressible.) However, the portion of the curve complex path that corresponds to this toggle has to satisfy a certain condition that makes it impossible to directly construct an incompressible surface in the same way. As a result, the three-manifolds that contain low genus incompressible surfaces and those that contain low genus alternate Heegaard surfaces appear to be different. So, it remains an open problem to determine whether the manifolds that admit alternate unstabilized Heegaard splittings also contain low genus (or any) incompressible surfaces.

Because the alternate surfaces constructed below correspond to geodesics in the curve complex between the disk sets for the original Heegaard surface $\Sigma$, it is conceivable that if there are multiple geodesics between the handlebody sets (or multiple paths that are somehow locally geodesic) then these would correspond to non-isotopic alternate Heegaard splittings. Note that delta-hyperbolicity of the curve complex~\cite{Masur-Minsky1} rules out the possibility of having distinct geodesics far away from each other, but does not rule out having multiple parallel geodesics within the delta hyperbolicity bounds.

Finally, we note that while the original Heegaard splittings in these examples have high Hempel distance, the alternate Heegaard surfaces all have Hempel distance two. This is suggestive of Schleimer's result~\cite{Schleimer} that for every three-manifold there is a constant $c$ such that every Heegaard surface of genus greater than $c$ has distance at most two. The only currently known examples of manifolds with more than one Heegaard surface of distance strictly greater than two are the Berge-Scharlemann manifolds~\cite{Berge-Scharlemann, Scharlemann}, each of which has two distinct, distance three Heegaard splittings. These examples can probably be generalized to higher genus, but it remains open whether a manifold can have two distinct Heegaard splittings, both with distance strictly greater than three.

I thank, Yeonhee Jang, Tsuyoshi Kobayashi, Saul Schleimer and Kazuto Takao for a number of valuable conversations that led to this work.

\section{Background and intuition}
\label{sect:intuition}

A major theme in research on three-dimensional manifolds over the last few decades has been the connections between three-dimensional topology, three-dimensional geometry and two-dimensional Teichmuller theory initiated by Thurston's geometrization program~\cite{Thurston}. A relatively simple illustration of these connections is the geometrization of surface bundles, in which a topological property on a three-maifold $M$ (that $M$ is a surface bundle, has no incompressible tori and is not a small Seifert fibered space) implies a geometric property (that $M$ has a complete hyperbolic metric) that can be understood in terms of Teichmuller theory on an embedded surface. (The proof~\cite{Otal} involves finding a fixed point of an automorphism of a space closely related to the Teichmuller space for a leaf in the bundle structure.)

In recent years, the connections between Teichmuller theory and three-dimensional geometry and topology have become much more precise, particularly in terms of Heegaard splittings. For example Namazi-Souto have constructed very accurate approximations of the hyperbolic structures of manifolds with certain types of Heegaard splittings~\cite{Namazi-Souto}. They show that if the Hempel distance for the Heegaard splitting (which will be defined below) is sufficiently high and satisfies certain other conditions then the hyperbolic metric on the manifold has relatively narrow cross sections parallel to the Heegaard surface, but a long diameter perpendicular to these cross sections. This is shown schematically in Figure~\ref{fig:NamaziSoutoManifold}.
\begin{figure}[htb]
  \begin{center}
  \includegraphics[width=4.5in]{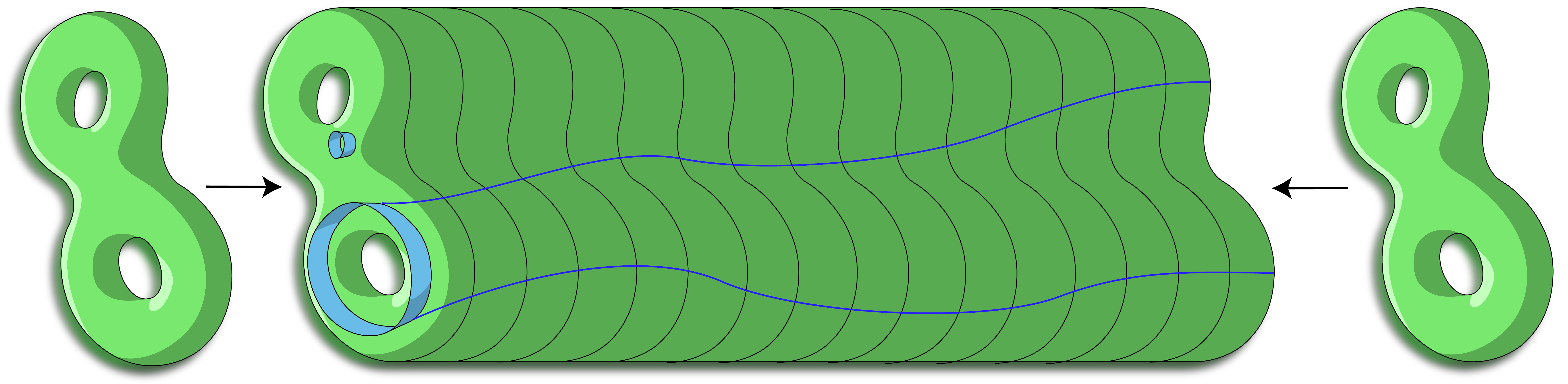}
  \caption{The geometric structure of a manifold with a high distance Heegaard splitting.}
  \label{fig:NamaziSoutoManifold}
  \end{center}
\end{figure}

The Hempel distance is defined in terms of the curve complex for the Heegaard surface, so Namazi-Souto's work strengthens the connection between Teichmuller theory and three-dimensional geometry. Meanwhile, a number of results have strengthened the connection between Teichmuller theory and three-dimensional topology by showing that given a Heegaard surface with sufficiently high Hempel distance, the ambient manifold has no incompressible tori (Hempel~\cite{Hempel}), no low genus incompressible surfaces (Hartshorn~\cite{Hartshorn}) or no low genus alternate Heegaard surfaces (Scharlemann-Tomova~\cite{Scharlemann-Tomova}). (These results all have roots in work of Kobayashi~\cite{Kobayashi} that predates Hempel's definition of distance.)

In this context, the direct connection between three-dimensional geometry and topology was developed last, with the (completely geometric) proof by Hass-Thompson-Thurston that certain high distance Heegaard splittings need to be stabilized many times before there is an isotopy that interchanges their complementary handlebodies~\cite{Hass-Thompson-Thurston}. They construct a hyperbolic structure on the ambient manifold $M$ similar to that of Namazi-Souto, then analyze how a harmonic representative of an embedded surface behaves in such a metric. Their proof suggests a very nice geometric interpretation of the previously mentioned topological results~\cite{Hartshorn, Scharlemann-Tomova}. Namely, one might expect these results to be true because any surface that is not parallel to the narrow cross sections of $M$ must stretch the long way across the geometric structure and thus have very large area, as suggested (again schematically) by the blue surface in Figure~\ref{fig:NamaziSoutoManifold}. A careful application of the Gauss-Bonnet Theorem turns this lower bound on area into a lower bound on genus.

This idea can be generalized using the notion of subsurface projection, which will be defined below. Roughly speaking, the geometric interpretation of subsurface distance in Minsky's approach to the ending lamination conjecture~\cite{Minsky} suggests that in a hyperbolic manifold, a high distance subsurface will correspond to a large volume concentrated around the subsurface. One can think of this as a sort of bubble in the hyperbolic metric as in the upper half of Figure~\ref{fig:Bubble}. This is very difficult to make precise geometrically, but the topological interpretation is fairly straightforward: Because this bubble is long in one direction, but has narrow cross sections parallel to the high distance subsurface, any low genus Heegaard surface should be forced to intersect the bubble along the narrow cross section, as indicated by the curve in Figure~\ref{fig:Bubble}. Based on this idea, Yair Minsky, Yoav Moriah and the author showed that if a Heegaard splitting has a high distance subsurface then every other low genus Heegaard surface must have a subsurface parallel to this high distance subsurface (though not necessarily to the rest of the Heegaard surface)~\cite{J-Minsky-Moriah}.
\begin{figure}[htb]
  \begin{center}
  \includegraphics[width=3.5in]{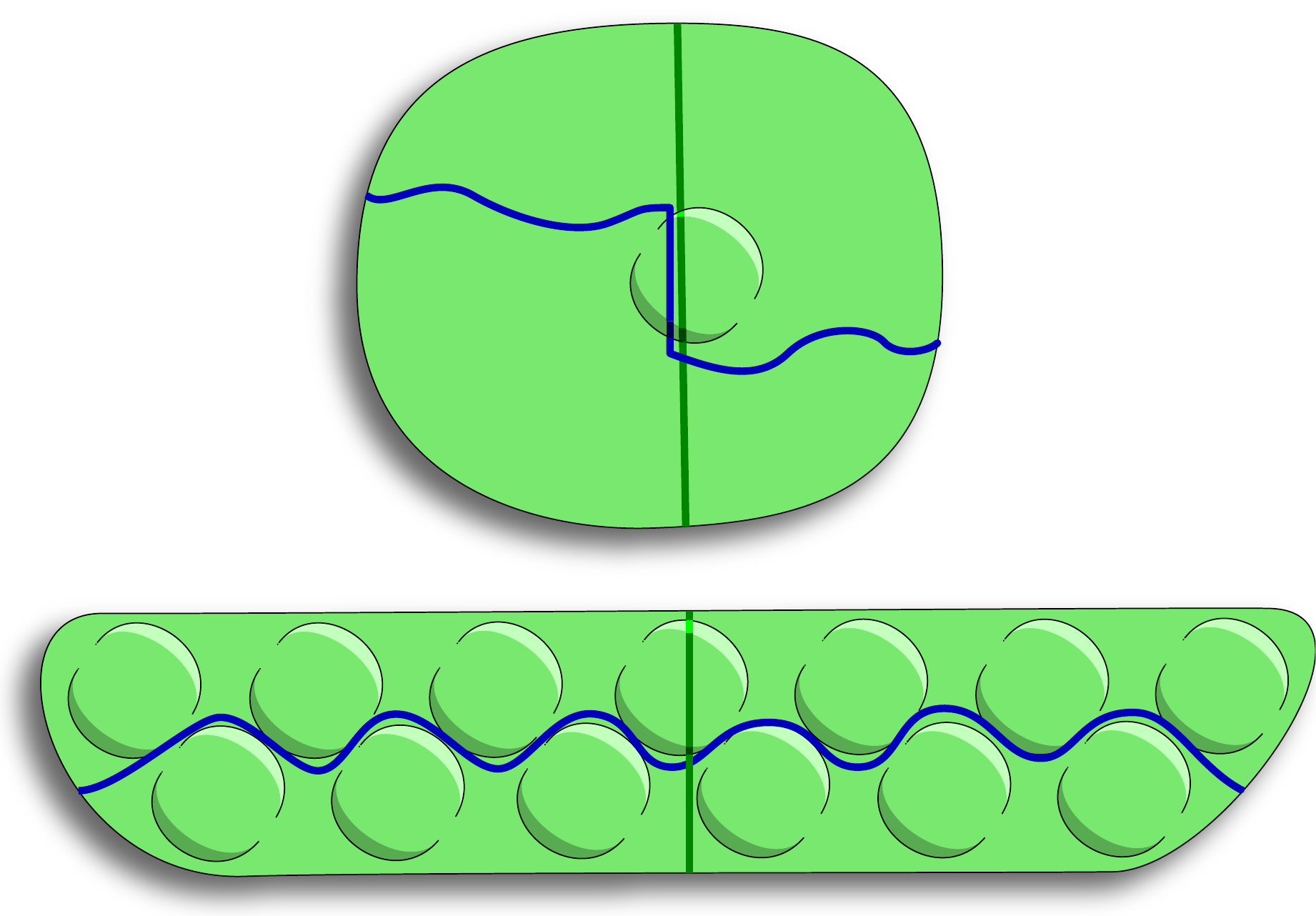}
  \caption{High distance subsurfaces imply ``bubbles" of high volume in the hyperbolic metric.}
  \label{fig:Bubble}
  \end{center}
\end{figure}

In the present paper, we extend this idea by constructing a Heegaard splitting with a large number of distinct (and overlapping) high distance subsurfaces. Our construction is very similar to the construction for extending geodesics described recently by Birman-Menasco~\cite{Birman-Menasco} and Ido-Jang-Kobayashi~\cite{IdoJangKobayashi} and closely relate to methods for constructing Heegaard splittings with specified distances introduced by Qiu-Zou-Guo~\cite{QiuZouGuo}. All of these methods rely heavily on ideas developed in Saul Scleimer's notes on the curve complex~\cite{SchleimerNotes} and Masur-Schleimer's work on holes in the curve complex~\cite{Masur-Schleimer}.

One can visualize the examples that we construct below geometrically as a manifold consisting of two parallel rows of hyperbolic bubbles as in the lower half of Figure~\ref{fig:Bubble}. The low genus Heegaard surface containing these subsurfaces cuts the manifold in half the short way, as indicated by the vertical curve in the Figure. The alternate incompressible or Heegaard surface can be thought of as weaving between these bubbles the long way across the manifold, as suggested by the horizontal curve in Figure~\ref{fig:Bubble}. The geometric intuition suggests that this second Heegaard surface should be caught between the bubbles since any isotopy from it to the original Heegaard surface would have to cross one of the bubbles the long way. We prove (completely topologically) that this is in fact the case: Given the appropriate conditions on the original Heegaard surface $\Sigma$, the alternate surface $S$ will be incompressible or strongly irreducible, and thus not isotopic to a stabilization of $\Sigma$.

\section{Distance and subsurface projection}
\label{sect:definitions}

The \textit{curve complex} for a compact, connected, orientable surface $\Sigma$ is the simplicial complex $\mathcal{C}(\Sigma)$ in which each vertex represents an isotopy class of essential simple closed curves in $\Sigma$ and each simplex represents a set of isotopy classes with pairwise disjoint representatives. In particular, if two isotopy classes are represented by disjoint loops then there is an edge between the corresponding vertices of $\mathcal{C}(\Sigma)$.

We will endow the vertex set of $\mathcal{C}(\Sigma)$ with the structure of a metric space by thinking of each edge in $\mathcal{C}(\Sigma)$ as having length one. So, for vertices $v,w \in \mathcal{C}(\Sigma)$, we define the distance $d(v,w)$ to be the number of edges in the shortest path from $v$ to $w$ in $\mathcal{C}(\Sigma)$. It is a relatively simple exercise to show that $\mathcal{C}(\Sigma)$ is connected (so $d(v,w)$ is a finite integer for each pair $v,w$) and a much harder exercise to show that $\mathcal{C}(\Sigma)$ has infinite diameter~\cite{Masur-Minsky1, SchleimerNotes}.

If $\Sigma$ is the boundary of a handlebody $H$ then we define the \textit{handlebody set} of $H$ to be the set $\mathcal{H} \subset \mathcal{C}(\Sigma)$ consisting of all vertices whose representatives bound disks in $H$. Given a Heegaard splitting $(\Sigma, H^-, H^+)$, the surface $\Sigma$ is the boundary of two different handlebodies, so this defines a handlebody set $\mathcal{H}^-$ for $H^-$ and a second set $\mathcal{H}^+$ for $H^+$, as in Figure~\ref{fig:distance}. 

\begin{Def}
The \textit{(Hempel) distance} of $(\Sigma, H^-, H^+)$ is
$$d(\Sigma) = d(\mathcal{H}^-, \mathcal{H}^+) = \min\{d(v,w)\ |\ v \in \mathcal{H}^-, w \in \mathcal{H}^+\}.$$
\end{Def}
\begin{figure}[htb]
  \begin{center}
  \includegraphics[width=4.5in]{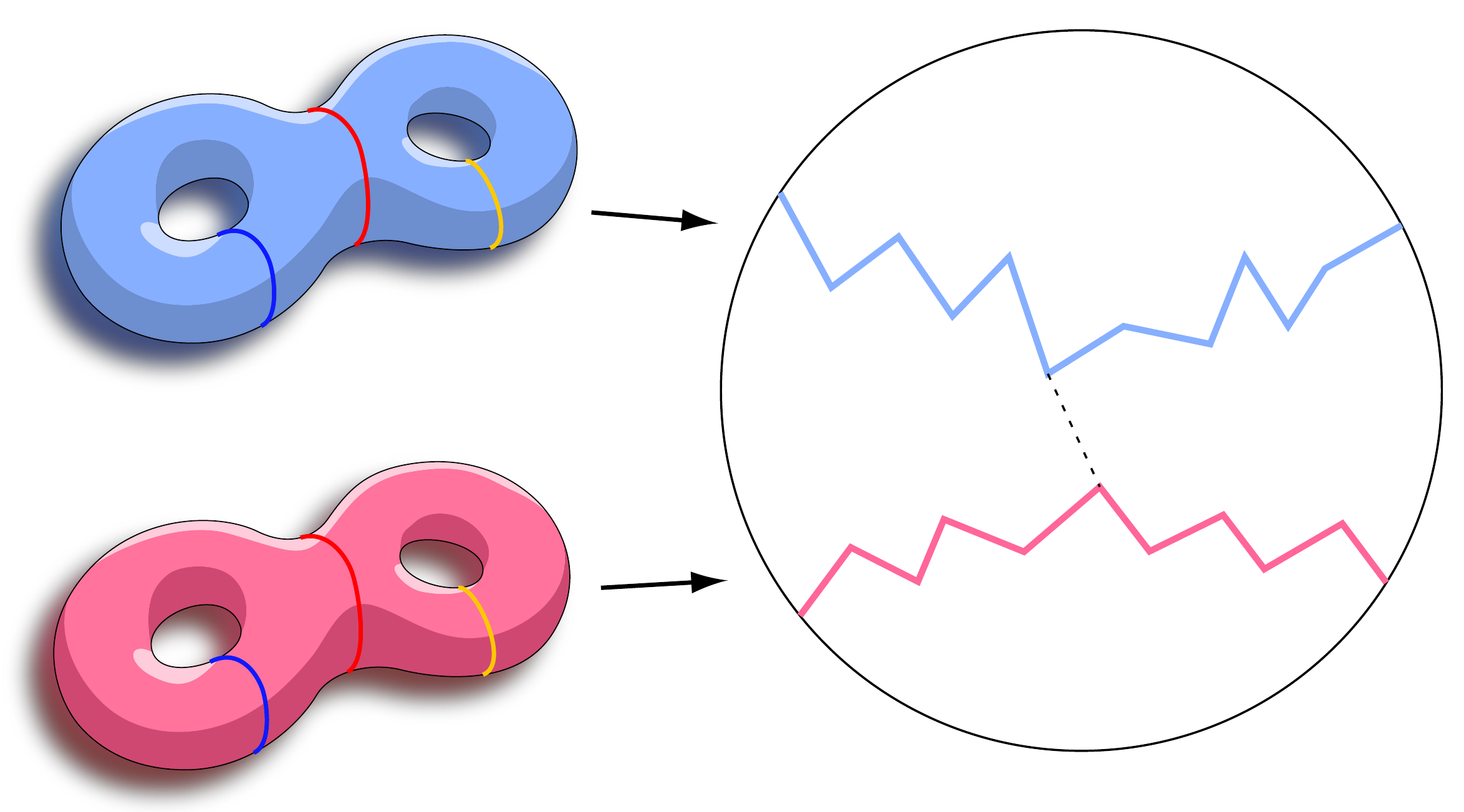}
  \put(-250,110){$H^+$}
  \put(-250,15){$H^-$}
  \put(-90,112){$\mathcal{H}^+$}
  \put(-90,50){$\mathcal{H}^-$}
  \put(-85,82){$d(\Sigma)$}
  \put(-30,25){$\mathcal{C}(\Sigma)$}
  \caption{Hempel distance $d(\Sigma)$ of a Heegaard splitting $(\Sigma, H^-, H^+)$.}
  \label{fig:distance}
  \end{center}
\end{figure}

For a compact surface $F$ with boundary, we define the \textit{arc and curve complex} $\mathcal{AC}(F)$ similarly, except that each vertex represents either an isotopy class of essential (including not boundary parallel) simple closed curves, or an isotopy class of essential properly embedded arcs. In this paper, we will consider the arc and curve complexes for different subsurfaces of a closed surface $\Sigma$. 

In particular, we will say that a subsurface $F$ of $\Sigma$ is \textit{essential} if each loop of $\partial F$ is either essential in $\Sigma$ or contained in a component of $\partial \Sigma$. For each essential loop $\ell \subset \Sigma$, we will define the \textit{subsurface projection} $p_F(\ell)$ to be the set of vertices in $\mathcal{AC}(F)$ corresponding to components of $\ell \cap F$ after $\ell$ has been isotoped to intersect $\partial F$ minimally. The projections of two loops into a genus-one subsurface are shown in Figure~\ref{fig:subsurfproj}.

Note that the projection of a loop into $\mathcal{AC}(F)$ is a (possibly empty) set of vertices rather than a single vertex. However, because $\ell$ is embedded, the components of $\ell \cap F$ will be pairwise disjoint, so $p_F(\ell)$ will span a simplex in $\mathcal{AC}(F)$ (if $p_F(\ell)$ is not the empty set).
\begin{figure}[htb]
  \begin{center}
  \includegraphics[width=4.5in]{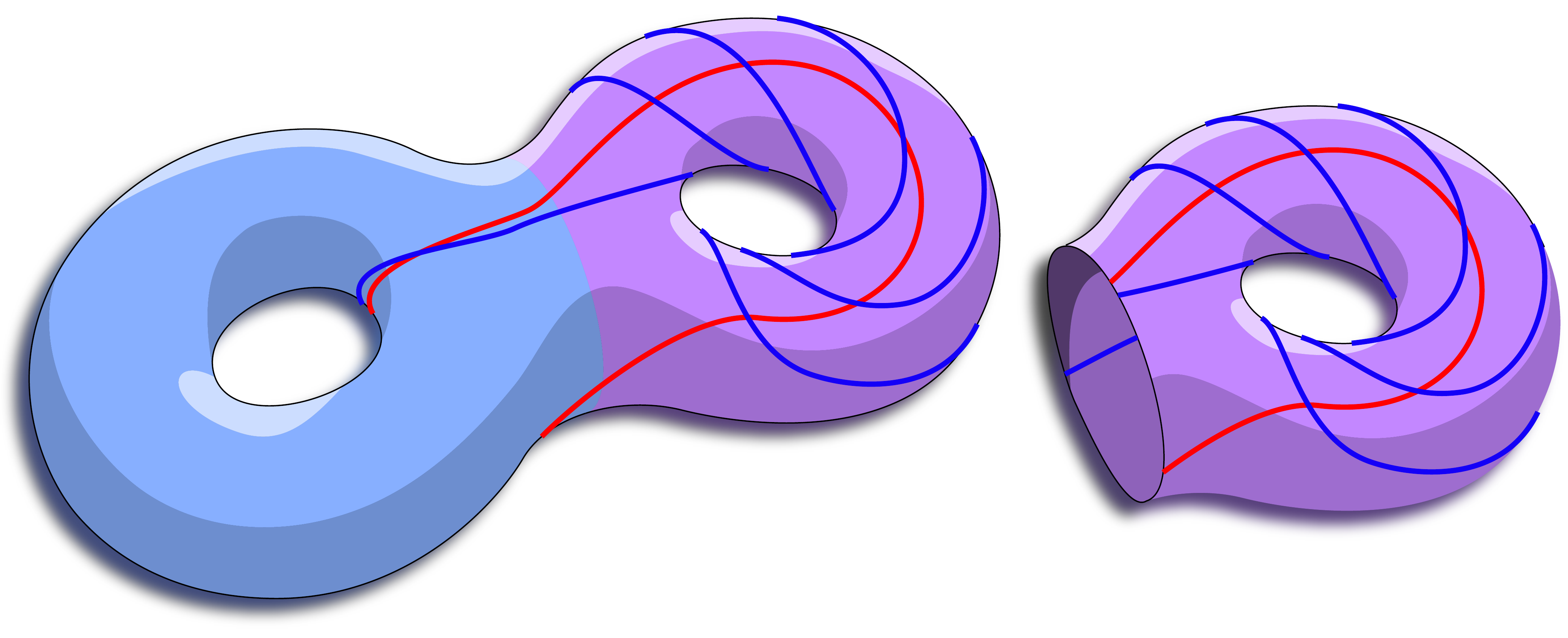}
  \put(-200,30){$\Sigma$}
  \put(-50,10){$F$}
  \caption{Subsurface projections of loops.}
  \label{fig:subsurfproj}
  \end{center}
\end{figure}

Given sets $A, B \subset \mathcal{C}(\Sigma)$, define $d_F(A,B)$ to be the distance in $\mathcal{AC}(F)$ from $p(A)$ (the union of the projections of vertices in $A$) to $p(B)$. This is only well defined if both projections are not empty, so we will define $d_F(A,B) = \infty$ otherwise. The key to our use of subsurface projections will be the following Lemma:

\begin{Lem}
\label{lem:mustmissF}
If $d_F(A,B) = m$ then every path in $\mathcal{C}(\Sigma)$ of length $n < m$ from $A$ to $B$ contains a loop disjoint from $F$.
\end{Lem}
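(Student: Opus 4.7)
The plan is to prove the contrapositive: I would show that any path $v_0, v_1, \ldots, v_n$ in $\mathcal{C}(\Sigma)$ from $A$ to $B$ in which \emph{every} vertex has non-empty projection to $F$ must have length $n \geq m = d_F(A,B)$. The engine is a coarse Lipschitz property of subsurface projection: whenever $v, w \in \mathcal{C}(\Sigma)$ are joined by an edge of $\mathcal{C}(\Sigma)$ and both project non-trivially to $F$, then $d_F(v, w) \leq 1$ in $\mathcal{AC}(F)$.

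First I would verify this Lipschitz inequality. Choose disjoint representatives of $v$ and $w$ in $\Sigma$ and simultaneously isotope them into minimal position with $\partial F$; a standard innermost-bigon argument (removing bigons between one curve and $\partial F$, checking that this does not create intersections with the other curve) lets us do this while keeping $v$ and $w$ disjoint. Every component of $v \cap F$ is then disjoint from every component of $w \cap F$, so any vertex of $p_F(v)$ and any vertex of $p_F(w)$ either coincide or span an edge in $\mathcal{AC}(F)$. Taking the minimum over all such pairs gives $d_F(v, w) \leq 1$.

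Given a path $v_0, \ldots, v_n$ with $v_0 \in A$, $v_n \in B$ and each $v_i$ projecting non-trivially to $F$, I would apply this inequality to each consecutive pair and concatenate the resulting hops in $\mathcal{AC}(F)$. The triangle inequality yields
\[
d_F(A, B) \;\leq\; d_{\mathcal{AC}(F)}\bigl(p_F(v_0), p_F(v_n)\bigr) \;\leq\; \sum_{i=0}^{n-1} d_F(v_i, v_{i+1}) \;\leq\; n.
\]
Combined with the hypothesis $n < m = d_F(A,B)$, this is a contradiction, so some $v_i$ must have empty projection to $F$. By the definition of $p_F$, this vertex admits a representative disjoint from $F$, which is the conclusion of the lemma.

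The only mildly delicate point is the simultaneous minimal-position step in the Lipschitz inequality; once that is in hand, the lemma is essentially the triangle inequality in $\mathcal{AC}(F)$, and I do not anticipate any substantive obstacle.
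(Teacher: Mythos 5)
Your proposal follows essentially the same route as the paper: prove the contrapositive by extracting, from a path $v_0,\dots,v_n$ whose every vertex meets $F$ essentially, a path of the same length in $\mathcal{AC}(F)$, using the disjointness of consecutive curves. The paper's proof and yours both rest on the same key observation.

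One step as written is formally off, though, and is worth flagging. After establishing the strong fact that \emph{every} component of $v \cap F$ is disjoint from \emph{every} component of $w \cap F$, you deliberately weaken it to $d_F(v,w) \leq 1$ (a set-to-set minimum distance) and then invoke ``the triangle inequality'' to write $d_{\mathcal{AC}(F)}(p_F(v_0),p_F(v_n)) \leq \sum d_F(v_i,v_{i+1})$. Minimum set-to-set distances do not satisfy the triangle inequality in general: the vertex of $p_F(v_{i+1})$ realizing $d_F(v_i,v_{i+1})$ need not be the one realizing $d_F(v_{i+1},v_{i+2})$, and a priori switching inside $p_F(v_{i+1})$ could cost arbitrarily much. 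Here it does not, precisely because of the stronger statement you proved and then discarded: since \emph{any} vertex of $p_F(v_i)$ is at distance at most one from \emph{any} vertex of $p_F(v_{i+1})$, you can choose one vertex $u_i \in p_F(v_i)$ arbitrarily for each $i$ and the sequence $u_0,\dots,u_n$ is genuinely a path in $\mathcal{AC}(F)$, so $d_F(A,B) \leq d_{\mathcal{AC}(F)}(u_0,u_n) \leq n$. This is exactly what the paper does (it picks a vertex $v_i \in p_F(\ell_i)$ for each $i$ and notes consecutive ones span an edge). So the fix is already contained in your own argument; you just need to keep the strong Lipschitz bound rather than throwing it away before concatenating.
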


\begin{proof}
First note that if either projection is empty then either every loop in $A$ or every loop in $B$ is disjoint from $F$, so every path from $A$ to $B$ must start or end in a loop disjoint from $F$ and the proof is complete.

Otherwise, consider $A, B \subset \mathcal{C}(\Sigma)$ such that $d_F(A,B) = m > n$ is finite. Let $\ell_0,\dots,\ell_n$ be a path in $\mathcal{C}(\Sigma)$ from $A$ to $B$ and assume for contradiction that no $\ell_i$ can be isotoped out of $F$. Then $p_F(\ell_i)$ contains at least one vertex $v_i \in \mathcal{AC}(F)$. Because the loops $\ell_i$, $\ell_{i+1}$ are disjoint, their intersections with $F$ are disjoint, so $v_i$, $v_{i+1}$ either are equal or bound an edge in $\mathcal{AC}(F)$. The sequence of vertices $v_0,\dots,v_n$ thus defines a length-$n$ path from $p_F(A)$ to $p_F(B)$, contradicting the assumption that $d_F(A,B) = m > n$. This contradiction implies that some loop $\ell_i$ must be disjoint from $F$.
\end{proof}

\section{Geodesics and handlebody sets}
\label{sect:geodesics}

We will use Lemma~\ref{lem:mustmissF} to construct flexible geodesics similar to the ones constructed by Ido-Jang-Kobayashi~\cite{IdoJangKobayashi}. Let $k > 0$ be a positive integer whose role will be explained below and let $d \geq 2$ be any even integer, the length of the desired path. Let $\Sigma$ be a compact, connected, closed, orientable, genus $g$ surface and let $\ell_0$ be a separating, essential, simple closed curve in $\Sigma$. Moreover, assume that the closure of one component of the complement $\Sigma \setminus \ell_0$ is a once-punctured torus $F_0 \subset \Sigma$.

Let $\ell_1$ be a non-separating, essential, simple closed curve in $F_0$, as in Figure~\ref{fig:flexipath}. Let $F_1$ be the complement in $\Sigma$ of an open regular neighborhood of $\ell_1$. This will be a genus $g-1$ subsurface with two boundary loops parallel to $\ell_1$. Then the loop $\ell_0$ defines a vertex in the arc and curve complex $\mathcal{AC}(F_1)$. Since this complex has infinite diameter, we can choose an arc $\alpha_2 \subset F_1$, with one endpoint in each boundary loop of $F_1$, such that $d_{F_1}(\ell_0, \alpha_2) > k + d$. A closed regular neighborhood $F_2 \subset \Sigma$ of the union $\ell_1 \cup \alpha_2$ will be a once-punctured torus and we will define $\ell_2$ to be the boundary of $F_2$, as in Figure~\ref{fig:flexipath}. Similarly, $\ell_1$ is an essential loop in $F_2$ and the arc and curve complex for $F_2$ has infinite diameter, so we can choose a loop $\ell_3 \subset F_2$ such that $d_{F_2} (\ell_1, \ell_3)$ is arbitrarily high. For any such loop $\ell_3$ that is not isotopic to $\ell_1$, the projection of $\ell_3$ to $F_1$ is (multiple copies of) the arc $\alpha_4$, so $d_{F_1}(\ell_0, \ell_3) > k + d$.
\begin{figure}[htb]
  \begin{center}
  \includegraphics[width=3.5in]{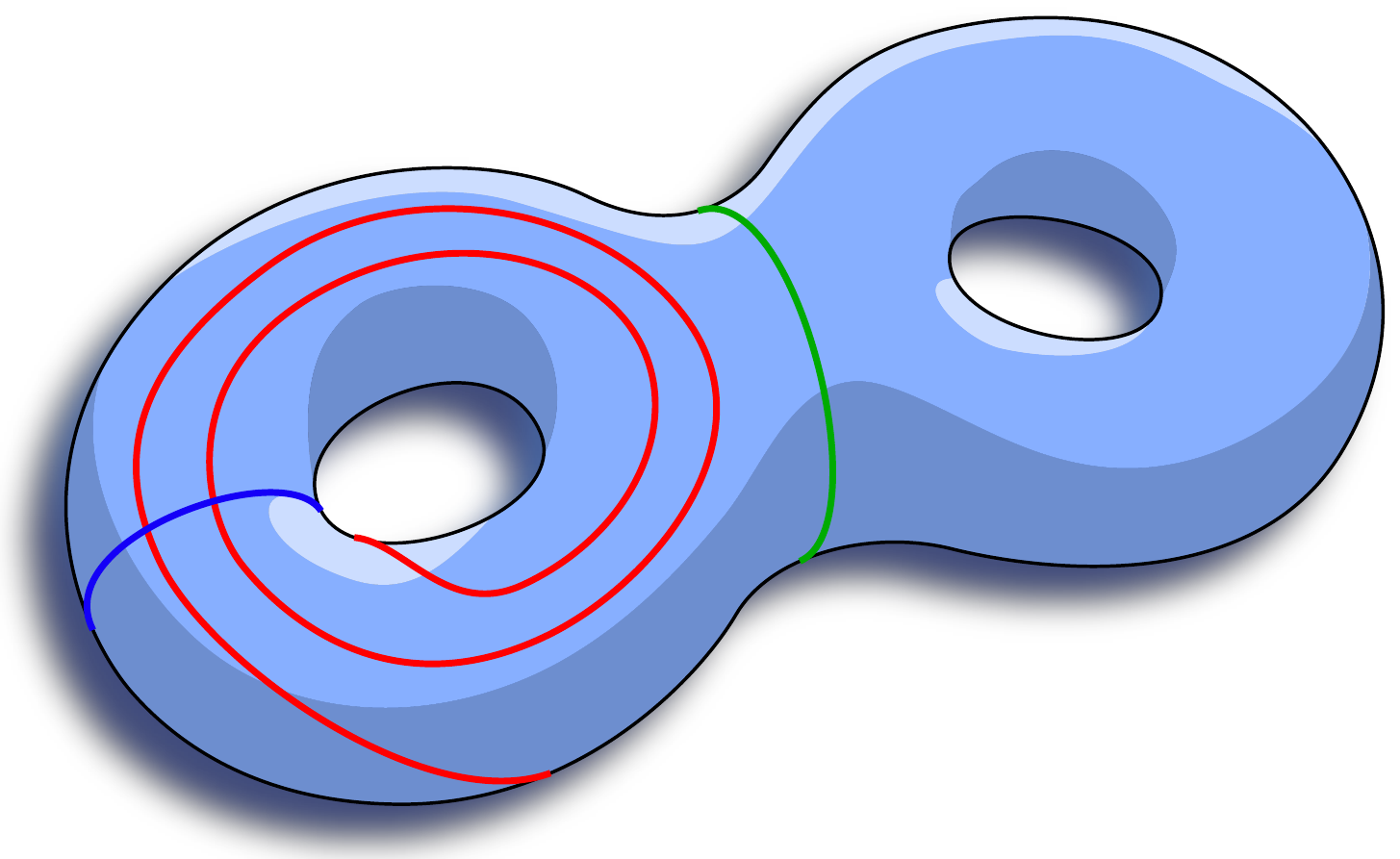}
  \put(-247,35){$\ell_1$}
  \put(-105,40){$\ell_2$}
  \put(-150,5){$\ell_3$}
  \caption{Loops forming part of a flexipath.}
  \label{fig:flexipath}
  \end{center}
\end{figure}

Let $F_3$ be the complement in $\Sigma$ of a regular open neighborhood of $\ell_3$. Like $F_1$, this will be a genus $g-1$ subsurface with two boundary loops parallel to $\ell_3$. Note that the projection of $\ell_0$ into $\mathcal{AC}(F_3)$ is a simplex (a collection of pairwise-disjoint arcs), while each of $\ell_1$ and $\ell_2$ defines a single vertex in $F_3$. Since $\mathcal{AC}(F_3)$ has infinite diameter, we can choose a properly embedded, non-separating arc $\alpha_4 \subset F_3$ such that $d_{F_3}(\ell_1 \cup \ell_2 \cup \ell_3, \ell_4) > k + d + 1$. Let $\ell_4$ be the boundary of a closed regular neighborhood $F_4$ of $\ell_3 \cup \alpha_4$. Then by the triangle inequality, $d_{F_3}(\ell_1 \cup \ell_2 \cup \ell_3, \alpha_4) > k + d$. We can then repeat the process by choosing a loop $\ell_5 \subset F_4$ so that $d_{F_4}(\ell_3, \ell_5)$ is arbitrary, and so on.

\begin{Lem}
\label{lem:flexipath1}
For every pair of integers $k > 0$, $d \geq 2$ with $d$ even, there is a path of loops $\ell_0,\ldots,\ell_d$ such that
\begin{itemize}
\item For each even $i$, $\ell_i$ bounds a once-punctured torus containing $\ell_{i-1}$ (if $i > 0$) and $\ell_{i+1}$ (if $i < d$), both of which are non-separating in this torus.
\item For each pair of positive odd integers $j < i \leq d-3$, we have $d_{F_i}(\ell_j, \ell_{i+2}) > k + d$.
\item We have $d_{F_1}(\ell_0, \ell_3) > k + d$ and for every $j < d-1$, we have $d_{F_{d-1}}(\ell_j, \ell_d) > k + d$.
\end{itemize}
\end{Lem}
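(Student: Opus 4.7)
The plan is to carry out the inductive construction already sketched in the paragraphs preceding the lemma, invoking the infinite diameter of each arc-and-curve complex $\mathcal{AC}(F_i)$ at every step to choose the new arc or loop to be far from all previously chosen ones. The payoff for the distance bounds is a single \emph{projection-collapsing} observation: for each odd $i$, the intersection $F_{i+1}\cap F_i$ deformation-retracts onto the arc $\alpha_{i+1}$ used to build the once-punctured torus $F_{i+1}=N(\ell_i\cup\alpha_{i+1})$, so any loop $\ell\subset F_{i+1}$ that is not parallel to $\ell_i$ satisfies $\ell\cap F_i$ equals parallel copies of $\alpha_{i+1}$, giving $p_{F_i}(\ell)=\{[\alpha_{i+1}]\}\in\mathcal{AC}(F_i)$.

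First I would establish the base case: fix a separating $\ell_0\subset\Sigma$ with one complementary component a once-punctured torus $F_0$, pick a non-separating $\ell_1\subset F_0$, and set $F_1=\Sigma\setminus N(\ell_1)$. The inductive step alternates parity. At an even stage $i\geq 2$, given all prior data (in particular $F_{i-1}$, a genus $g-1$ surface with two boundary loops parallel to $\ell_{i-1}$), I invoke infinite diameter of $\mathcal{AC}(F_{i-1})$ to choose a non-separating arc $\alpha_i\subset F_{i-1}$ joining its two boundary components with
\[
d_{F_{i-1}}(\ell_j,\alpha_i)>k+d
\]
for every prior index $j$ whose projection into $F_{i-1}$ is nonempty (a finite condition). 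Then $F_i=N(\ell_{i-1}\cup\alpha_i)$ is a once-punctured torus and $\ell_i=\partial F_i$. At an odd stage $i\geq 3$, I pick any non-separating $\ell_i\subset F_{i-1}$ not parallel to $\ell_{i-2}$ and set $F_i=\Sigma\setminus N(\ell_i)$; no metric condition on $\ell_i$ is needed for the three bullets.

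The first bullet is immediate from the construction. For the second bullet, fix odd $i$ and odd $j<i\leq d-3$. The loop $\ell_{i+2}$ lies in $F_{i+1}$, is non-separating (so not boundary-parallel), and by construction is not parallel to $\ell_i$. The collapsing observation gives $p_{F_i}(\ell_{i+2})=\{[\alpha_{i+1}]\}$, so
\[
d_{F_i}(\ell_j,\ell_{i+2})=d_{F_i}(\ell_j,\alpha_{i+1})>k+d
\]
by the choice of $\alpha_{i+1}$. The case $d_{F_1}(\ell_0,\ell_3)>k+d$ of the third bullet is identical with $(i,j)=(1,0)$. For $d_{F_{d-1}}(\ell_j,\ell_d)>k+d$ (with $d$ even, $d-1$ odd, $j<d-1$), observe that $\ell_d=\partial F_d=\partial N(\ell_{d-1}\cup\alpha_d)$ meets $F_{d-1}$ in two arcs parallel to $\alpha_d$, so $p_{F_{d-1}}(\ell_d)=\{[\alpha_d]\}$ and the same transfer applies.

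The main obstacle is the bookkeeping and the justification of projection-collapsing in each case: at every even stage the arc $\alpha_i$ must be chosen far from the finite set of prior loops whose projections to $F_{i-1}$ are nonempty, and one must verify for each $j$ appearing in the bullets that $p_{F_i}(\ell_{i+2})$ really is (parallel copies of) the single vertex $[\alpha_{i+1}]$. The latter is routine once one notes that $F_{i+1}\setminus N(\ell_i)$ is an annulus neighborhood of $\alpha_{i+1}$ in $F_i$; the edge case $j=0$ (where $\ell_0$ is separating, so its projection is potentially a simplex rather than a single vertex) causes no difficulty since the bound only requires distance from the set $p_{F_i}(\ell_0)$ to the vertex $[\alpha_{i+1}]$.
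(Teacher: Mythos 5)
Your approach is essentially the paper's: build the loops and arcs inductively, at each even stage choosing the arc $\alpha_i$ in $\mathcal{AC}(F_{i-1})$ far from the projections of all previously constructed loops, and then harvest the second and third bullets from the projection--collapsing observation. That part is correct, and your treatment of the second bullet and of $d_{F_1}(\ell_0,\ell_3)$ is fine.

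There is, however, a genuine (if small) error in the last case of the third bullet. You claim that $\ell_d=\partial N(\ell_{d-1}\cup\alpha_d)$ meets $F_{d-1}$ in two arcs parallel to $\alpha_d$, so that $p_{F_{d-1}}(\ell_d)=\{[\alpha_d]\}$. But $\ell_d$ is the boundary of a regular neighborhood of $\ell_{d-1}\cup\alpha_d$, hence is \emph{disjoint} from $\ell_{d-1}$; in minimal position with $\partial F_{d-1}$ it lies entirely inside $F_{d-1}$ and projects to $\mathcal{AC}(F_{d-1})$ as a loop, not as arcs. (The ``two arcs'' picture comes from a non-minimal representative.) The collapsing observation applies to loops in $F_{i+1}$ that genuinely cross $\ell_i$ --- such as the odd-indexed $\ell_{i+2}$ --- but not to the boundary loop $\ell_d$ itself. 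The correct estimate is that $\ell_d$ is disjoint from $\alpha_d$, so $d_{F_{d-1}}(\ell_d,\alpha_d)\le 1$; with your choice of $\alpha_d$ at distance $>k+d$ this only gives $d_{F_{d-1}}(\ell_j,\ell_d)>k+d-1$, one short of what the lemma requires. The fix is exactly what the paper does in its construction sketch: choose each arc at distance strictly greater than $k+d+1$ from the prior projections, and the bound on $\ell_d$ then follows by the triangle inequality. Aside from this off-by-one slip (and the need, which you note, to confirm that arcs joining the two boundary circles of $F_{i-1}$ form an unbounded set in $\mathcal{AC}(F_{i-1})$), the argument matches the paper's.
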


Note that the second condition in this Lemma are rather asymmetric: It only controls the distances projections of two loops $\ell_j$, $\ell_{i+2}$ in the subsurface $F_i$. We would like to control their distances in any subsurface, and we can do this with a little more work.

\begin{Lem}
\label{lem:flexipath2}
Let $\ell_0,\ldots,\ell_d$ be a path as in Lemma~\ref{lem:flexipath1}. Then for every triple of integers $h < i < j$ such that $i$ is odd, we will have $d_{F_i}(\ell_h, \ell_j) > k$.
\end{Lem}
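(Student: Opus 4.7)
The plan is to reduce an arbitrary distance $d_{F_i}(\ell_h, \ell_j)$ to the asymmetric bounds provided by Lemma~\ref{lem:flexipath1} via triangle inequalities in $\mathcal{AC}(F_i)$. The two elementary ingredients I will lean on are: first, for every $m \neq i$ the curve $\ell_m$ is not isotopic to $\ell_i$ and hence has a nonempty subsurface projection to $F_i$; and second, if $m, m+1 \neq i$ then $\ell_m$ and $\ell_{m+1}$ are disjoint in $\Sigma$, so their projections to $F_i$ are disjoint and $d_{F_i}(\ell_m, \ell_{m+1}) \leq 1$. Iterating the second fact gives the walk bound: for any $r \leq s$ with $i \notin \{r, r+1, \ldots, s\}$, we have $d_{F_i}(\ell_r, \ell_s) \leq s - r$.

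For the endpoint cases $i = 1$ or $i = d-1$ (which also absorb the degenerate value $d = 2$), the bound follows essentially from condition 3 of Lemma~\ref{lem:flexipath1}. If $i = d - 1$ then $j = d$ is forced, and $d_{F_{d-1}}(\ell_h, \ell_d) > k + d$ is exactly condition 3. If $i = 1$ and $d \geq 4$, then $h = 0$ is forced; condition 3 gives $d_{F_1}(\ell_0, \ell_3) > k + d$, while the walk bound gives $d_{F_1}(\ell_3, \ell_j) \leq |j - 3| \leq d - 3$, so the triangle inequality produces $d_{F_1}(\ell_0, \ell_j) > k + 3 > k$.

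For the interior case $3 \leq i \leq d - 3$, I would first shift $h$ to a nearby positive odd index $h^{\ast}$: take $h^{\ast} = h$ if $h$ is odd, $h^{\ast} = h - 1$ if $h$ is positive even, and $h^{\ast} = 1$ if $h = 0$. In each sub-case $d_{F_i}(\ell_h, \ell_{h^{\ast}}) \leq 1$ by the walk bound, and condition 2 applied to $(h^{\ast}, i+2)$ gives $d_{F_i}(\ell_{h^{\ast}}, \ell_{i+2}) > k + d$, so $d_{F_i}(\ell_h, \ell_{i+2}) > k + d - 1$. On the right, either $j = i + 1$ and $d_{F_i}(\ell_j, \ell_{i+2}) \leq 1$, or $j \geq i + 2$ and $d_{F_i}(\ell_{i+2}, \ell_j) \leq d - i - 2$. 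A final triangle inequality yields $d_{F_i}(\ell_h, \ell_j) > (k + d - 1) - (d - i - 2) = k + i + 1 > k$. The main subtlety is purely book-keeping around the asymmetry of condition 2, which only directly handles pairs $(\ell_{j'}, \ell_{i+2})$ with $j'$ a positive odd integer below $i$; this is precisely what forces the explicit shift to $h^{\ast}$ on the left, and no deeper curve-complex geometry seems to be needed beyond the walk bound.
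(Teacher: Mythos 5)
Your proof is correct and follows essentially the same approach as the paper's: both use the walk bound coming from consecutive disjoint loops together with triangle inequalities in $\mathcal{AC}(F_i)$ to reduce the general claim to the asymmetric bounds of Lemma~\ref{lem:flexipath1}. Your explicit shift from $h$ to a nearby positive odd $h^{\ast}$ is in fact a bit more careful than the paper's argument, which invokes $d_{F_i}(\ell_h, \ell_{i+2}) > k + d$ ``by assumption'' even when $h$ is even or zero, a situation not directly covered by condition 2 of Lemma~\ref{lem:flexipath1}.
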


\begin{proof}
Assume the path $\ell_0,\ldots,\ell_d$ satisfies the conditions described in Lemma~\ref{lem:flexipath1}. Given $h < i < j$ with $i$ odd, note that if $j = i+2$ or if $j = d$ and $i = d-1$ then the condition is satisfied by assumption. Otherwise, we have two cases to consider. 

First, if $j = i + 1$ then $\ell_i$ is the separating loop that bounds a regular neighborhood of $\ell_i \cup \ell_{j+1}$. The loops $\ell_{j+1}$ and $\ell_j$ are disjoint by definition so their projections into $\mathcal{AC}(F_i)$ are distance at most one. By assumption, $d_{F_i}(\ell_h, \ell_{j+1}) = d_{F_i}(\ell_h, \ell_{i+2}) > d + k$ so we have $d_{F_i}(\ell_h, \ell_{j+1}) > d + k - 1 > k$ (since $d \geq 2$).

Otherwise, if $j > i + 2$ then note that for each loop $\ell_{j'}$ with $j' > i$, if $i'$ is the largest odd number strictly less than $j'$ then $d_{F_{i'}}(\ell_i, \ell_{j'})$ is positive, so $\ell_i$ and $\ell_{j'}$ are not isotopic. The complement in $\Sigma$ of $\ell_i$ is an annular neighborhood of $\ell_i$ so every loop in $\Sigma$ that is not isotopic to $\ell_i$ determines a vertex of $\mathcal{AC}(F_i)$. Moreover, the loops $\ell_{i+2}, \ell_{i+3},\ldots,\ell_{j}$ are consecutively disjoint, so they define a path in $\mathcal{AC}(\Sigma)$ of length strictly less than $d$. Thus we conclude (by the triangle inequality) that $d_{F_i}(\ell_h, \ell_j) \geq d_{F_i}(\ell_h, \ell_{i+2}) - d > k$.
\end{proof}

We will give such paths a name:

\begin{Def}
A \textit{$(d,k)$ flexipath} is a path $\ell_0,\ldots,\ell_d$ in $\mathcal{C}(\Sigma)$ such that
\begin{enumerate}
\item For each even $i$, $\ell_i$ bounds a once-punctured torus containing $\ell_{i-1}$ (if $i > 0$) and $\ell_{i+1}$ (if $i < d$).
\item For each $h < i < j$ such that $i$ is even and $h$ and $j$ are odd, $d_{F_i}(\ell_h, \ell_j) > k$.
\end{enumerate}
\end{Def}

In the construction of $\ell_0,\ldots,\ell_d$, we noted that for each even $i \leq d-2$, we could choose $d_i(\ell_{i-1}, \ell_{i+1})$ to be arbitrary. We will say that a $(d,k)$ flexipath is \textit{strict} if $d_i(\ell_{i-1}, \ell_{i+1}) > 6$ for every even $i \leq d - 2$. We will say that the path is \textit{almost strict} if $d_i(\ell_{i-1}, \ell_{i+1}) > 6$ for every odd $i \leq d-2$, except for one even value of $i$ where $\ell_{i-1} \cap \ell_{i+1}$ is a single point. By combining Lemmas~\ref{lem:flexipath1} and~\ref{lem:flexipath2}, we have:

\begin{Coro}
\label{coro:flexipath}
For every pair of integers $k > 0$ and $d \geq 2$ (where $d$ is even), there is a $(d,k)$ flexipath in $\mathcal{C}(\Sigma)$. Moreover, we can choose this path to be strict or almost strict.
\end{Coro}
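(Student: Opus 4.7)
The plan is to run the inductive construction preceding Lemma~\ref{lem:flexipath1} and then to exploit the freedom left at each even index to impose the additional intersection condition required for strictness (or almost strictness).

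First I would apply Lemma~\ref{lem:flexipath1} to produce a path $\ell_0,\ldots,\ell_d$ satisfying condition (1) of the flexipath definition, along with the asymmetric distance bounds $d_{F_i}(\ell_j,\ell_{i+2}) > k+d$ at each odd index $i$. Lemma~\ref{lem:flexipath2} immediately upgrades these to the symmetric bound $d_{F_i}(\ell_h,\ell_j)>k$ for all $h<i<j$ of the appropriate parity, which is exactly condition (2) of the flexipath definition. Together this establishes the first sentence of the corollary.

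For strictness, notice that in the inductive construction each loop $\ell_{i+1}$ (for $i$ even, $i\leq d-2$) was picked out of the once-punctured torus $F_i$, and the only constraint placed on that choice for Lemmas~\ref{lem:flexipath1} and~\ref{lem:flexipath2} was that certain later arcs $\alpha_j$ have large projection distance in subsequently-chosen subsurfaces $F_j$. Since $\mathcal{AC}(F_i)$ has infinite diameter, I can additionally demand at each such stage that the chosen $\ell_{i+1}$ satisfy $d_{F_i}(\ell_{i-1},\ell_{i+1}) > 6$ without losing the freedom needed to satisfy the later high-distance constraints. Performing this refinement at every even $i\leq d-2$ yields a strict $(d,k)$ flexipath. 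For the almost strict version, I would make the same choice at every even index except one fixed $i_0$, at which I instead pick $\ell_{i_0+1}$ to be a simple closed curve in $F_{i_0}$ meeting $\ell_{i_0-1}$ transversely in a single point (for instance a standard meridian-longitude pair in the once-punctured torus $F_{i_0}$, which always exist).

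The main obstacle is checking that this single weakening at $i_0$ does not disrupt any of the distance estimates downstream. The proof of Lemma~\ref{lem:flexipath2} uses only (a) the pairwise disjointness of consecutive loops $\ell_i,\ell_{i+1}$, which is untouched by the modification since $\ell_{i_0-1}$ and $\ell_{i_0+1}$ are not consecutive in the sequence, and (b) the odd-index projection bounds from Lemma~\ref{lem:flexipath1}, which are still secured because the arcs $\alpha_j$ producing those bounds are chosen after $\ell_{i_0+1}$ is fixed and therefore do not care how $\ell_{i_0+1}$ sits relative to $\ell_{i_0-1}$. Hence the triangle-inequality argument goes through unchanged, and the resulting path is an almost strict $(d,k)$ flexipath.
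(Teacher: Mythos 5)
Your proof is correct and takes essentially the same route the paper intends: the paper offers no separate argument for the corollary beyond ``by combining Lemmas~\ref{lem:flexipath1} and~\ref{lem:flexipath2}'' together with the observation, made just before the flexipath definition, that the constructor is free to make the choice of $\ell_{i+1}$ in the once-punctured torus $F_i$ (for $i$ even) however it likes. You correctly invoke the two lemmas for existence, then exploit that same freedom to impose $d_{F_i}(\ell_{i-1},\ell_{i+1})>6$ at each interior even level for strictness, and at one designated level instead take a curve dual to $\ell_{i_0-1}$ in the punctured torus for almost strictness; your check that neither modification perturbs the downstream subsurface-projection bounds (which constrain only the later arcs $\alpha_j$, chosen after $\ell_{i_0+1}$ is fixed) is exactly the point the paper leaves tacit.
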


By now, the role of the integer $k$ should be clear: It keeps track of the subsurface distances, and thus will allow us to use Lemma~\ref{lem:mustmissF} to analyze the path $\ell_0,\ldots,\ell_d$. Note that the path $\ell_0,\dots,\ell_d$ actually determines an infinite family of length-$d$ paths from $\ell_0$ to $\ell_d$: For each even $i \leq d - 2$, the complement in $\Sigma$ of $\ell_{i-1} \cup \ell_{i+1}$ is a once-punctured surface of genus $g - 1$. Since $g \geq 2$, there are infinitely many isotopy classes of loops in this subsurface, each of which is disjoint from both $\ell_{i-1}$ and $\ell_{i+1}$. Thus for any essential loop $\ell'_i$ in this subsurface, the path $\ell_0,\ldots,\ell_{i-1},\ell'_i,\ell_{i+1},\ldots,\ell_d$ is also a path of length $d$. We can do the same for any even index, creating infinitely many distinct geodesics in the curve complex that contain all the odd-index vertices of the original path, as in Figure~\ref{fig:flexipathincc}. This is the reason for calling these flexipaths.
\begin{figure}[htb]
  \begin{center}
  \includegraphics[width=4.5in]{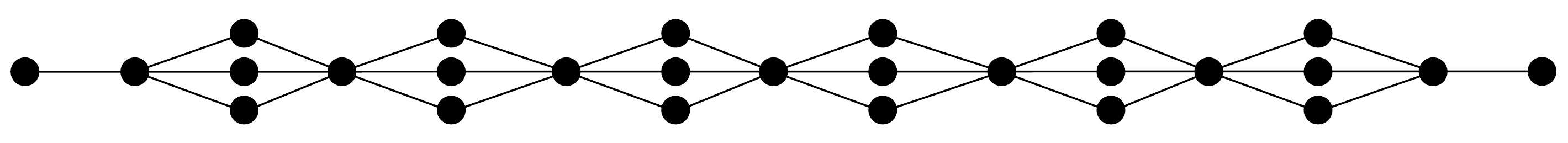}
  \caption{Multiple possible geodesics following a flexipath in the curve complex.}
  \label{fig:flexipathincc}
  \end{center}
\end{figure}

So our original path $\ell_0,\ldots,\ell_d$ will never be the unique geodesic from $\ell_0$ to $\ell_d$, but by focusing on the loops with odd indices, we can get pretty close.

\begin{Lem}
If $\ell_0,\ldots,\ell_d$ is a $(d,k)$ flexipath then every path in $\mathcal{C}(\Sigma)$ of length less than or equal to $k$ from $\ell_0$ to $\ell_d$ passes through every $\ell_i$ for $i$ odd. In particular, if $k \geq d$ then every $(d,k)$ flexipath is a geodesic from $\ell_0$ to $\ell_d$.
\end{Lem}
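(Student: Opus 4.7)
The plan is to apply Lemma~\ref{lem:mustmissF} to the subsurfaces $F_i$ for odd $i$, whose complements in $\Sigma$ are open annular neighborhoods of the non-separating loops $\ell_i$. The key single observation is that any essential simple closed curve in $\Sigma$ disjoint from such an $F_i$ must lie in this annular neighborhood and therefore be isotopic to the core $\ell_i$.

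For the main assertion, fix any odd $i$ with $1 \le i \le d-1$ and any path $\gamma_0 = \ell_0, \gamma_1, \ldots, \gamma_n = \ell_d$ of length $n \le k$. By the flexipath condition, $d_{F_i}(\ell_0, \ell_d) > k \ge n$, so Lemma~\ref{lem:mustmissF} produces some $\gamma_m$ disjoint from $F_i$. The observation above forces $\gamma_m = \ell_i$ in $\mathcal{C}(\Sigma)$, so the path passes through $\ell_i$.

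For the ``in particular'' clause, assume $k \ge d$ and take any path of length $n \le k$ from $\ell_0$ to $\ell_d$. The previous paragraph puts every $\ell_i$ with $i$ odd on the path. One next shows that the appearances occur in increasing order of $i$: for odd $i < j$ in $[1,d-1]$, apply Lemma~\ref{lem:mustmissF} to the initial segment of the path ending at the first occurrence of $\ell_j$; this segment has length at most $n \le k < d_{F_i}(\ell_0, \ell_j)$, so it already contains a vertex isotopic to $\ell_i$, placing $\ell_i$ strictly before $\ell_j$ in the path.

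With the ordering in hand, the path visits $\ell_0, \ell_1, \ell_3, \ldots, \ell_{d-1}, \ell_d$ in that order, and it remains to bound $n$ below. For consecutive odd indices $i$ and $i+2$, both $\ell_i$ and $\ell_{i+2}$ are non-separating essential non-boundary-parallel simple closed curves inside the once-punctured torus $F_{i+1}$; since any two non-isotopic curves of this type in a once-punctured torus have positive geometric intersection, $d_{\mathcal{C}(\Sigma)}(\ell_i, \ell_{i+2}) \ge 2$. The end gaps $(\ell_0, \ell_1)$ and $(\ell_{d-1}, \ell_d)$ contribute at least one edge each. Summing the $d/2 - 1$ interior contributions of size $\ge 2$ and the two end contributions of size $\ge 1$ yields $n \ge 2(d/2 - 1) + 2 = d$, so the flexipath, which has length $d$, realizes $d(\ell_0, \ell_d)$ and is therefore a geodesic.

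The only step requiring real care is the parity choice in the first paragraph: the argument hinges on $F_i$ being the \emph{complement of a non-separating loop}, i.e., on $i$ being odd, so that every loop disjoint from $F_i$ is isotopic to the single loop $\ell_i$. Once the correct projection subsurface is identified, the ordering argument is a second application of Lemma~\ref{lem:mustmissF} and the length lower bound is a routine edge count using only the once-punctured-torus structure built into the flexipath.
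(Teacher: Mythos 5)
Your proof is correct, and on the geodesic clause it takes a genuinely different route from the paper. The first paragraph is the paper's own argument: for odd $i$ one has $d_{F_i}(\ell_0,\ell_d)>k$, so Lemma~\ref{lem:mustmissF} forces any path of length $\le k$ to contain a vertex disjoint from $F_i$, and since $\Sigma\setminus F_i$ is an annular neighborhood of the non-separating loop $\ell_i$, that vertex must be $\ell_i$. For the geodesic clause, the paper argues that no two distinct odd-indexed loops $\ell_i,\ell_j$ are disjoint (hence no two can be consecutive along the path) and then counts vertices. You instead pin down the \emph{order} in which the odd $\ell_i$ first appear, via a second application of Lemma~\ref{lem:mustmissF} to the initial segment ending at the first occurrence of $\ell_j$, and then lower-bound the gaps, using only the intersection fact for \emph{consecutive} odd indices $i,i+2$ --- which is immediate because these are distinct essential non-separating curves in the once-punctured torus $F_{i+1}$. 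Your route is arguably cleaner here: the paper's claim that $\ell_i,\ell_j$ intersect for all odd $i\ne j$ is transparent only when $|i-j|=2$; for larger gaps it really needs an indirect argument (e.g.\ that a shortcut path $\ell_0,\dots,\ell_i,\ell_j,\dots,\ell_d$ would have length $<d\le k$ and yet omit some intermediate odd $\ell_{i'}$, violating the first half of the lemma). Your ordering step is extra work, but it removes that subtlety and makes the edge count $1+2(d/2-1)+1=d$ self-contained. One caveat you share with the paper: both proofs use $d_{F_i}(\ell_0,\cdot)>k$ with the even index $h=0$, which relies on the odd-$i$, arbitrary-$h,j$ subsurface inequality that Lemma~\ref{lem:flexipath2} actually establishes rather than the more restrictive parities written in the Definition of a flexipath; you correctly read through to the intended condition (odd $i$), which is the right one.
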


\begin{proof}
Because the path is a $(d,k)$ flexipath, for each odd $i$ between 0 and $d$, we have $d_{F_i}(\ell_0,\ell_d) > k$. By Lemma~\ref{lem:mustmissF}, this implies that every path of length less than or equal to $k$ from $\ell_0$ to $\ell_d$ must pass through a vertex representing a loop disjoint from $F_i$. However, the complement in $\Sigma$ of $F_i$ is, by construction, an open annular neighborhood of $\ell_i$. Thus the only essential loop disjoint from $F_i$ is $\ell_i$ (up to isotopy) and we conclude that $\ell_i$ is a vertex in the path from $\ell_0$ to $\ell_d$.

For $k \geq d$, we immediately know that our path contains the $\frac{d}{2}$ loops $\ell_1,\ell_3,\ldots,\ell_{d-1}$, though we don't immediately know what order they appear in. However, no two loops $\ell_i$, $\ell_j$ are disjoint for $i \neq j$, both odd, so no such loops $\ell_i$, $\ell_j$ can appear consecutively in a path. This implies that the total number of loops in the path is at least $d$. Since no path from $\ell_0$ to $\ell_d$ can be shorter than length $d$, we conclude that the flexipath $\ell_0,\ldots,\ell_d$ is a geodesic.
\end{proof}

\section{Projections of handlebody sets}
\label{sect:handlebodyproj}
Before we begin to construct the three-manifolds promised in Theorems~\ref{thm:main1} and~\ref{thm:main2}, we will need two technical Lemmas.

\begin{Lem}
\label{lem:diskprojection}
Let $H$ be a handlebody and $F \subset \partial H$ a subsurface such that for every essential disk $D \subset H$, the intersection of $\partial D \cap F$ contains at least three arcs. Then the projection of the disk set for $H$ into $\mathcal{AC}(F)$ has diameter at most four.
\end{Lem}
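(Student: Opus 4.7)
The plan is to induct on the geometric intersection number $|D_1 \cap D_2|$ for two essential disks $D_1, D_2 \subset H$, after isotoping them to minimize their intersection. In the base case $D_1 \cap D_2 = \emptyset$, the boundaries $\partial D_1$ and $\partial D_2$ are disjoint in $\partial H$, so any arc of $\partial D_1 \cap F$ is disjoint from any arc of $\partial D_2 \cap F$ in $F$; hence $p_F(\partial D_1) \cup p_F(\partial D_2)$ spans a simplex of $\mathcal{AC}(F)$, giving $d_F(p_F(\partial D_1), p_F(\partial D_2)) \leq 1$.

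For the inductive step I would use a classical outermost-subdisk surgery. Choose an outermost arc $\alpha$ of $D_1 \cap D_2$ inside $D_2$, cutting off a subdisk $\delta \subset D_2$ with $\partial \delta = \alpha \cup \beta$ and $\beta \subset \partial D_2 \subset \partial H$. Slicing $D_1$ along $\alpha$ and capping each piece off with a parallel push-off of $\delta$ produces two properly embedded disks $D_1'$ and $D_1''$ with $\partial D_1' = a \cup \beta'$ and $\partial D_1'' = b \cup \beta''$, where $a \cup b = \partial D_1$ and $\beta', \beta''$ are parallel push-offs of $\beta$ into $\partial H$. Both disks have strictly fewer intersections with $D_2$ than $D_1$ does. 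Since $D_1$ is essential, at least one of $D_1', D_1''$ is essential; say $D_1'$ is, so by the hypothesis of the lemma $|\partial D_1' \cap F| \geq 3$.

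The key counting step observes that every arc of $\partial D_1' \cap F$ is a component of $(a \cap F) \cup (\beta' \cap F)$, glued across at most two points where $\partial \alpha$ may lie in $F$. Such an arc is therefore of exactly one of three types: pure in $a$ (so it is itself an arc of $\partial D_1 \cap F$), pure in $\beta'$ (so, by the parallelism $\beta' \simeq \beta \subset \partial D_2$, it is parallel to an arc of $\partial D_2 \cap F$), or concatenated across an endpoint of $\alpha$ lying in $F$. There are at most two concatenated arcs, so with at least three arcs total, at least one arc is pure. A pure arc in $a$ provides a vertex in $p_F(\partial D_1) \cap p_F(\partial D_1')$, so $d_F(p_F(\partial D_1), p_F(\partial D_1')) = 0$; applying the inductive hypothesis to $(D_1', D_2)$ then gives $d_F(p_F(\partial D_1'), p_F(\partial D_2)) \leq 4$, and the triangle inequality closes the argument. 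A pure arc in $\beta'$ symmetrically contributes to $p_F(\partial D_2) \cap p_F(\partial D_1')$.

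The main obstacle is the subcase in which the pure arcs of both $D_1'$ and $D_1''$ happen to land only on the $\beta$-side, so no identification of a vertex in $p_F(\partial D_1) \cap p_F(\partial D_1')$ is immediately available. I would handle this by invoking the three-arc hypothesis applied to $D_1$ itself: the arcs of $\partial D_1 \cap F$ are distributed between $a \cap F$ and $b \cap F$ (with at most two being split by the points of $\partial \alpha \cap F$), so pigeonholing forces at least one of $a \cap F$ or $b \cap F$ to contain a pure arc, and this arc survives as a pure arc in $\partial D_1' \cap F$ or $\partial D_1'' \cap F$, producing the sought overlap. If instead one of the surgered disks, say $D_1''$, is inessential, then $b \cup \beta''$ bounds a disk in $\partial H$, so $b$ is isotopic rel endpoints to $\beta''$, which makes $\partial D_1$ isotopic to $\partial D_1'$ in $\partial H$ and hence $p_F(\partial D_1) = p_F(\partial D_1')$; induction on $(D_1', D_2)$ again closes the proof with bound $4$.
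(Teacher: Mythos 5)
Your induction does not close. You invoke the inductive hypothesis $d_F(p_F(\partial D_1'), p_F(\partial D_2)) \le 4$ together with the shared vertex in $p_F(\partial D_1) \cap p_F(\partial D_1')$ and claim ``the triangle inequality closes the argument.'' But $d_F$ as defined in this paper is the \emph{minimum} distance between the two projection sets, and min-distance between sets does not obey the triangle inequality: that $p_F(\partial D_1)$ and $p_F(\partial D_1')$ share a vertex $v$, and that \emph{some} vertex of $p_F(\partial D_1')$ is within $4$ of $p_F(\partial D_2)$, tells you nothing about where $v$ itself sits relative to $p_F(\partial D_2)$. If instead you interpret the inductive quantity as a diameter (or sup-distance), which does behave additively, then passing through $v$ costs you one extra edge each time: the bound is $4+1 = 5$ after one surgery and grows by $1$ with each further reduction of $|D_1 \cap D_2|$, which is unbounded. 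Either way the step fails. The subcase where the only pure arc lies on the $\beta$-side has the same defect compounded: a vertex shared by $p_F(\partial D_1')$ and $p_F(\partial D_2)$ gives no information at all about $p_F(\partial D_1)$, so that branch produces no usable conclusion.

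The paper avoids this entirely by not inducting at all. It fixes once and for all a compressing disk $D$ that globally minimizes $|\partial D \cap F|$, and then for an arbitrary disk $D'$ performs a \emph{single} outermost-disk argument: if the outermost subdisk $E \subset D'$ cut off by an arc of $D \cap D'$ met $F'$ in fewer than two arcs, then one could surger $D$ to reduce $|\partial D \cap F'|$ (contradicting minimality) or produce an essential disk meeting $F$ in at most two arcs (contradicting the hypothesis of the lemma). Hence $E \cap \partial H$ contains an arc lying in $F$ with interior disjoint from $\partial D$, giving a vertex of $p_F(\partial D')$ disjoint from all of $p_F(\partial D)$ and hence within distance $2$ of every vertex of $p_F(\partial D)$. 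The minimizing disk $D$ then serves as a hub, and the triangle inequality applied once through $p_F(\partial D)$ yields diameter $\le 4$. The crucial structural difference is that minimality of $D$ is a global invariant that does not get destroyed; your surgery-based reduction has no such invariant, which is why the bound leaks by one at every step.
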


\begin{proof}
Let $F' \subset \partial H$ be the closure of the complement $\partial H \setminus F$. Note that if $\partial D$ intersects both $F$ and $F'$ then the number of arcs or intersection is the same in both subsurfaces. Let $D \subset H$ be a compressing disk for $H$ that intersects $F'$ (or, equivalently, $F$) minimally among all essential disks in $H$. We will show that for any other disk $D' \subset H$, the distance in $\mathcal{AC}(F)$ from the projection of $D$ into $F$ to any arc in the projection of $D'$ is at most two. 

Isotope $D'$ so that $D \cap D'$ is a collection of arcs, properly embedded in each of the disks. Let $E \subset D'$ be an outermost disk bounded by an arc of intersection. Assume for contradiction that $\partial E$ is either disjoint from $F'$ or intersects $F'$ in a single arc. (This arc may be disjoint from $D$ or may have one or both endpoints in $\partial D$.) Let $\alpha = E \cap D$ be the arc of $\partial E$ that is not contained in $\partial H$. The arc $\alpha$ cuts $D$ into two disk, which we will call $D_0$ and $D_1$. 

If either of $D_0$ or $D_1$ intersects $F'$ in strictly more arcs than $E$ then replacing $D$ with the union of $E$ and the other disk will produce a new compressing disk for $H$ whose boundary intersects $F'$ in strictly fewer arcs than $D$. These arcs, indicated by the thick lines in the left and center pictures in Figure~\ref{fig:EandD}, may be in the interior of $E \cap \partial H$ or may contain endpoints of $E \cap \partial H$. By assumption, $D \cap F'$ was minimal, so this is impossible. However, if $D_0 \cap F'$ and $D_1 \cap F'$ each consists of at most one arc then $D \cap F$ will consist of at most two arcs. This contradiction one of our initial assumptions, so we conclude that $E \cap F'$ must contain at least two arcs.
\begin{figure}[htb]
  \begin{center}
  \includegraphics[width=4.5in]{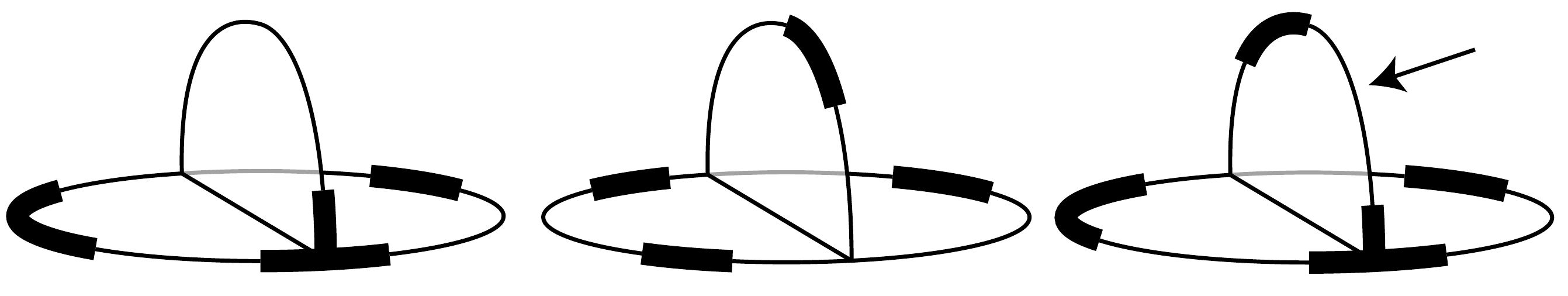}
  \put(-280,32){$E$}
  \put(-300,12){$D$}
  \caption{The compressing disk $D$ and the outermost disk $E \subset D'$. Thick lines indicate where the disks intersect $F'$ and the arrow points to an arc of $\partial E$ that is properly embedded in $F$.}
  \label{fig:EandD}
  \end{center}
\end{figure}

Because there are two arcs of $E \cap F'$, there is also an arc of $E \cap F$ in the interior of the arc $E \cap \partial H$. This subarc, indicated by the arrow on the right side of Figure~\ref{fig:EandD}, is properly embedded in $F$. By construction, the interior of $E \cap \partial H$ is disjoint from $\partial D$ so this arc is in the projection of $\partial D'$ into $F$ and disjoint from the projection of $\partial D$. Any two arcs in the projection of $D'$ are disjoint, so the distance from any arc in the projection of $D$ to any arc in the projection of $D'$ is at most two. Because $D'$ was aritrary, if $D''$ is another compressing disk for $H$ then we can apply the same argument to conclude that the projection of $\partial D''$ to $F$ is distance at most two from any arc of the projection of $D$. By applying the triangle inequality, we see that the distance in $\mathcal{AC}(F)$ from any arc of the projection of $D'$ to any arc of the projection of $D''$ is at most four.
\end{proof}

\begin{Lem}
\label{lem:punctureinclusion}
Let $\Sigma$ be a compact, connected, closed, orientable surface of genus at least two and let $F \subset \Sigma$ be the complement of an open disk (a puncture) in $\Sigma$. Let $\alpha$ and $\beta$ be essential loops in $F$. Then $d_F(\alpha,\beta) \geq \frac{1}{2} d_\Sigma(\alpha, \beta)$.
\end{Lem}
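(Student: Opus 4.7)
The plan is to prove the equivalent form $d_\Sigma(\alpha, \beta) \leq 2\,d_F(\alpha,\beta)$ by converting a geodesic $\alpha = v_0, v_1, \ldots, v_m = \beta$ in $\mathcal{AC}(F)$ of length $m = d_F(\alpha,\beta)$ into a path in $\mathcal{C}(\Sigma)$ of length at most $2m$. I will use a ``capping-off'' map $\phi: \mathcal{AC}(F) \to \mathcal{C}(\Sigma)$: a loop-vertex $v$ is sent to itself (viewed as a curve in $F \subset \Sigma$); an arc-vertex $v$, with endpoints on $\partial F$, is sent to the simple closed curve $\hat v$ obtained by joining its endpoints via an arc in the complementary disk $D = \overline{\Sigma \setminus F}$. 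Since $D$ is a disk, $\hat v$ is well-defined up to isotopy in $\Sigma$, and since $\Sigma$ has genus at least two, any $\hat v$ bounding a disk in $\Sigma$ would force $v$ to be boundary-parallel in $F$, contradicting essentiality of the arc-vertex.

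The heart of the argument is showing $d_\Sigma(\phi(v_i), \phi(v_{i+1})) \leq 2$ for each consecutive pair. When at least one of $v_i, v_{i+1}$ is a loop, that loop lies in $\mathring F$, hence is disjoint from $D$ and from the other vertex, so $\phi(v_i)$ and $\phi(v_{i+1})$ can be realized disjointly in $\Sigma$. When both are arcs, the two capping arcs in $D$ may be forced to intersect if the four endpoints interleave on $\partial D$, and an intermediate loop is needed.

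To produce such an intermediate, I consider the cut surface $F' = F \setminus (v_i \cup v_{i+1})$, which has $\chi(F') = \chi(F) = 1 - 2g \leq -3$. A boundary-circle count -- bounding the number of boundary circles of $F'$ via the combinatorics of how $\partial F$ is cut at up to four endpoints and the two arcs doubled -- precludes $F'$ from decomposing into only disks, annuli, and pairs of pants, since such a decomposition would force strictly more boundary circles than $F'$ can have. Hence some component $K$ of $F'$ has $\chi(K) \leq -2$, which guarantees an essential simple closed curve $w$ in $K$ not parallel to any boundary of $K$. A standard bigon analysis then shows $w$ is essential in $F$ -- any compressing disk or $\partial F$-parallel annulus for $w$ in $F$ can be isotoped into $K$, contradicting essentiality there -- hence essential in $\Sigma$; and since $w \subset \mathring F$, $w$ is disjoint from the capping arcs in $D$, hence from $\phi(v_i) \cup \phi(v_{i+1})$. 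Concatenating these length-$2$ bridges yields the desired path of length at most $2m$ in $\mathcal{C}(\Sigma)$. The main obstacle is this both-arcs case, combining the boundary-count (to locate a non-pants component $K$) with the bigon analysis (to upgrade $w$'s essentiality from $K$ to $\Sigma$).
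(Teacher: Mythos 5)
Your overall strategy---cap off arcs across the disk $D = \overline{\Sigma \setminus F}$ to get a map into $\mathcal{C}(\Sigma)$ and show consecutive images are distance at most $2$---is the same as the paper's. However, your argument in the both-arcs case has a genuine gap, and the error is hidden by an incorrect Euler characteristic. Cutting a surface along a properly embedded arc \emph{increases} the Euler characteristic by $1$, so cutting $F$ along the two disjoint arcs $v_i, v_{i+1}$ gives $\chi(F') = \chi(F) + 2 = 3 - 2g$, not $1-2g$. For $g=2$ this is $\chi(F') = -1$, and $F'$ can indeed be a single pair of pants (e.g.\ realize $F$ as a disk with four $1$-handles and let $v_i, v_{i+1}$ be cocores of two of them); in that case there is \emph{no} essential non-peripheral curve in $F'$ at all, so your boundary-circle count cannot rule this out and your intermediate curve $w$ does not exist. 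Your argument does go through for $g \ge 3$, where $\chi(F') \le -3$ and the bound of at most four boundary circles for $F'$ does force a component with $\chi \le -2$, but the lemma is stated for $g \ge 2$.

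The paper sidesteps this entirely: when both $v_i$ and $v_{i+1}$ are arcs, the capping arcs in the disk $D$ can be chosen to intersect in at most one point (depending on whether the four endpoints interleave on $\partial D$), so the capped-off loops $\hat v_i, \hat v_{i+1}$ intersect in at most one point in $\Sigma$. Two simple closed curves on a closed surface of genus at least two that intersect in at most one point are distance at most two in $\mathcal{C}(\Sigma)$---if they intersect once, the boundary of a regular neighborhood of their union is essential (it bounds a once-punctured torus on one side and a once-punctured genus $g-1 \ge 1$ surface on the other) and is disjoint from both. This bridge curve lives in $\Sigma$ rather than inside the cut surface $F'$, which is why it exists even in your problematic $g=2$ case. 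You should replace your Euler-characteristic/boundary-count step with this observation; then the rest of your write-up is fine.
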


\begin{proof}
Let $\alpha_0,\alpha_1,\ldots,\alpha_n$ be a path in $\mathcal{AC}(F)$ such that $\alpha_0 = \alpha$ and $\alpha_n = \beta$. For each $i$, if $\alpha_i$ is a simple closed curve then it determines a vertex $v_i$ in $\mathcal{C}(\Sigma)$. If $\alpha_i$ is a properly embedded arc then we can extend it across the disk $\Sigma \setminus F$ to form an essential loop, which also determines a vertex $v_i$ in $\mathcal{C}(\Sigma)$. If two curves $\alpha_i$, $\alpha_{i+1}$ are arcs in $F$ then the loops that they define intersect in at most one point, so the vertices $v_i$, $v_{i+1}$ are distance at most two in $\mathcal{C}(\Sigma)$. If either of $\alpha_i$, $\alpha_{i_1}$ is a loop then the corresponding loops in $\Sigma$ are disjoint. Thus the distance from $v_0$ to $v_n$ is at most $2n$ in $\mathcal{C}(\Sigma)$, so $d_F(\alpha,\beta) \geq \frac{1}{2} d_\Sigma(\alpha, \beta)$.
\end{proof}

Note that this Lemma is not true if $F$ is the complement in $\Sigma$ of two or more open disks (punctures). In particular, if we choose $\alpha$ and $\beta$ so that two distinct punctures are contained in the same component of $\Sigma \setminus (\alpha \cup \beta)$ then there will be an arc between these punctures in the complement of $\alpha$ and $\beta$. Thus $d_F(\alpha, \beta)$ will be two, regardless of $d_\Sigma(\alpha,\beta)$.

\section{Constructing the first Heegaard splitting}
\label{sect:splitting1}

We will construct a three-manifold with boundary based on the flexipath $\ell_0,\ldots,\ell_d$ as follows: Consider the product $\Sigma \times [0,1]$. Let $N_0 \subset \Sigma \times \{0\}$ be a regular neighborhood of $\ell_0 \times \{0\}$ and let $N_1 \subset \Sigma \times \{1\}$ be a regular neighborhood of $\ell_d \times \{1\}$. Let $M'$ be the result of gluing one two-handle to $\Sigma \times [0,1]$ along $N_0$ and a second two-handle along $N_1$. In other words, we glue a copy of $D^2 \times [0,1]$ along each of $N_0$, $N_1$ so that the annulus $\partial D^2 \times [0,1]$ is identified to the annulus $N_0$, $N_1$, respectively.

\begin{Def}
A three-manifold $M'$ produced by gluing two-handles to $\Sigma \times [0,1]$ along $\ell_0 \times \{0\}$ and $\ell_d \times \{1\}$, where $\ell_0, \ell_d \subset \Sigma$ are essential, separating loops will be called the \textit{path manifold} defined by $\ell_0, \ell_d$.
\end{Def}

While a path manifold is determined by only two loops $\ell_0,\ell_d$, in practice these loops will be the endpoints of a path, hence the name path manifold. Because the loops are separating in $\Sigma$, by assumption, a path manifold $M'$ will have four boundary components: two that intersect $\Sigma \times \{0\}$ and two that intersect $\Sigma \times [0,1]$. The surface $\Sigma \times \{\frac{1}{2}\} \subset \Sigma \times [0,1]$ is a Heegaard surface for $M'$ dividing it into two compression bodies. In particular, $\Sigma \times \{\frac{1}{2}\}$ defines a partition of the boundary components of $M'$ that divides the two boundary components that intersect $\Sigma \times \{0\}$ from the two components that intersect $\Sigma \times \{1\}$. 
\begin{figure}[htb]
  \begin{center}
  \includegraphics[width=4.5in]{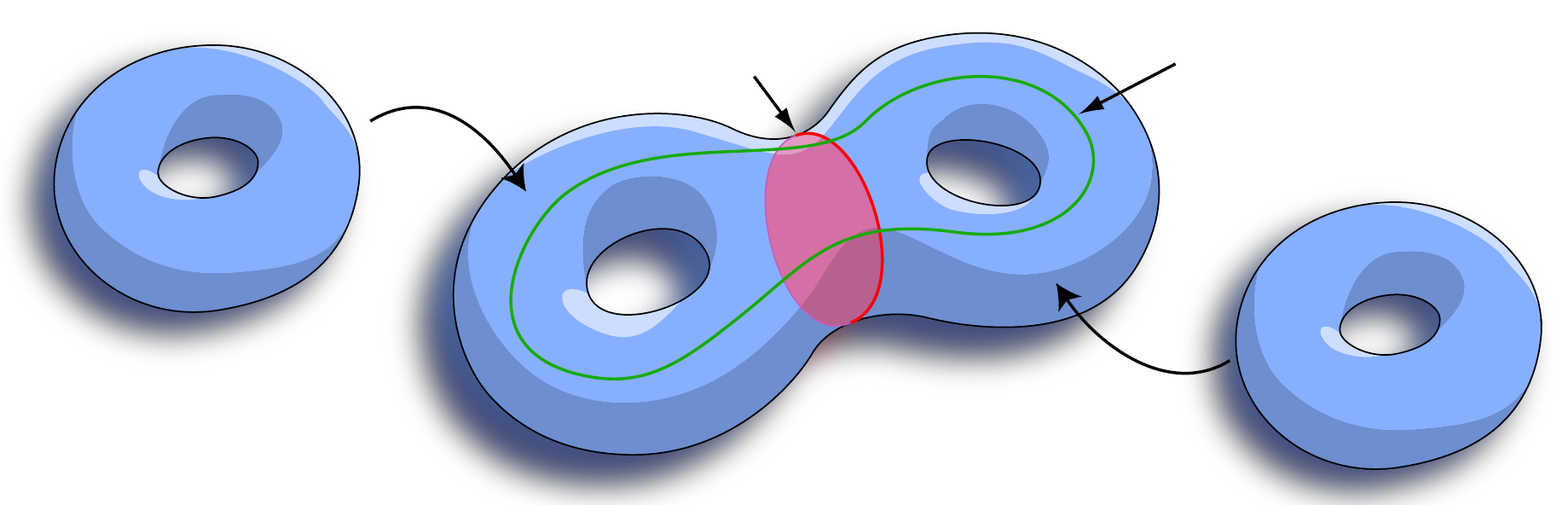}
  \put(-175,92){$\ell_0$}
  \put(-165,10){$\Sigma \times \{0\}$}
  \put(-290,30){$X$}
  \put(-40,65){$X$}
  \put(-80,92){$\ell_d$}
  \caption{Filling two boundary components of a path manifold $M'$.}
  \label{fig:filling}
  \end{center}
\end{figure}

For any other partition of the boundary components of $M'$, there will also be a Heegaard surface and these surfaces will be in distinct isotopy classes from $\Sigma$ because isotopies preserve the partitions of the boundary. However, stabilizing a Heegaard splitting also preserves the boundary partitions, so when considering different Heegaard surfaces in a given manifold, one usually assumes that they induce the same boundary partition. We will construct $M$ by filling in all the boundary components of $M'$, so that the only partition of $\partial M = \emptyset$ is the empty partition. The incompressible surface promised in Theorem~\ref{thm:main1} and the alternate Heegaard surface promised by Theorem~\ref{thm:main2} will be induced by surfaces that come from different boundary partitions of $M'$. 

We will say that $M = X \cup M'$ is a \textit{handlebody filling} along $T \subset \partial M'$ of a three-manifold $M'$ with boundary if $X$ is a handlebody and $M$ is the result of gluing $\partial X$ to $T$ by a homeomorphism.

\begin{Def}
Assume $T$ is a boundary component of a path manifold $M'$ such that $T \cap (\Sigma \times \{0\})$ is a non-empty subsurface $F$ and let $M'' = X \cup M'$ be a handlebody filling along $T$. We will say that $M''$ is a \textit{distance-$k$ filling} of $M'$ with respect to a set of loops $\{\ell_0,\ldots,\ell_d\}$ if for every essential disk $D \subset X$ and each $i$ such that $\ell_i \cap F$ is non-empty, we have $d_F(\partial D, \ell_i) > k + 4$. We require a symmetric condition in the case when $T \cap (\Sigma \times \{1\})$ is non-empty.
\end{Def}

Bounding the distance by $k + 4$ rather than just $k$ may seem odd, but it will be clear later why this notation is more efficient. If $M''$ is a distance-$k$ filling of $M'$, then we can further glue handlebodies into the remaining three boundary components. If the same condition applies to each of these fillings then we will call the resulting three-manifold $M$ a \textit{closed distance-$k$ filling} of $M'$.

These distance-$k$ fillings are very similar to a method used by Qiu-Zou-Gou~\cite{QiuZouGuo} to find handlebody fillings that preserve the distance of a Heegaard splitting. It is not immediately obvious that we can find a distance-$k$ filling for $M'$, since a handlebody of genus greater than one has infinitely many essential disks, defining an infinite diameter set in the curve complex of its boundary. However, it turns out that it is possible.

\begin{Lem}
Let $M'$ be a path manifold defined by separating loops $\ell_0 \times \{0\}$ and $\ell_d \times \{1\}$ such that $d(\ell_0, \ell_d) > 1$. Then for any positive integer $k$, and every finite set of loops $\ell_0,\ldots,\ell_d$, there is a distance-$k$ filling $M$ of $M'$.
\end{Lem}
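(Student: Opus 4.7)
The plan is to construct the filling one boundary component of $M'$ at a time; since the four fillings are independent, it suffices to produce a distance-$k$ filling along a single component $T$, and then repeat the argument at the other three. Take $T$ on the $\Sigma\times\{0\}$-side, so that $F = T \cap (\Sigma\times\{0\})$ is one side of $\Sigma$ cut along $\ell_0$ and $T\setminus F$ is the disk co-core of the attached two-handle. Because $\partial F$ bounds a disk in $T$, every essential simple closed curve in $T$ is isotopic into $F$, giving a canonical embedding of $\mathcal{C}(T)$ onto the loop-vertex subcomplex of $\mathcal{AC}(F)$. The projections $p_F(\ell_i)$, taken in $\Sigma$ where $\partial F = \ell_0$ is essential, form a finite---and hence bounded---set $\Lambda \subset \mathcal{AC}(F)$, ranging over those indices with $\ell_i \cap F \ne \emptyset$.

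I would first pick any genus-$g_F$ handlebody $X$ and any identification $\phi_0: \partial X \to T$; under the embedding above, the image of the disk set $\mathcal{D}_0 := \phi_0(\mathcal{D}(X))$ sits inside $\mathcal{AC}(F)$ as an infinite, Masur--Minsky quasi-convex subset with proper limit set at infinity. Next I would choose a pseudo-Anosov $\psi: F \to F$ fixing $\partial F$ pointwise whose stable and unstable laminations lie outside this limit set---possible because the limit set is a proper subset of $\partial_\infty \mathcal{AC}(F)$ and generic pseudo-Anosovs of $F$ miss it---and extend $\psi$ by the identity across the disk $T \setminus F$ to get $\tilde\psi: T \to T$. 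Replacing $\phi_0$ by $\phi_n := \tilde\psi^n \circ \phi_0$ gives, for each $n$, a new disk set $\mathcal{D}_n = \psi^n(\mathcal{D}_0) \subset \mathcal{AC}(F)$.

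The key estimate is that for every fixed $\lambda \in \mathcal{AC}(F)$,
\[
d_{\mathcal{AC}(F)}(\mathcal{D}_n, \lambda) = d_{\mathcal{AC}(F)}(\mathcal{D}_0, \psi^{-n}(\lambda)) \longrightarrow \infty
\]
as $n \to \infty$, because $\psi^{-n}(\lambda)$ converges in the Gromov boundary to the repelling lamination of $\psi$, which lies outside the limit set of $\mathcal{D}_0$. Applying this to each of the finitely many $\lambda \in \Lambda$ and choosing $n$ large enough, one obtains $d_{\mathcal{AC}(F)}(\mathcal{D}_n, \Lambda) > k+4$. The filling $X \cup_{\phi_n} M'$ then satisfies the distance-$k$ condition at $T$, and performing this construction independently at the other three boundary components produces the desired distance-$k$ filling $M$ of $M'$.

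The main obstacle is the limit-set argument in the third paragraph: one must verify that the handlebody set, viewed inside $\mathcal{AC}(F)$ rather than merely $\mathcal{C}(F)$, is quasi-convex with proper limit set, and that pseudo-Anosovs of $F$ whose laminations miss this limit set exist in abundance. Both facts are standard consequences of the Masur--Minsky--Schleimer theory of handlebody sets, combined with the quasi-isometric embedding $\mathcal{C}(F) \hookrightarrow \mathcal{AC}(F)$; the hypothesis $d(\ell_0,\ell_d) > 1$ enters only indirectly, ensuring $M'$ is nontrivial enough that each side $F$ is essential and has $\mathcal{AC}(F)$ of infinite diameter, so the pseudo-Anosov machinery applies.
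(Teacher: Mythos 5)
Your proof is correct in spirit and reaches the same conclusion, but it takes a genuinely different route from the paper's. The paper splits into two cases by the genus of $T$: in genus one it uses that $\mathcal{AC}(F)$ has infinite diameter to move the single compressing disk far from the projected loops; in higher genus it invokes Hempel's technique to find a gluing whose disk set is far from the loops $\ell_i$ in $\mathcal{C}(T)$, and then applies Lemma~\ref{lem:punctureinclusion} to transfer that estimate from $\mathcal{C}(T)$ into $\mathcal{AC}(F)$. You instead iterate an explicit pseudo-Anosov $\psi$ of $F$ fixing $\partial F$ pointwise and work directly in $\mathcal{AC}(F)$; this avoids Lemma~\ref{lem:punctureinclusion} and handles both genera uniformly, but it silently uses the heavier facts that (a) the disk set remains Masur--Minsky quasi-convex, with proper Gromov limit set, after the quasi-isometric inclusion $\mathcal{C}(T) \hookrightarrow \mathcal{AC}(F)$, and (b) pseudo-Anosovs rel $\partial F$ exist whose stable/unstable laminations avoid that limit set. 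These are exactly the results underlying ``Hempel's techniques'' that the paper cites without proof, so you have essentially unpacked that black box rather than avoided it. Two small points worth noting: the hypothesis $d(\ell_0,\ell_d) > 1$ enters directly to guarantee that $\ell_d$ has non-empty projection to $F$ (not merely ``indirectly'' as you say); and one should observe that the genus-one case is absorbed by your argument because the disk set is a single vertex, so quasi-convexity and the limit-set properness are trivial there.
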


\begin{proof}
Assume, without loss of generality, that $F$ is a subsurface of $\Sigma \times \{0\}$ rather than $\Sigma \times \{1\}$. First note that since $d(\ell_0, \ell_d) > 1$, the intersection $\ell_0 \cap \ell_d$ is non-empty, so the projection of $\ell_d$ into $F$ is well defined. We will consider two cases, based on whether the genus of the boundary component $T$ has genus one or genus strictly greater than one. 

First assume the genus of $T$ is one, so $X$ is a genus-one handlebody. Let $\mu$ be the loop bounding the unique (up to isotopy) essential disk $D \subset X$. The subsurface $F$ will be a once-puntured torus and $\mathcal{AC}(F)$ has infinite diameter, so there is an essential, non-separating loop $\mu'$ in $F$ whose distance from the projection into $F$ of $\ell_d$ is at least $k + 4$. If we glue $\partial X$ to $T$ by a map that sends $\mu$ onto $\mu'$, we will guarantee that $d_{F}(\partial D, \ell_d) \geq k + 4$. 

Otherwise, if $T$ has genus greater than one, the set of loops bounding essential disks in $X$ will span a subcomplex of $\mathcal{C}(T)$ called the \textit{disk set} or \textit{handlebody set} which has infinite diameter in $\mathcal{C}(\Sigma)$. However, one can find loops in the curve complex that are arbitrarily far from the handlebody set, for example by using the techniques in Hempel's paper~\cite{Hempel}.   In fact, Hempel's construction of high distance Heegaard splittings is much stronger than this statement. 

Thus, since $\{\ell_i\}$ is a finite set, we can glue $X$ into $T$ so that the boundary of each compressing disk for $D$ is distance at least $2k + 8$ from any loop in $T$ formed by an arc of intersection between a loop $\ell_i$ and $F$. Then by Lemma~\ref{lem:punctureinclusion}, the distance in $\mathcal{AC}(F)$ from the projection of any $\ell_i$ to the projection of any $\partial D$ is at least $k + 4$, i.e.\ $d_{F}(\partial D, \ell_i) \geq k + 4$.
\end{proof}

The following Lemma shows that the distance-$k$ fillings have exactly the property we will want:

\begin{Lem}
Let $M'$ be a path manifold defined by separating loops $\ell_0$ and $\ell_d$ such that $\ell_0 \cap \ell_d$ contains strictly more than four points. Let $M$ be a closed, distance-$k$ filling of $M'$ with respect to $\ell_0$, $\ell_d$ for $k > d$. Then the Heegaard surface $\Sigma \subset M$ defined by $\Sigma \times [0,1]$ has distance $d(\Sigma) = d(\ell_0, \ell_d)$. 

Moreover, for each subsurface $F \subset \Sigma$ in the complement of $\ell_0$ or in the complement of $\ell_d$, every path in $\mathcal{C}(\Sigma)$ between disks on opposite sides of $\Sigma$ contains a vertex representing a loop that can be isotoped disjoint from $F$.
\end{Lem}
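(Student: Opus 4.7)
The upper bound $d(\Sigma)\le d(\ell_0,\ell_d)$ is immediate: the cocore of the two-handle attached along $\ell_0\times\{0\}$, extended by the vertical annulus $\ell_0\times[0,\tfrac{1}{2}]$, is a compressing disk in $H^-$ with boundary isotopic to $\ell_0$ on $\Sigma\times\{\tfrac{1}{2}\}$, so $\ell_0\in\mathcal{H}^-$; symmetrically $\ell_d\in\mathcal{H}^+$.

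The plan for the lower bound together with the moreover clause is to establish the key inequality: for every essential subsurface $F\subset\Sigma$ contained in the complement of $\ell_0$ (the case of $\ell_d$ is symmetric), $d_F(\mathcal{H}^-,\mathcal{H}^+)>k$. Given this, Lemma~\ref{lem:mustmissF} immediately yields the moreover clause, since any geodesic path from $\mathcal{H}^-$ to $\mathcal{H}^+$ has length at most $d(\Sigma)\le d(\ell_0,\ell_d)<k$. Fix $F\subset F_a$, a component of $\Sigma\setminus\ell_0$, and let $X_a$ be the associated handlebody filling. The argument proceeds in three steps. First, for any essential disk $\Delta\subset H^-$, put $\Delta$ in normal form with respect to the decomposition $H^-=C^-\cup X_a\cup X_b$ and show, using Lemma~\ref{lem:diskprojection}, that $p_F(\partial\Delta)$ (when non-empty) lies within a bounded distance of $p_F(\mathcal{H}_a)$, where $\mathcal{H}_a$ is the disk set of $X_a$. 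Here the product structure of $C^-$ on $F_a\times[0,\tfrac{1}{2}]$ transports arcs of disks in $X_a$ to arcs on $\Sigma\times\{\tfrac{1}{2}\}$ preserving their projection to $F$. Second, the distance-$k$ filling condition with $\ell_i=\ell_d$, combined with the hypothesis $|\ell_0\cap\ell_d|>4$ (which forces $\ell_d\cap F_a$ to contain at least three essential arcs so that $p_{F_a}(\ell_d)$ is non-empty), gives $d_{F_a}(\partial D,\ell_d)>k+4$ for every $D\in\mathcal{H}_a$, and hence $d_F(\partial D,\ell_d)$ is large. Third, a symmetric analysis on the $\ell_d$ side, using the compression body $C^+$ and the fact that disks in $X_a^d$ and $X_b^d$ have boundaries disjoint from $\ell_d$ in $\Sigma$, controls $p_F(\mathcal{H}^+)$, showing it lies within bounded distance of $p_F(\ell_d)$. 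Combining these three estimates gives the key inequality.

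For the distance formula I would apply the key inequality to the disconnected subsurface $F=F_a\sqcup F_b=\Sigma\setminus N(\ell_0)$ via a component-wise version of Lemma~\ref{lem:mustmissF}: the bounds on both $d_{F_a}$ and $d_{F_b}$ force any geodesic path from $\mathcal{H}^-$ to $\mathcal{H}^+$ to contain a loop disjoint from both $F_a$ and $F_b$, which can only be $\ell_0$ itself. Symmetrically $\ell_d$ lies on the path, and the subpath from $\ell_0$ to $\ell_d$ yields $d(\Sigma)\ge d(\ell_0,\ell_d)$, completing the equality. The main technical obstacle is step (i) of the key inequality: the hypothesis of Lemma~\ref{lem:diskprojection} does not apply directly to $X_a$ with the subsurface $F\subset F_a$, since $F_a$ is not essential in $\bar F_a=\partial X_a$ (its boundary $\partial F_a=\partial E_a$ bounds the capping disk $E_a$). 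Overcoming this requires a careful normal-form argument that decomposes an arbitrary essential disk $\Delta\subset H^-$ into pieces inside $X_a$, $X_b$, and the compression body, and reduces the projection analysis to a setting where the diameter bound of Lemma~\ref{lem:diskprojection} does apply.
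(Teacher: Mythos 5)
Your overall strategy matches the paper's: establish a lower bound on $d_F(\mathcal{H}^-,\mathcal{H}^+)$ for subsurfaces $F$ in the complement of $\ell_0$ or $\ell_d$, then invoke Lemma~\ref{lem:mustmissF}. But there are two genuine gaps.

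First, the step you flag as the ``main technical obstacle'' is indeed the heart of the lemma, and you have not closed it. The paper does so with a concrete outermost-disk argument: isotope $D^-$ transverse to $D_0$ (the core disk of the two-handle, $\partial D_0 = \ell_0$), take an outermost disk $E^- \subset D^-$ cut off by an arc of $D^- \cap D_0$, and surger $D_0$ along $E^-$ to get a new essential disk $D_0'$ whose boundary lies entirely in one component $F$ of $\Sigma\setminus\ell_0$. Then $\partial D_0'$ is either a disk to which the distance-$k$ filling condition applies, or is parallel to $D_0$ and inherits the intersection hypothesis $|\ell_0\cap\ell_d|>4$. Either way $\partial D_0'$ is within distance two of $p_F(\partial D^-)$, which gives $d_F(\partial D^-,\ell_d)>k+2$ and feeds into Lemma~\ref{lem:diskprojection}. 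Your sketch of a ``careful normal-form argument'' is gesturing at exactly this, but without this surgery the first step is unsupported.

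Second, and more seriously, the claimed ``component-wise version of Lemma~\ref{lem:mustmissF}'' does not say what you want. Knowing $d_{F_a}(\mathcal{H}^-,\mathcal{H}^+)>k$ and $d_{F_b}(\mathcal{H}^-,\mathcal{H}^+)>k$ forces a short path to contain \emph{some} loop disjoint from $F_a$ and \emph{some} loop disjoint from $F_b$, but these are generically two different vertices $\ell'_i$ and $\ell'_j$ (a loop contained in $\bar F_b$ is disjoint from $F_a$ without being $\ell_0$, and likewise on the other side). It does not force a single vertex disjoint from both. The paper confronts this directly: it allows $i\ne j$, observes that $\ell'_i$ (say $i<j$) is disjoint from $\ell_0$, and \emph{replaces} the initial segment of the path by $\ell_0,\ell'_i,\ldots,\ell'_n$, which is no longer; a symmetric replacement at the other end produces a path from $\ell_0$ to $\ell_d$ of length at most $n$, giving $n\ge d(\ell_0,\ell_d)$. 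Your ``subpath from $\ell_0$ to $\ell_d$'' does not exist in the original path; the modification step is what makes the lower bound go through.
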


Note that the Lemma does not require that the fillings be distance $k$ with respect to all the loops in the path from $\ell_0$ to $\ell_d$, only to the end vertices. We will use the more general condition later on.

\begin{proof}
We will prove the second half of the statement first. Let $D^-$ be an essential disk on the side of $\Sigma \subset M$ containing $\Sigma \times \{0\}$ and let $D^+$ be an essential disk on the other side of $\Sigma$. By construction, the loop $\ell_0$ bounds a disk $D_0$ on the same side of $\Sigma$ as $D^-$ and the loop $\ell_d$ bounds a disk $D_d$ on the same side as $D^+$.

Assume $\partial D^-$ has been isotoped to intersect $\ell_d$ and $\ell_0 = \partial D_0$ minimally, then made transverse to $D_0$ so that $D^- \cap D_0$ is a (possibly empty) collection of arcs properly embedded in the two disks. 

If $D_0 \cap D^-$ is non-empty, let $E^- \subset D^-$ be an outermost disk cut off by an arc $\alpha$ of $D^- \cap D_0$ and let $E_0 \subset D_0$ be one of the two components of $D_0 \setminus \alpha$. Let $D'_0$ be the result of isotoping the union $E^- \cup E_0$ disjoint from $E^-$ and $D_0$. Note that $D'_0$ will, in general, intersect $D^-$ non-trivially, but it is disjoint from $D_0$. If we let $\beta$ be the arc $E \cap \Sigma$ and let $F$ be the component of $\Sigma \setminus \ell_0$ that contains $\beta$ then $\partial D_0$ will be contained in $F$ and disjoint from $\beta$. Thus $d_F(\partial D^-, \partial D'_0) = 1$.

On the other hand, if the intersection $D_0 \cap D^-$ is empty then either $\partial D^-$ is an essential loop in a component $F$ of $\Sigma \setminus \ell_0$ or $D^-$ is parallel to $D_0$. In either case, define $D'_0 = D^-$ and we find that $d_F(\partial D^-, \partial D'_0) = 0$.

If $D'_0$ is parallel to $D_0$ (which is the case when $D^-$ is parallel to $D_0$) then by assumption, $D'_0 \cap \ell_d = D_0 \cap \ell_d > 4$. Otherwise, because $M$ is a distance $k$ filling of $M'$, we have $d_F(\partial D'_0, \ell_d) > k + 4$. Since $\partial D'_0$ is disjoint from a component of $\partial D^- \cap F$ and any two components of $\partial D^- \cap F$ are disjoint, the distance from any arc of $\partial D^- \cap F$ to $\partial D'_0 \cap F$ is at most two. By the triangle inequality, this implies $d_F(\partial D^-, \ell_d) > k + 2$. 

In particular, since $d_F(\partial D^-, \ell_d) > 2$, the intersection $\partial D^- \cap \ell_d$ must contain strictly more than four points. Since $D^-$ was arbitrary, we conclude that any disk $D^-$ intersects either subsurface $F'$ in the complement of $\ell_k$ in strictly more than two arcs, so Lemma~\ref{lem:diskprojection} implies that the projection from the disk set on the negative side of $\Sigma$ into either component of $\Sigma \setminus \ell_d$ has diameter at most four. By a symmetric argument, we conclude that the projection from the disk set on the positive side of $\Sigma$ into either component of $\Sigma \setminus \ell_0$ has diameter at most four.

By assumption, $d_F(\partial D^-, \ell_k) > k + 4$ for any disk $D^-$ so by the triangle inequality we have $d_F(\partial D^-, \partial D^+) > k$ for each subsurface $F$ in the complement of $\ell_0$ or $\ell_k$ that intersects both $\partial D^-$ and $\partial D^+$ non-trivially. Thus if $\partial D^- = \ell'_0, \ell'_1,\ldots,\ell'_n = \partial D^+$ is a path in $\mathcal{C}(\Sigma)$ of length $n < k$ then Lemma~\ref{lem:mustmissF} implies that some loop in this path (possibly $\partial D^-$ or $\partial D^+$) is disjoint from $F$. This confirms the second half of the statement of the Lemma.

For the first half, let $\ell'_i, \ell'_j$ be the loops of the path that are disjoint from the two components of $\Sigma \setminus \ell_0$. It is possible that $i = j$, in which case we must have that $\ell'_i = \ell'_j = \ell_0$, since this is the only essential loop that can be isotoped disjoint from both of these subsurfaces. Otherwise, without loss of generality assume $i < j$. Since $\ell'_j$ is disjoint from a subsurface bounded by $\ell_0$, it is also disjoint from $\ell_0$. Since $\ell_0$ bounds the disk $D_0$, and $i > 0$, we can find a new path $\partial D_0 = \ell_0, \ell'_i,\ell'_{i+1},\ldots,\ell'_n = \partial D^+$ whose length is less than or equal to that of the original path.

A similar argument at the other end of the path implies that there is a path from $\partial D_0$ to $\partial D_d$ whose length less than or equal to the length $n$ of the original path. Thus we must have $n \geq d$. Since $D^-$ and $D^+$ were arbitrary disks on opposite sides of $\Sigma$, this implies that $d(\Sigma) = d(\partial D_0, \partial D_d) = d$.
\end{proof}

\begin{Coro}
\label{coro:filledist}
If $M$ is a closed, distance-$k$ filling with respect to $\ell_0$, $\ell_d$ of a path manifold $M'$ defined by the endpoints $\ell_0$, $\ell_d$ of a $(d,k)$ flexipath $\ell_0,\ldots,\ell_d$ such that $d$ is even and $k > d \geq 4$ then the Heegaard surface $\Sigma$ for $M$ induced by this construction has distance $d(\Sigma) = d$.
\end{Coro}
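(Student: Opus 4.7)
The plan is to apply the lemma immediately preceding this corollary as a black box, which reduces the statement to verifying its three hypotheses: (i) $\ell_0$ and $\ell_d$ are separating, (ii) $|\ell_0 \cap \ell_d| > 4$, and (iii) $k > d$. Hypothesis (iii) is immediate from the statement. For (i), note that clause (1) of the $(d,k)$ flexipath definition, applied to the even indices $0$ and $d$, says that each of $\ell_0$, $\ell_d$ bounds a once-punctured torus in $\Sigma$, so these loops are separating.

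The main obstacle is verifying (ii), which I plan to extract from the subsurface projection distance in $F_1$. By Lemma~\ref{lem:flexipath2}, taking $i = 1$, $h = 0$, $j = d$ (valid since $d \geq 4$ puts $1$ strictly between $0$ and $d$), we have $d_{F_1}(\ell_0, \ell_d) > k > d \geq 4$. Because $\ell_0$ is disjoint from $\ell_1$, it lies entirely in $F_1$, so every point of $\ell_0 \cap \ell_d$ appears as an intersection of $\ell_0$ with an arc or loop of $\ell_d \cap F_1$. I then invoke the standard (logarithmic) bound that distance in $\mathcal{AC}(F_1)$ between two essential curves or arcs is dominated by a function of their geometric intersection number, so $d_{F_1}(\ell_0, \ell_d) > 4$ forces $|\ell_0 \cap \ell_d| > 4$. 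This intersection estimate is the only non-formal step in the argument.

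With all three hypotheses verified, the preceding lemma yields $d(\Sigma) = d(\ell_0, \ell_d)$. Finally, the last lemma of Section~\ref{sect:geodesics}, which states that any $(d,k)$ flexipath with $k \geq d$ is a geodesic from $\ell_0$ to $\ell_d$ in $\mathcal{C}(\Sigma)$, gives $d(\ell_0, \ell_d) = d$, and therefore $d(\Sigma) = d$, as claimed.
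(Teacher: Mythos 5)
The paper states this Corollary without proof, treating it as an immediate consequence of the Lemma directly preceding it (which computes $d(\Sigma) = d(\ell_0, \ell_d)$ under three hypotheses) together with the last unnumbered Lemma of Section~\ref{sect:geodesics} (which identifies the flexipath as a geodesic when $k \geq d$). Your overall strategy --- verify the three hypotheses, apply the Lemma, then apply the geodesic Lemma --- is exactly the route the paper intends, and you have correctly identified the one nontrivial hypothesis: that $\ell_0 \cap \ell_d$ contains strictly more than four points.

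The gap is in how you extract this intersection bound. You write that ``$d_{F_1}(\ell_0, \ell_d) > 4$ forces $|\ell_0 \cap \ell_d| > 4$'' by the standard logarithmic bound, but that bound (in its usual form, $d \leq 2\log_2 i + 2$) only gives $i \leq 4 \Rightarrow d \leq 6$, which does not contradict $d_{F_1}(\ell_0,\ell_d) > 4$ --- nor even $d_{F_1}(\ell_0,\ell_d) > 5$, the most one gets from $d_{F_1}(\ell_0,\ell_d) > k$ with $k \geq d+1 \geq 5$ as allowed by the Corollary's hypothesis. So with the logarithmic bound and the stated range $k > d \geq 4$, the case $|\ell_0 \cap \ell_d| = 4$ is not actually ruled out. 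To close this, you need either a tighter surgery-type bound (e.g.\ a linear bound $d_{\mathcal{AC}}(\gamma, \alpha) \leq i(\gamma,\alpha)+1$ applied to $\gamma = \ell_0$ and a minimally-intersecting arc $\alpha$ of $p_{F_1}(\ell_d)$, which gives $i \leq 4 \Rightarrow d_{F_1} \leq 5$, contradicting $d_{F_1} \geq 6$), or to note that in the paper's applications one always takes $k = \max\{d+1,9\} \geq 9$ so $d_{F_1}(\ell_0,\ell_d) > 9$ and the logarithmic bound does suffice. Either fix is legitimate, but as written the step ``$d_{F_1} > 4 \Rightarrow |\ell_0 \cap \ell_d| > 4$'' is a non sequitur from the cited estimate, and this is precisely the one place where the Corollary does not follow formally. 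Everything else --- the separating property from clause (1) of the flexipath definition, the appeal to Lemma~\ref{lem:flexipath2}, and the reduction of $d(\ell_0,\ell_d)$ to $d$ via the geodesic Lemma --- is correct.
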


\section{Flat surfaces}
\label{sect:flat}

Let $M$ be a three-manifold that results from a distance $k$ filling of a path manifold $M'$ defined by a $(d,k)$ flexipath $\ell_0,\ldots,\ell_d$. Recall that $M$ is the union of $\Sigma \times [0,1]$ with a two-handle and two handlebodies glued onto each boundary surface $\Sigma \times \{0\}$, $\Sigma \times \{1\}$. One of the two-handles is attached along the loop $\ell_0 \times \{0\}$ and thus contains a disk $V_0$ whose boundary is $\ell_0 \times \{0\}$. Similarly, there is a disk $V_d$ with boundary $\ell_d \times \{1\}$ in the two-handle attached to $\Sigma \times \{1\}$.
\begin{wrapfigure}{R}{.35\textwidth}
  \label{fig:verticalabels}
  \ffigbox[\FBwidth]{\caption{}}
    {\includegraphics[width=\textwidth]{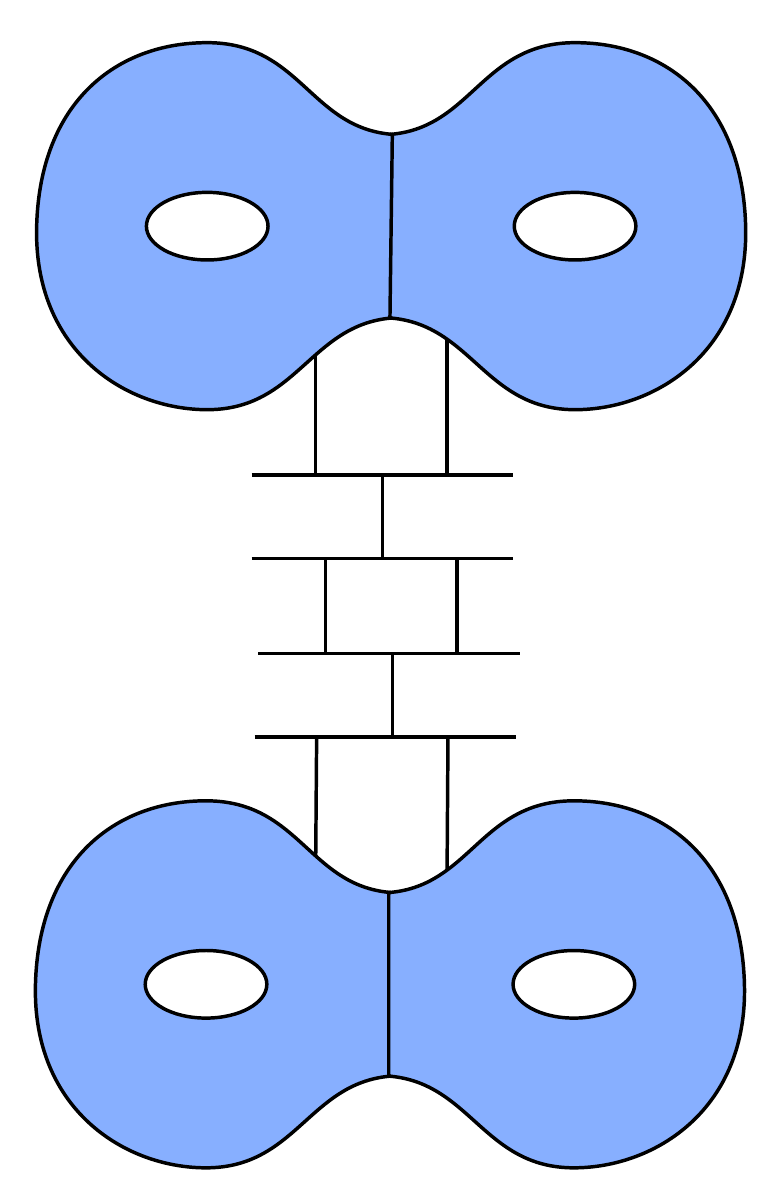}
    \put(-60,30){$V_0$}
    \put(-70,60){$V_1$}
    \put(-60,77){$V_2$}
    \put(-68,92){$V_3$}
    \put(-60,107){$V_4$}
    \put(-68,126){$V_5$}
    \put(-60,150){$V_6$}
    \put(-93,71){$t_1$}
    \put(-93,86){$t_2$}
    \put(-93,102){$t_3$}
    \put(-93,116){$t_4$} }
  \vspace{-15pt}
\end{wrapfigure}

The union of the two-handle attached along $\ell_0 \times \{0\}$ and the handlebodies used to fill the adjacent boundary components of $M'$ form a handlebody $H^- \subset M$. The second two-handle and the remaining two filling handlebodies form a second handlebody $H^+ \subset M$ such that $M = H^- \cup (\Sigma \times [0,1] \times H^+$. In particular, the disk $V_0$ is properley embedded in $H^-$, while the disk $V_d$ is properly embedded in $H^+$.

We will construct an incompressible surface in $M$ by connecting $V_0$ to $V_d$ by a collection of vertical and horizontal surfaces in $\Sigma \times [0,1]$ defined by the flexipath. For each $i < d$, define $t_i = \frac{i}{d-1}$ so that $t_0,t_1,\ldots,t_{d-1}$ is an increasing sequence of points with $t_0 = 0$ and $t_{d-1} = 1$. By construction, each $\ell_i$ with $i$ even is a separating loop in $\Sigma$. For these values of $i$ strictly between $0$ and $d$, define $V_i = \ell_i \times [t_{i-1}, t_i]$.

For each odd value of $i$, let $N \subset \Sigma$ be the closure of a regular neighborhood of $\ell_i$ and let $L_i$ be the union of the two boundary loops of $N$. For these values of $i$, define $V_i = L_i \times [t_{i-1}, t_i]$. To keep the notation consistent, we will define $L_i = \ell_i$ when $i$ is even. Thus each $V_i$ is either a disk (for $i = 0$ and $i = d$), an annulus (for other even values of $i$) or a pair of annuli (for odd $i$). These will be the vertical parts of the closed surface we are about to construct. They're shown schematically in Figure~\ref{fig:verticalabels}.

For each even value of $i < d$, the surface $\Sigma \times \{t_i\}$ will intersect $V_i$ in the loop $\ell_i \times \{t_i\}$ and will intersect $V_{i+1}$ in two loops parallel to $\ell_{i+1} \times \{t_{i+1}\}$. By construction, $\ell_i$ bounds a once-punctured torus containing $\ell_{i+1}$ so these three loops in $\Sigma \times \{t_i\}$ cobound a pair-of-pants which we will call $P_i \subset \Sigma \times \{t_i\}$. This is shown in the top right of Figure~\ref{fig:flatsurface}. For odd values of $i$, the situation is similar, with a pair-of-pants $P_i \subset \Sigma \times \{t_i\}$ cobounded by $\ell_{i+1}$ and the two loops parallel to $\ell_i$.
\begin{figure}[htb]
  \begin{center}
  \includegraphics[width=4.5in]{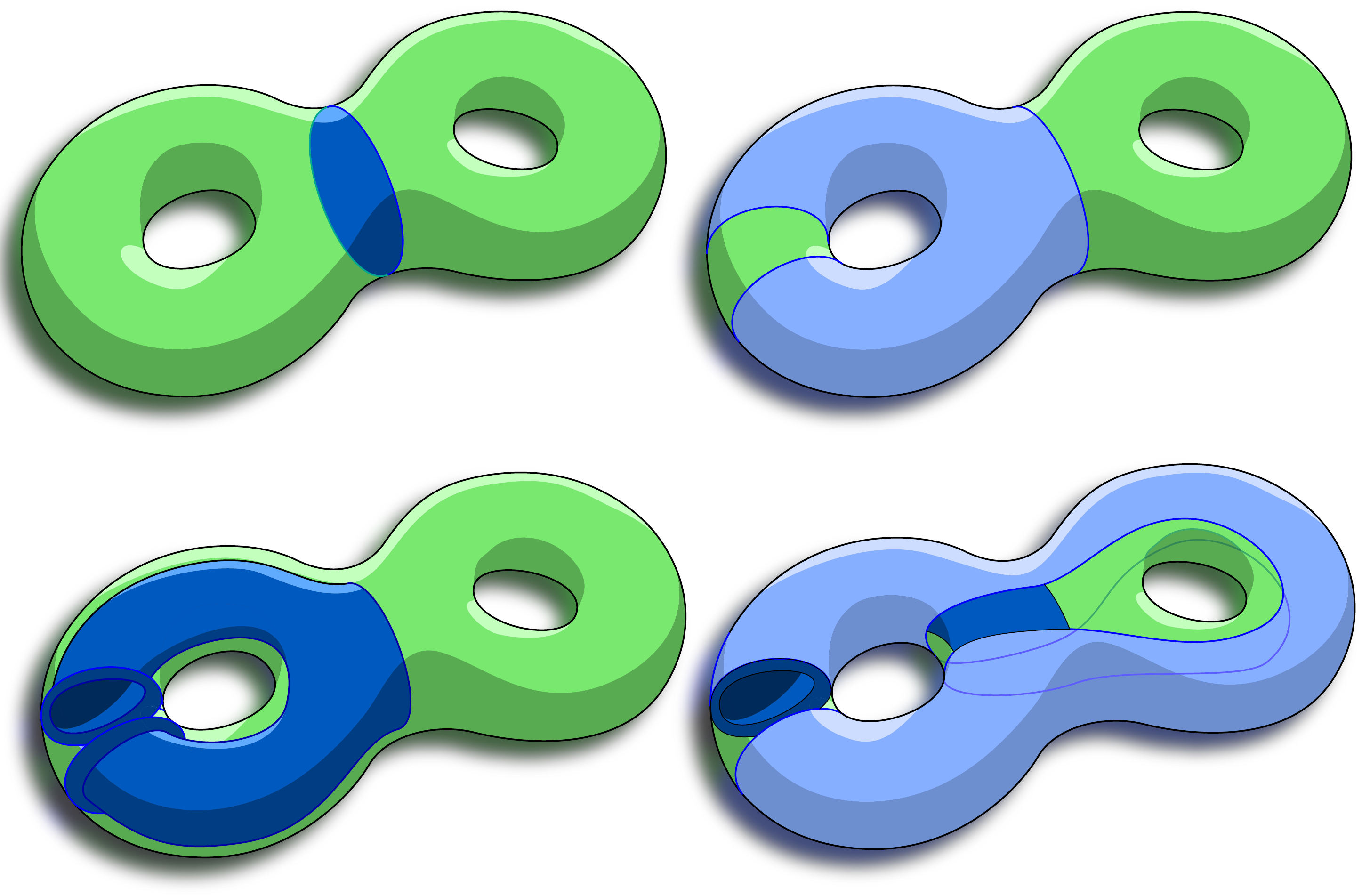}
  \put(-240,130){$V_0$}
  \put(-95,115){$P_0$}
  \put(-245,10){$S_1$}
  \put(-80,20){$P_1$}
  \caption{The intermediate surfaces leading up to $S$.}
  \label{fig:flatsurface}
  \end{center}
\end{figure}

\begin{Def}
The union $S$ of all the surface $\{V_i\}$, $\{P_i\}$ will be called the \textit{flat surface} defined by the flexipath $\ell_0,\ldots,\ell_d$.
\end{Def}

The flat surface is shown schematically in Figure~\ref{fig:frombothsides}, above the flexipath $\ell_0,\ldots,\ell_d$ used to define it. Notice that each pair-of-pants corresponds to an edge in the curve complex.
\begin{figure}[htb]
  \begin{center}
  \includegraphics[width=4.5in]{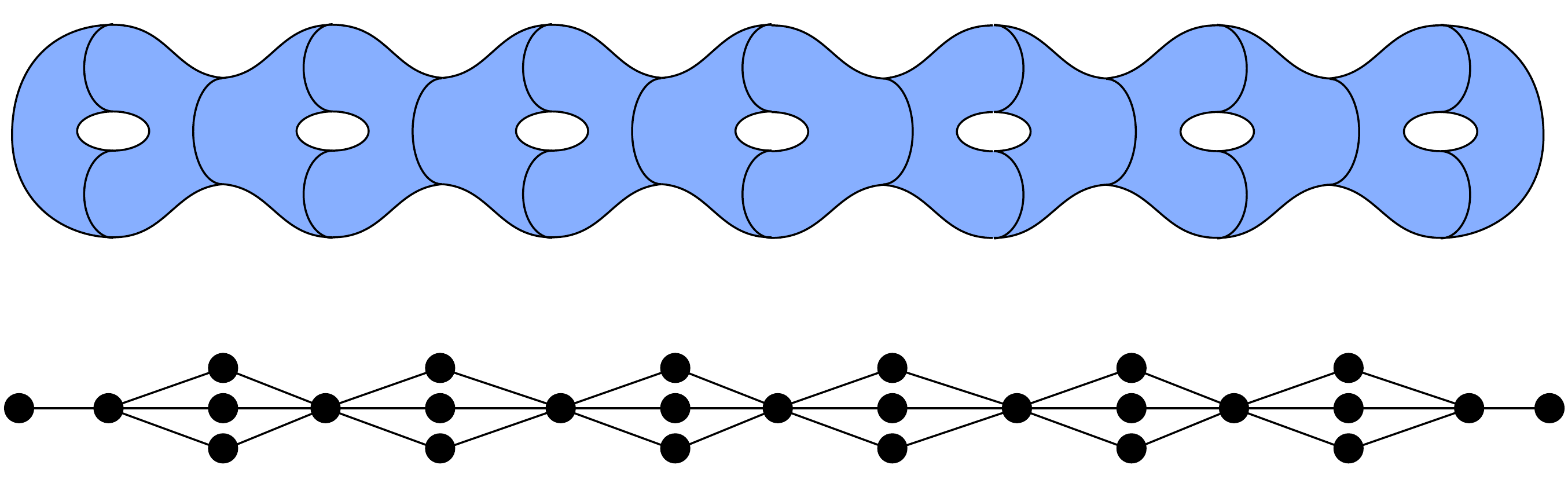}
  \caption{The flat surface $F$ follows the path used to define it.}
  \label{fig:frombothsides}
  \end{center}
\end{figure}

In order to understand the flat surface $S$, we will look at the intermediate surfaces $S_i = V_0 \cup P_0 \cup V_1 \cup P_1 \cup \cdots \cup V_i$. Note that for $i < d$, this surface always stops at a vertical annulus or annuli, as in the bottom left of Figure~\ref{fig:flatsurface}, so $S_i$ will be properly embedded in a handlebody $H_i = H^- \cup (\Sigma \times [0,t_i])$ bounded by $\Sigma \times \{t_i\}$.

We will begin by considering the complement in $H_i$ of $S_i$, each component of which is a union of products of the form $F \times [t_j, t_{j+1}]$, glued along subsurfaces of their horizontal boundaries.

\begin{Lem}
\label{lem:primitivegluing}
Let $X = X_0 \cup X_1$ be the union of two handlebodies $X_0$, $X_1$ glued along subsurfaces $F_0 \subset \partial X_0$ and $F_1 \subset \partial X_1$. Assume there is a collection of essential disks $E_0,\dots E_k \subset X_0$, each of which intersects $F_0$ in a single arc such that the complement in $F_0$ of a regular neighborhood of these arcs is a collection of one or more disks. Then $X$ is a handlebody.
\end{Lem}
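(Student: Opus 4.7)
The strategy is to use the disks $E_i$ together with the disk decomposition of $F_0$ to realize $X$ as a union of handlebodies glued along disks, and then to invoke the standard fact that such a gluing is itself a handlebody.

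After an isotopy inside the handlebody $X_0$, assume that $E_0, \ldots, E_k$ are pairwise disjoint. Let $B_i = N(E_i) \cong D^2 \times [-1,1]$ be disjoint regular neighborhoods; each $B_i$ is a $3$-ball. The complement $X_0 \setminus \bigsqcup_i \mathrm{int}(B_i)$ is $X_0$ cut along the essential disks $E_i$, hence a disjoint union of handlebodies $Y_1, \ldots, Y_m$. Thus $X_0 = Y_1 \cup \cdots \cup Y_m \cup B_0 \cup \cdots \cup B_k$, with each $B_i$ glued to the (one or two) adjacent $Y_p$ along the pair of disks $E_i^{\pm} = E_i \times \{\pm 1\} \subset \partial B_i$.

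Turn to $F_0 \subset \partial X_0$. Each arc $\alpha_i = E_i \cap F_0$ sits in $\partial B_i$ as the core of a rectangular sub-disk $N(\alpha_i) = \alpha_i \times [-1,1]$, and by hypothesis the complement $F_0 \setminus \bigsqcup_i N(\alpha_i)$ is a disjoint union of disks $D_1, \ldots, D_n$, each contained in the boundary of some $Y_p$. Thus when $X_1$ is glued onto $X_0$ along $F_0 = F_1$, the handlebody $X_1$ is attached to the $Y_p$ along the disks $D_j$ and to the $B_i$ along the rectangular disks $N(\alpha_i)$. In summary, $X$ is assembled from the connected handlebodies $X_1, Y_1, \ldots, Y_m, B_0, \ldots, B_k$ by identifying disjoint pairs of disks on their boundaries. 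Forming the gluing graph with these as vertices and the identification disks as edges, a spanning tree of this (connected) graph produces $X$ by iterated boundary connected sums, yielding a single handlebody; each remaining edge then identifies two disks on the already-built handlebody, which is precisely the attachment of a $1$-handle. Both operations preserve the handlebody property, so $X$ is a handlebody.

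The main obstacle is the preliminary step of making the $E_i$ pairwise disjoint while preserving the hypothesis. An innermost-arc surgery on $E_j$ using an outermost sub-disk $\Delta \subset E_i$ bounded by an arc $\beta \subset E_i \cap E_j$ and a boundary arc $\beta' \subset \partial E_i$ may, when $\beta'$ meets $\alpha_i$, alter the arc $\alpha_j$ on $F_0$ by concatenating a parallel copy of part of $\alpha_i$. One must therefore track how the arc system $\{\alpha_i\}$ on $F_0$ evolves under surgery and check that the cellular condition is maintained. This is bookkeeping rather than a deep difficulty: inducting on the total intersection number, choosing outermost disks whose boundary arc lies in $\partial E_i \setminus \alpha_i$ when available and otherwise using that the concatenated arcs are isotopic in $F_0$ to unions of original arcs, one reaches a disjoint configuration to which the main argument applies.
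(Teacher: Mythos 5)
The route you take is genuinely different from the paper's. The paper thickens $F_0 \cup E_0 \cup \cdots \cup E_k$ to a regular neighborhood $X'$, observes that $X'$ is a product collar $F_0 \times [0,1]$ whose frontier $D$ in $X_0$ is a union of disks (this is where the ``complement is disks'' hypothesis enters), notes $X' \cup_{F_0} X_1 \cong X_1$, and then glues the remaining handlebodies $\overline{X_0 \setminus X'}$ back in along the disks of $D$. You instead cut $X_0$ along the $E_i$ and try to reassemble $X$ from the pieces $X_1, Y_p, B_i$ via a gluing graph.

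There is a real gap in the step where you claim $X$ is assembled ``by identifying disjoint pairs of disks on their boundaries.'' The identification regions are not disjoint on any of the pieces. On $\partial X_1$ the disks $D_j$ and $N(\alpha_i)$ tile the subsurface $F_1$, so adjacent tiles share boundary arcs (the parallel copies of $\alpha_i$); on $\partial B_i$ (a sphere) the three glued pieces $E_i^-$, $E_i^+$, $N(\alpha_i)$ share the arcs $\alpha_i \times \{\pm 1\}$; and on $\partial Y_p$ the disks $D_j$ and $E_i^\pm$ are likewise adjacent along arcs. Because the gluing loci overlap along their boundaries, the assembly is not a sequence of boundary connected sums and $1$-handle attachments in the sense your spanning-tree argument requires, and that template genuinely can fail: gluing a handlebody to itself along two disks that share boundary (so that their union is, say, an annulus) need not produce a handlebody. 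This is precisely where the ``complement of the arcs is a union of disks'' hypothesis must be used, and your argument never invokes it in a load-bearing way. One way to repair it: first glue $X_1$ to the $Y_p$ only along the $D_j$ (these \emph{are} disjoint on each piece, so this yields a handlebody $Z$), then observe that each $B_i$ is a $3$-ball whose entire attaching region $E_i^- \cup N(\alpha_i) \cup E_i^+$ is a single disk on $\partial B_i$ and becomes a single disk on $\partial Z$ once the adjacent $D_j$ are glued; attaching a ball along one boundary disk is a homeomorphism. As written, though, the proof does not establish the lemma.

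A lesser point: your concern about making the $E_i$ pairwise disjoint is legitimate — the paper's own proof also implicitly assumes disjointness, and in the one place the lemma is applied (Corollary~\ref{lem:newsurfacesincomp}) the disks $\alpha_i \times [t_{i-1}, t_i]$ are visibly disjoint — but the surgery sketch you give for the general case is vague about why the cellular condition on $F_0$ survives, and in any case the harder gap above should be addressed first.
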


This is a generalization of a well known result that gluing two handlebodies along annuli such that one of the annuli is primitive in its respective handlebody, produces a new handlebody. In particular, this special case occurs when $F_0$ is an annulus and there is a single disk $E_0$.

\begin{proof}
Let $X' \subset X$ be the closure of a regular neighborhood in $X_0$ of $F_0 \cup E_0 \cup \ldots \cup E_k$. Then the intersection of $X'$ with the closure of its complement $X_0 \setminus X'$ is a surface $D'$ that results from isotoping the interior of $F_0$ into the interior of $X_0$, then boundary compressing the resulting surface $F'$ along the disks $E_0,\ldots,E_k$. In other words, each disk $E_0$ intersects $F'$ in a properly embedded arc and we construct $D'$ by replacing a regular neighborhood in $F'$ of each of these arcs with two disks parallel to $E_i$, as in Figure~\ref{fig:gluehbodies}.
\begin{figure}[htb]
  \begin{center}
  \includegraphics[width=3.5in]{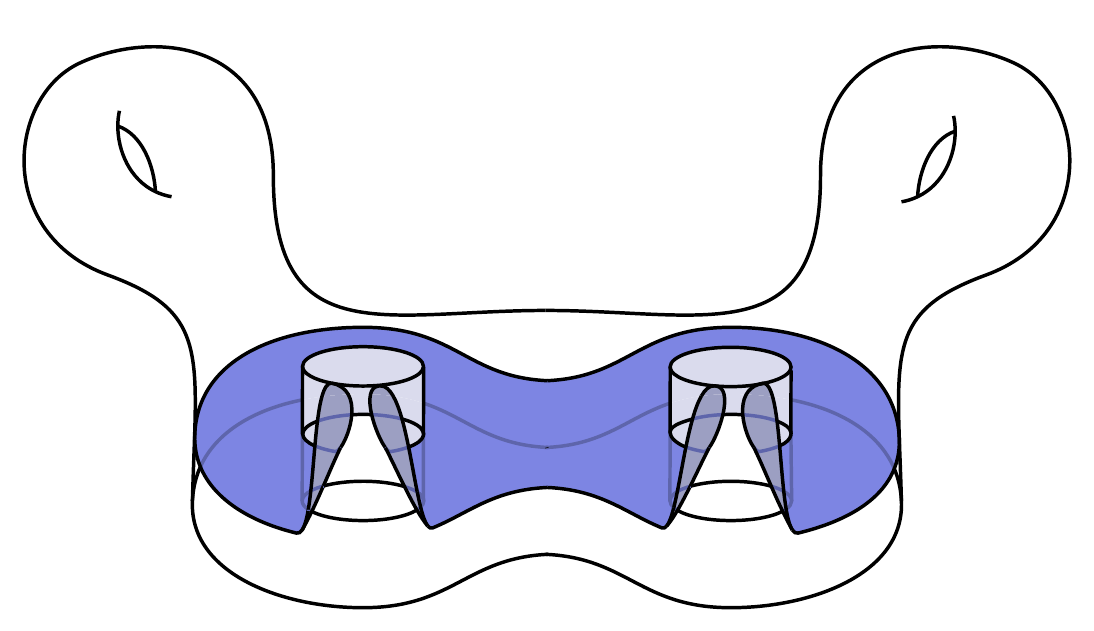}
  \caption{The surface $D$ defined by a regular neighborhood of $F_0 \cup E_0 \cup \ldots \cup E_k$.}
  \label{fig:gluehbodies}
  \end{center}
\end{figure}

By construction, the surface $D$ is properly embedded in $X$ and homeomorphic to the complement in $F_0$ of a regular neighborhood of the arcs $E_i \cap F_0$. This complement is a collection of disks by assumption.

We can form $X$ by gluing $X'$ to $X_1$ along $F_0$, then gluing the closure of $X_0 \setminus X'$ to this union along $D$. By construction, $X_0$ is homeomorphic to $F_0 \times [0,1]$ by a map that sends $F_0$ to $F_0 \times \{0\}$. Thus if we glue $X'$ to $X_1$ along $F_0$, the resulting space will be homeomorphic to $X_1$, i.e.\ a handlebody. If we then glue $X_0 \setminus X'$ to this handlebody along the collection of disks $D$, the resulting space will be a new handlebody homeomorphic to $X$.
\end{proof}

\begin{Coro}
\label{lem:newsurfacesincomp}
For $i \leq d$, the closure of each component of $H_i \setminus S_i$ is a handlebody.
\end{Coro}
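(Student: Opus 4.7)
The plan is to induct on $i$, with Lemma~\ref{lem:primitivegluing} doing the work at each step. The base case $i = 0$ is immediate: since $\ell_0 \subset \Sigma$ is separating, $V_0 \subset H^-$ is a properly embedded separating disk, so $H^- \setminus V_0$ consists of two handlebody components.

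For the inductive step from $i-1$ to $i$, I would first note that $(\Sigma \times [t_{i-1}, t_i]) \setminus V_i$ consists of two handlebodies, each of the form $F \times [t_{i-1}, t_i]$ for a compact subsurface $F \subset \Sigma$: when $i$ is even, $V_i$ is a single vertical annulus separating the slab into products over the two sides of $\ell_i$; when $i$ is odd, $V_i$ is a pair of parallel vertical annuli cutting off $N(\ell_i) \times [t_{i-1}, t_i]$ from $(\Sigma \setminus N(\ell_i)) \times [t_{i-1}, t_i]$. Each component of $H_i \setminus S_i$ then arises by gluing one of these product handlebodies along a subsurface $G \subset \Sigma \times \{t_{i-1}\}$ to a component of $H_{i-1} \setminus S_{i-1}$, which is a handlebody by the inductive hypothesis. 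The possible gluing subsurfaces are the two connected components of $\Sigma \times \{t_{i-1}\} \setminus (L_{i-1} \cup L_i \cup P_{i-1})$: an annular neighborhood of whichever of $\ell_{i-1}, \ell_i$ is non-separating, and a genus $g - 1$ surface with a single boundary circle obtained as the other side of whichever of $\ell_{i-1}, \ell_i$ is separating.

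To invoke Lemma~\ref{lem:primitivegluing}, I need essential disks in each product handlebody $F \times [t_{i-1}, t_i]$ whose intersections with $G$ are arcs that cut $G$ into disks. For any properly embedded arc $\alpha \subset F$ whose intersection with $G$ is a single arc, the vertical extrusion $\alpha \times [t_{i-1}, t_i]$ is such an essential disk, since its intersection with $G$ is precisely $(\alpha \cap G) \times \{t_{i-1}\}$. I would therefore choose a system of arcs in $G$ that cut it into disks---a single meridional arc when $G$ is annular, and a standard arc system when $G$ is the genus $g-1$ subsurface---and extend each arc across the complementary region $F \setminus G$ (which is either empty or the pair-of-pants $P_{i-1}$) to endpoints on $\partial F$. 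The top case $i = d$, where $V_d$ lies in the 2-handle attached to $\ell_d \times \{1\}$ rather than in a vertical slab, is handled analogously: cutting that 2-handle along $V_d$ produces a pair of 3-balls, each glued to an existing handlebody component along a disk, so the conclusion follows from Lemma~\ref{lem:primitivegluing} with the trivial arc system. The main obstacle is the combinatorial bookkeeping needed to track which product piece is glued to which component of $H_{i-1} \setminus S_{i-1}$ across the different parity cases; the essential geometric input is just the arc-extrusion construction feeding into Lemma~\ref{lem:primitivegluing}.
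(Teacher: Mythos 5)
Your inductive argument for $i < d$ matches the paper's proof exactly: same base case, same decomposition of each complementary component as a component of $H_{i-1}\setminus S_{i-1}$ glued to a product handlebody $F\times[t_{i-1},t_i]$ along the connecting surface, and the same use of vertical extrusions $\alpha\times[t_{i-1},t_i]$ of a suitable arc system to satisfy the hypotheses of Lemma~\ref{lem:primitivegluing}. Your identification of the two possible connecting surfaces (an annular neighborhood of the non-separating loop, or the genus $g-1$ side of the separating loop) is also correct.

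The one place you go beyond the paper is the case $i = d$, and there the argument does not hold up. The paper's own proof ends with ``this proves the Lemma for every value $i < d$,'' and the remark immediately afterward states explicitly that the argument does not apply to $S_d$, because the complementary components of $S_d = S$ in $M$ are obtained from those of $S_{d-1}$ by gluing on handlebodies (the filling handlebodies, not merely the two-handle) along arbitrary subsurfaces of their boundaries, which is not a primitive gluing. Your treatment of $i = d$ only accounts for the two-handle attached along $\ell_d\times\{1\}$ and omits the two filling handlebodies glued to the remaining boundary components of $\Sigma\times\{1\}$; implicitly you are taking $H_d$ to be $H_{d-1}$ union the two-handle, which has extra boundary components and is not what the later sections use. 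Moreover the claim cannot be true for the closed surface $S$ in $M$: Corollary~\ref{coro:incompsurf} shows $S$ is incompressible of positive genus, so the closures of the components of $M\setminus S$ cannot be handlebodies. The ``$i \le d$'' in the statement is almost certainly a typo for ``$i < d$''; with that reading your proof is correct and is the same as the paper's.
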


\begin{proof}
We will prove this by induction on $i$. For the base case $i = 0$, $S_0$ is a separating disk in $H_0 = H^-$ that cuts $H_0$ into two handlebodies. For the inductive step $i > 0$, each component of $H_i \setminus S_i$ is the union of a component of $H_{i-1} \setminus S_{i-1}$ with the set $F \times [t_{i-1}, t_i]$ for $F$ a component of $\Sigma \setminus L_i$. These two sets intersect along the subsurface $F' \times \{t_{i-1}\}$ of $\Sigma \times \{t_{i-1}\}$ such that $F'$ is either equal to $F$ or a subsurface of $F$ bounded by $L_{i-1}$, depending on which component of $\Sigma \setminus \ell_{i+1}$ is the subsurface $F$. 

In either case, there is a collection of essential arcs $\alpha_i$ in $F$ such that each $\alpha_i$ intersects $F'$ in a single subarc (possibly all of $\alpha_i$) and the complement in $F'$ of these arcs is a single disk. Each $\alpha_i$ determines an essential disk $\alpha_i \times [t_{i-1}, t_i]$ in the handlebody $F \times [t_{i-1},t_i]$ whose boundary intersects $F'$ in a single arc so Lemma~\ref{lem:primitivegluing} implies that the union is a handlebody. By induction, this proves the Lemma for every value $i < d$. 
\end{proof}

Note that while the surface $S_d$ is also defined, the proof of Corollary~\ref{lem:newsurfacesincomp} does not apply to it because the complementary components of $S_d$ do not result from gluing a product to the complementary components of $S_{d-1}$. Instead, they result from gluing handlebodies to these components along arbitrary subsurfaces of their boundaries. As we will see below, the surface $S_{d+1}$ will often be incompressible.

\section{Incompressible surfaces}
\label{sect:incompressible}

At this point, it will be helpful to introduce some more terminology related to the surface $S_i$. As we saw in the proof of Corollary~\ref{lem:newsurfacesincomp}, each component $X$ of $H_i \setminus S_i$ is the union of a component $X'$ of $H_{i-1} \setminus S_{i-1}$ with a product $F \times [t_{i-1},t_i]$ along a subsurface $F' \times \{t_{i-1}\}$. We will call each $F \times [t_{i-1},t_i]$ a \textit{block at level $i$} and we will call $F' \times \{t_{i-1}\}$ the \textit{connecting surface}. The surface $F$ will be called the \textit{block surface}.
\begin{wrapfigure}{R}{.35\textwidth}
  \ffigbox[\FBwidth]{\caption{}}
    {\includegraphics[width=\textwidth]{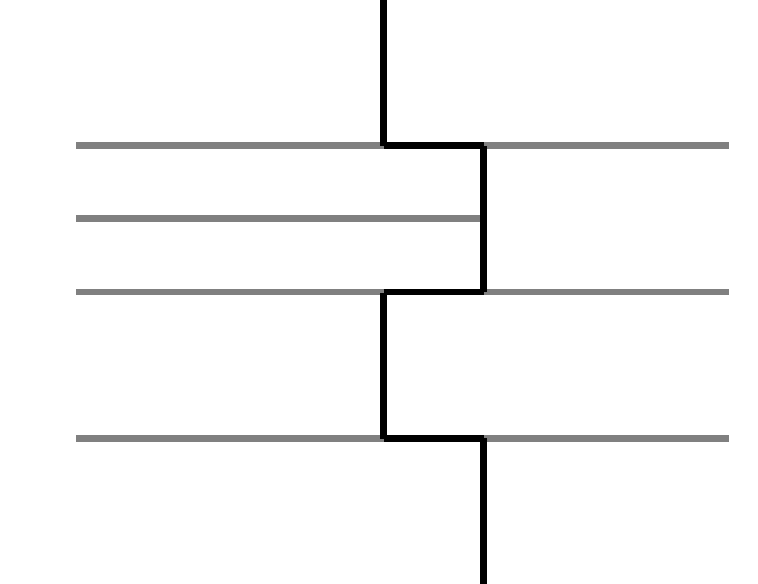}
    \put(-125,45){$F'$}
    \put(-125,57){$F$}
    \put(-105,32){outside}
    \put(-50, 32){inside}}
  \label{fig:inoutside1}
  \vspace{-5pt}
\end{wrapfigure}

The block surface is bounded by $L_i$, which is the loop $\ell_i$ when $i$ is even and two parallel copies $\ell_i$ when $i$ is odd. If $L_{i-1}$ is contained in the block surface $F$ then it is the boundary of the connecting subsurface $F'$, which will be a proper subsurface of $F$. In this case, $L_i \times \{t_i\}$ will be the boundary of the connecting surface in $F \times \{t_i\}$ for a block at level $i+1$ and we will say that $X$ is an \textit{inside block}. 

Otherwise, if $L_{i-1}$ is not contained in $F$ then the connecting surface $F' \times \{t_{i-1}\}$ will be the entire surface $F \times \{t_{i-1}\}$. Moreover, $L_{i+1}$ will be disjoint from $F$, so that $F \times \{t_i\}$ is the connecting surface at level $i+1$. In this case, we will say that $X$ is an \textit{outside block}. Note that each subset $\Sigma \times [t_{i-1}, t_i]$ contains exactly one inside block and one outside block. These are shown schematically in Figure~13, where the flat surface $S$ is indicated by the black arcs, and the connecting surfaces and one block surface are indicated in grey.

\begin{Lem}
\label{lem:newsurfincomp1}
Let $Y$ be the component of $H_0 \setminus S_0$ whose boundary contains $\ell_1 \times \{t_0\}$ and assume $\ell_1$ intersects every compressing disk for $Y$ in at least two points. Then $S_1$ is incompressible in $H_1$. Moreover, if $D$ is a compressing disk for a component of $H_1 \setminus S_1$ such that $D \cap \partial H_1$ is a single component then $D$ is on the same side of $S_1$ as the inside block at level 1 and the arc or loop of intersection is distance at most one in $F$ from the projection of a compressing disk for $H_1$.
\end{Lem}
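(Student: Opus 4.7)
The plan hinges on the observation that $S_1 = V_0 \cup P_0 \cup V_1$ is topologically an annulus with boundary $L_1 \times \{t_1\}$: capping the $\ell_0$-boundary of the pair of pants $P_0$ by the disk $V_0$ produces an annulus with boundary $L_1$, and extending by the vertical annuli $V_1 = L_1 \times [0,t_1]$ preserves the annulus topology. The core of this annulus is isotopic in $H_1 = H^-$ to $\ell_1$, so every essential loop in $S_1$ is isotopic to $\ell_1$ in $H_1$. Consequently, for incompressibility it suffices to rule out the existence of a disk $E \subset H^-$ with $\partial E$ isotopic to $\ell_1$. Since $\partial V_0 = \ell_0$ is disjoint from $\ell_1$ in $\partial H^-$, after general position $E \cap V_0$ consists only of loops, which innermost-loop surgery removes. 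The reduced $E$ lies in the handlebody $Y$ (since $\partial E \subset T \subset \partial Y$), giving a compressing disk for $Y$ disjoint from $\ell_1$ and contradicting the hypothesis.

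For the moreover statement, I would first eliminate Side 1 $= Y \cup_A (A \times [0,t_1])$. Since $A$ is a longitudinal annulus of the product solid torus $A \times [0,t_1]$, attaching it to the solid torus $Y$ along $A$ yields again a solid torus with meridian slope $\partial D_Y$. The boundary torus decomposes as $S_1 \cup (A \times \{t_1\})$ along $L_1 \times \{t_1\}$, and because $L_1$ consists of two parallel copies of $\ell_1$, the meridian crosses $L_1 \times \{t_1\}$ in $2|\partial D_Y \cap \ell_1| \geq 4$ points. The unique compressing disk up to isotopy therefore contributes at least two arcs to $A \times \{t_1\}$, so $D \cap \partial H_1$ cannot be a single component in Side 1. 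Hence $D$ lies in Side 2, the side containing the inside block $F_1 \times [0,t_1]$.

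Now let $\beta = D \cap \partial H_1 \subset F_1 \times \{t_1\}$ and $\alpha = D \cap S_1$. If $\beta$ is a single loop then $\partial D \subset \partial H_1$, making $D$ itself a compressing disk for $H_1$ whose projection is $\beta$. If $\alpha$ is boundary-parallel in $S_1$, pushing it off $S_1$ across a subdisk of $S_1$ turns $D$ into a compressing disk for $H_1$ whose projection in $F_1$ is again at distance zero from $\beta$. Otherwise $\alpha$ is an essential arc in the annulus $S_1$ connecting the two components of $L_1 \times \{t_1\}$, and I would boundary-compress $S_1$ along $D$. Removing a regular neighborhood of $\alpha$ from $S_1$ leaves a disk, and capping along the two copies $\alpha^{\pm}$ by two parallel copies of $D$ in Side 2 produces a disk $S_1' \subset H_1$ whose boundary lies in $\partial H_1$. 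Up to isotopy in $F_1 \times \{t_1\}$, the loop $\partial S_1'$ is the non-$L_1$ boundary component of a regular neighborhood in $F_1 \times \{t_1\}$ of the graph $(L_1 \times \{t_1\}) \cup \beta$, i.e., a pair-of-pants subsurface $P^* \subset F_1 \times \{t_1\}$. Since $F_1$ has positive genus, this boundary loop is essential in $F_1$ and in $\partial H_1$, so $S_1'$ is a genuine compressing disk for $H_1$. The arc $\beta$ lies in the interior of $P^*$ disjoint from $\partial S_1'$, so $d_{F_1}(\beta, \partial S_1') \leq 1$.

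The step I expect to be most delicate is verifying that $\partial S_1'$ is essential in $\partial H_1$ (not merely in $F_1$) and carefully tracking the boundary combinatorics through the boundary compression to confirm the pants-neighborhood description; once these technical details are settled, the construction directly yields a compressing disk for $H_1$ whose projection is within distance one of $\beta$.
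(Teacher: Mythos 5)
Your proof is correct, and it takes a genuinely different route from the paper's. The paper's argument is built around the vertical annulus $A = \ell_0\times[t_0,t_1]$: it isotopes $D$ to minimize $D\cap A$, then boundary-compresses $D$ along outermost pieces of $D\cap A$ to extract a compressing disk for $H_1$ disjoint from the arc $\beta$, and separately uses the vertical disk $B=\beta'\times[t_0,t_1]$ (for $\beta'$ an arc joining $\ell_0$ to $\ell_1$) to remove loops of $D\cap A$. Your argument instead exploits the fact that $S_1$ is an annulus with core isotopic in $H_1$ to $\ell_1$. For incompressibility you reduce directly to the hypothesis on $Y$; for the outside block you compute that the meridian of the solid torus meets $L_1\times\{t_1\}$ at least four times; and for the inside block you boundary-compress $S_1$ itself along $D$, producing a disk $S_1'$ whose boundary is the third boundary curve of a pants neighborhood of $L_1\cup\beta$ in $F_1$. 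This is cleaner and avoids the $A$-intersection casework entirely, but it uses the annular structure of $S_1$ in an essential way, so — unlike the paper's $A$-based scheme — it does not feed directly into the inductive argument of the next lemma, where the $S_i$ have positive genus.

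A few points where you should tighten the writing: (1) the disk you surger with in the incompressibility step should be $\hat{V}_0 = V_0\cup(\ell_0\times[0,t_1])$, the extension of $V_0$ to a properly embedded disk in $H_1$, and the component containing the reduced $E$ is $\hat Y = Y\cup(F_0\times[0,t_1])\cong Y$, not $Y$ literally; the symbol $T$ is never introduced. (2) In the case where $\alpha$ is boundary-parallel, you should note that if the resulting loop $\beta\cup\gamma$ were inessential in $\partial H_1$ then it would already bound a disk inside $\overline{F_1\times\{t_1\}}\subset\partial(\text{Side }2)$ (an essential annulus cannot lie in a disk, so the bounding disk cannot cross $N(\ell_1)\times\{t_1\}$), contradicting that $D$ is a compressing disk for Side 2; this is what makes the pushed disk a genuine compressing disk for $H_1$. (3) The essentiality of $\partial S_1'$ in $\Sigma$ (not just $F_1$) follows from the Euler characteristic count $\chi(F_1\setminus P^*)=3-2g\leq -1$ together with the observation that $P^*\cup N(\ell_1)$ is a once-punctured torus, so $\partial S_1'$ separates $\Sigma$ into two pieces neither of which is a disk; you flag this but it is worth recording explicitly.
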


\begin{proof}
Let $D$ be a compressing disk for a component $X$ of $H_1 \setminus S_1$ and assume $D \cap \partial H_1$ is connected. (If this intersection is empty then $D$ will be a compressing disk for $S_1$.) Let $X_1$ be the block at level one on the side of $S_1$ that contains $D$. Any arc of $\partial D$ contained in the disk $S_0$ can be isotoped out of this disk and into $P_0$, so that $\partial D$ is disjoint from the interior of $H_0$. We can further isotope $D$ so that it is transverse to the connecting surface $F'_0$ and so that any loop in $D \cap F'_0$ is essential in $\partial H_0$.

If $X_1$ is the outside block at level 1 then there is a value $\epsilon > 0$ such that $D \cap (\Sigma \times [t_1-\epsilon,t_1])$ is either empty, or isotopic to a vertical annulus $\partial D \times [t_1-\epsilon,t_1]$ or a vertical band $(\partial D \cap \partial H_1) \times [t_1-\epsilon,t_1]$. Moreover, there is an ambient isotopy of $X$, fixing $X \cap \partial H_1$, that takes $\Sigma \times [t_1-\epsilon,t_1]$ onto $X_1$ and sends the rest of $X_1$ into $X_0$. After this $D \cap X_1$ will be empty or a vertical disk or annulus and $D \cap X_0$ will be a single disk whose boundary is either disjoint from $F_1$ or intersects $F_1$ in a single loop or one arc. The boundary of such a disk intersects each component of $L_1$ at most once, contradicting our initial assumption that every disk in $H_0$ intersects $\ell_1$ in at least two points. Thus we conclude that $X_1$ must be an inside block.

Consider the vertical annulus $A = \ell_0 \times [t_0,t_1]$, which is properly embedded in $X_1$, as indicated in Figure~\ref{fig:lem20}. The boundary of $A$ separates $S_0$ from the rest of $S_1$ so if $D$ is disjoint from $A$ then $\partial D$ must be contained in the component of $X_1 \setminus A$ containing $X_0$ and thus disjoint from $S_1$. This implies that $D$ is itself a compressing disk for $H_1$ (but not for $S_1$) so $\partial D$ is distance zero from a compressing disk. Otherwise, assume $\partial D \cap H_1$ is an arc or the empty set. Then $D \cap A$ is a collection of essential loops and arcs that are parallel in $A$ into $A \cap \partial H_1$, since none of the arcs can have endpoints in $A \cap \partial H_1$. 

Assume that we have isotoped $D$ so as to minimize $D \cap A$. If $D \cap A$ contains one or more arcs then an outermost arc determines a disk $E$ such that boundary compressing $D$ across $E$ produces two disks. Because $D \cap \partial H_1$ is a single arc, one of the components $D'$ of the resulting surface is a disk with boundary entirely in $H_1$. By construction, $D'$ is disjoint from $D$. Moreover, if $D'$ is a boundary parallel disk then we can isotope $D$ so as to remove the arc that defined $E$. Thus $D'$ must be an essential disk for $H_1$, so $\alpha$ is distance at most one from the projection into $F_1$ of this disk.

This takes care of the case when $D \cap A$ contains an arc. If $D \cap A$ is a collection of essential loops then we have two more cases to consider: In the first case, if $D \cap \partial H_1$ is an arc then it is disjoint from $\partial A$, which is part of the compressing disk $S_0 \cup A$ for $H_1$, so $\alpha$ is again distance at most one from the projection of a compressing disk. 

In the second case, $D \cap A$ is a collection of essential loops. Assume we have isotoped $D$ so that $D \cap A$ is minimal. In particular, note that we can remove any trivial loops of intersection. Let $\beta$ be an arc in $F_0$ with one endpoint $v_0$ in $\ell_0$, one endpoint $v_1$ in $\ell_1$ and interior disjoint from $\ell_0$ and $\ell_1$. Define $B = \beta \times [t_0, t_1]$ and assume that $D$ has been made transverse to $B$ such that $D \cap B$ is a collection of properly embedded arcs in $B$. The endpoints of these arcs will be contained in $\beta \times \{t_0\}$ and in $v_0 \times [t_0, t_0]$, with exactly one endpoint for each loop of $D \cap A$. Two such arcs are indicated on the left in Figure~\ref{fig:lem20}. An outermost arc with both endpoints in $v_0 \times [t_0, t_1] \subset A$ will define an isotopy of $D$ that turns two loops of $D \cap A$ into a single trivial loop, contradicting minimality of $D \cap A$.
\begin{figure}[htb]
  \begin{center}
  \includegraphics[width=4.5in]{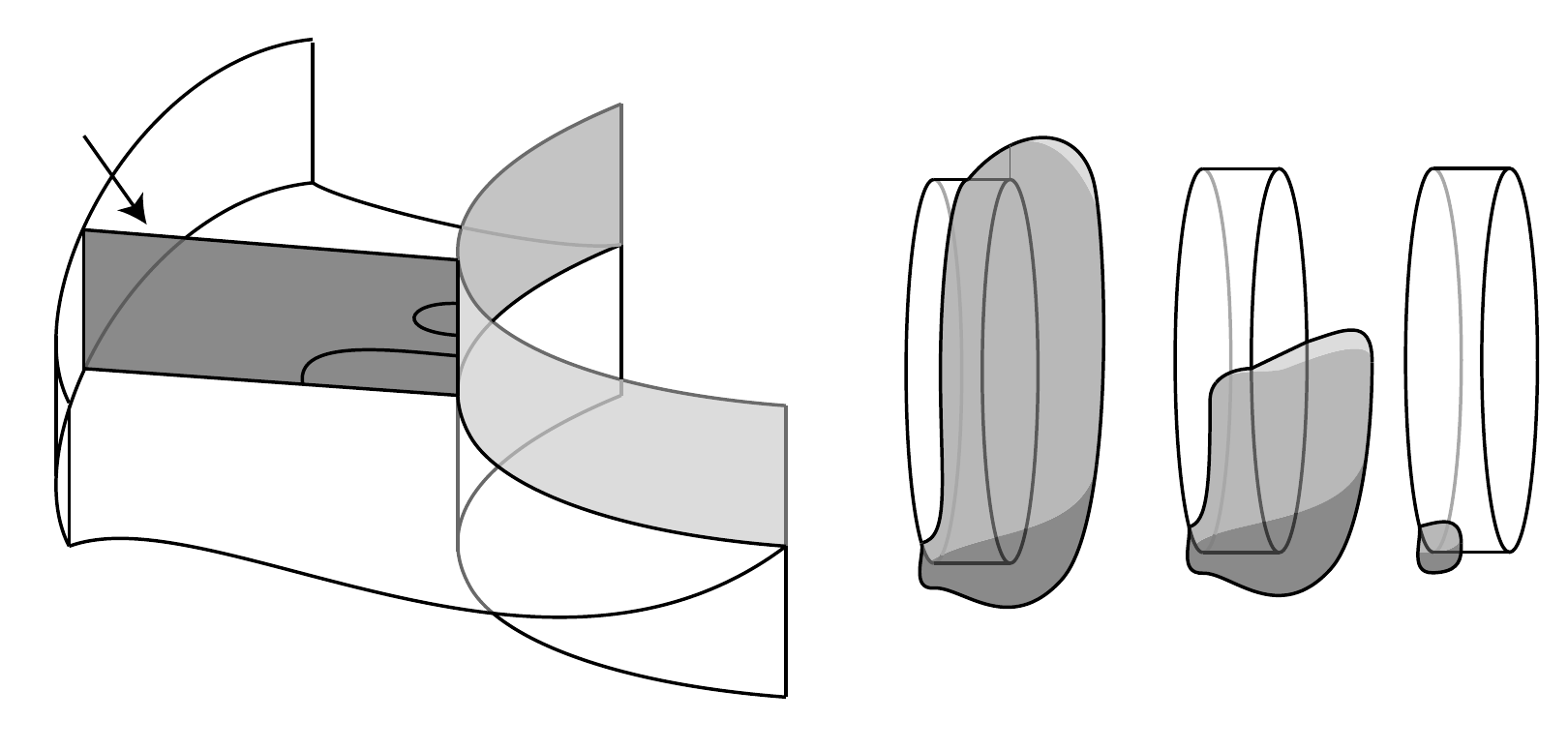}
  \put(-315,130){$B$}
  \put(-215,110){$A$}
  \put(-110,105){$D$}
  \caption{Reducing intersections between the disk $D$ and the annulus $A$.}
  \label{fig:lem20}
  \end{center}
\end{figure}

If the intersection contains an arc from $v_0 \times [t_0, t_1]$ to $\beta \times \{t_1\}$ then this arc defines an isotopy that pulls an arc of $\partial D$ across $A$ and into the disk $S_0$, as on the right in Figure~\ref{fig:lem20}. This isotopy turns a loop of $D \cap A$ into an arc that shares its endpoints with the arc that remains from $\partial D$. If we further isotope the arc $\partial D \cap S_0$ the rest of the way across $S_0$ and back into $P_0$, the resulting disk will have one fewer loops of intersection with $A$, contradicting the assumption that $D' \cap S_1$. This contradiction implies that $S_1$ is incompressible in $H_1$.
\end{proof}

For the next step, we will need the following result by Masur-Schleimer, which is part of Lemma 12.12 in~\cite{Masur-Schleimer}. As usual, $F$ will be a compact, connected, orientable surface with non-empty boundary.

\begin{Lem}[Masur-Schleimer]
\label{lem:masurschleimer}
If $D$ is an essential disk in the handlebody $F \times [0,1]$, isotoped to intersect $\partial F \times [0,1]$ minimally then each arc in the set $\partial D \cap (F \times \{0\})$ will be distance at most 6 from every arc of $\partial D \cap (F \times \{0\})$ in $\mathcal{AC}(F)$.
\end{Lem}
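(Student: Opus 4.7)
The plan is to show that the components of $\partial D \cap (F \times \{0\})$ are automatically pairwise disjoint as subsets of $F$, so that any two essential ones are already at distance at most $1$ in $\mathcal{AC}(F)$, which comfortably satisfies the claimed bound of $6$.

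First, I would observe that since $D$ is properly embedded in $F \times [0,1]$, its boundary $\partial D$ is a simple closed curve in the closed surface $\partial(F \times [0,1]) = (F \times \{0\}) \cup (\partial F \times [0,1]) \cup (F \times \{1\})$. Intersecting this simple closed curve with the subsurface $F \times \{0\}$ gives a one-dimensional submanifold whose components are therefore pairwise disjoint. Each such component is either a properly embedded arc in $F \times \{0\}$ (with endpoints in $\partial F \times \{0\}$) or a simple closed curve in the interior of $F \times \{0\}$, and any two distinct components are disjoint as subsets of $F \times \{0\} \cong F$. Inessential closed curves of intersection can be removed by an isotopy that does not increase $|\partial D \cap (\partial F \times [0,1])|$, so we may assume they do not occur; the essential closed curves, if any, are disjoint from every arc component.

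Next, I would appeal directly to the definition of $\mathcal{AC}(F)$: any collection of pairwise disjoint essential arcs and curves spans a simplex. Hence for any two essential components $\alpha,\alpha'$ of $\partial D \cap (F \times \{0\})$, we have $d_{\mathcal{AC}(F)}(\alpha,\alpha') \leq 1$, which is well within the stated bound of $6$. Components of $\partial D \cap (F \times \{0\})$ that are inessential in $F$ do not represent vertices of $\mathcal{AC}(F)$ and are excluded from the claim by the standard convention for subsurface projections. The minimization hypothesis on $|\partial D \cap (\partial F \times [0,1])|$ plays only a supporting role here: it guarantees that the set of components of $\partial D \cap (F \times \{0\})$ is well-defined up to isotopy of $F$, and that no component could be pushed off into a vertical annulus.

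The main conceptual point, and the only place where care is required, is the observation that $\partial D$ is embedded and that $F \times \{0\}$ is a genuine subsurface of $\partial(F \times [0,1])$, which together force disjointness of the components of intersection. There is no real obstacle to overcome; the bound of $6$ in the statement is generous, and this direct argument in fact yields the tight bound $1$.
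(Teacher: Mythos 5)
There is a genuine gap here, though part of the blame lies with a typo in the statement as printed. As written, both occurrences read $\partial D \cap (F \times \{0\})$, and for that literal statement your argument is fine but vacuous: components of the embedded curve $\partial D$ lying in a single face are of course pairwise disjoint, so any two essential ones span an edge of $\mathcal{AC}(F)$. But the statement actually intended (and the one the paper uses in the proofs of Lemmas~\ref{lem:newsurfincomp2} and~\ref{lem:alternatesi}) compares an arc of $\partial D \cap (F \times \{0\})$ with an arc of $\partial D \cap (F \times \{1\})$, both regarded as arcs in $F$ via the projection $F \times [0,1] \to F$; in the applications, $\alpha$ lies in $F_i \times \{t_i\}$ and $\beta$ in $F_i \times \{t_{i-1}\}$. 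For arcs on the two different horizontal faces, embeddedness of $\partial D$ gives you nothing: after projecting to $F$ the top and bottom arcs of an essential disk can intersect, and bounding their distance in $\mathcal{AC}(F)$ by a universal constant is precisely the nontrivial content of Masur--Schleimer's Lemma 12.12 (on holes in the disk complex of an $I$-bundle), which this paper cites rather than proves. Your closing remark that the bound of $6$ is ``generous'' and that the argument yields the tight bound $1$ should have been a warning sign: if the lemma really reduced to disjointness, there would be no reason to attribute it to Masur--Schleimer, and the constants $<6$ and $<8$ appearing in its applications would be pointless.

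So the missing idea is an actual argument about essential disks in the product handlebody $F \times [0,1]$: one must show that the arcs in which such a disk meets the two horizontal boundary components have uniformly close projections to $\mathcal{AC}(F)$ (for instance by analyzing how $D$ can be boundary-compressed to vertical disks and tracking how far each such move can shift the boundary arcs, as in Masur--Schleimer's treatment of $I$-bundles). Nothing in your proposal engages with this cross-face comparison, so it does not establish the lemma in the form the paper needs.
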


Note that the statement has been translated from the language of holes in the curve complex used in Masur-Schleimer's paper. We will use this Lemma to analyze disks in $X^-_i$, that intersect $\partial H$ in the subsurface $F_i \times \{1\}$ of the boundary of $F_i \times [0,1]$.

\begin{Lem}
\label{lem:newsurfincomp2}
Assume $\ell_0,\ldots,\ell_d$ is a strict $(d,k)$ flexipath for $k > 8$ and $D$ is a compressing disk for either component of $H_i \setminus S_i$ for some $i \leq d$, such that $\partial D$ intersects $\partial H_i$ in one or fewer arcs. Then $X_i$ is an inside block and either $D$ is a vertical disk or $i = 1$ and $\partial D$ is disjoint from a compressing disk for $H_i$. In particular, $S_i$ is incompressible as a properly embedded surface in $H_i$ for $i \geq 1$.
\end{Lem}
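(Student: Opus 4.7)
The plan is to argue by induction on $i$, with the base case $i=1$ supplied directly by Lemma~\ref{lem:newsurfincomp1}. So fix $i \geq 2$ and assume the full conclusion of the lemma for every $i' < i$; in particular, $S_{i-1}$ is incompressible in $H_{i-1}$, and every compressing disk for a component of $H_{i-1} \setminus S_{i-1}$ whose boundary meets $\partial H_{i-1}$ in at most one arc is vertical (absorbing the $i'=1$ exception into the conclusion). Let $D$ be a compressing disk for a component $X$ of $H_i \setminus S_i$ with $|\partial D \cap \partial H_i| \leq 1$, let $X_i \subset X$ be the unique level-$i$ block, and let $F$ be its block surface.

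I would first rule out the possibility that $X_i$ is an outside block. In that case $X_i \cong F \times [t_{i-1}, t_i]$ is glued to the level-$(i-1)$ piece of $X$ along the \emph{entire} face $F \times \{t_{i-1}\}$, and $\partial D \cap \partial H_i$ is confined to the vertical collar $\partial F \times [t_{i-1}, t_i]$. After isotoping $D$ into minimal position with respect to $\Sigma \times \{t_i\}$, the intersection $D \cap X_i$ becomes a union of vertical strips $\alpha \times [t_{i-1}, t_i]$, and collapsing the product slab then pushes $D$ to a compressing disk $D'$ in $H_{i-1}$ that again satisfies $|\partial D' \cap \partial H_{i-1}| \leq 1$. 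By the inductive hypothesis $D'$, and hence $D$, is vertical, which contradicts the assumption that $X_i$ is the outside block at level $i$.

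Assume now that $X_i$ is an inside block, and let $A = L_{i-1} \times [t_{i-1}, t_i]$ be the vertical annulus (or pair of annuli) separating $X_i$ from the rest of $X$. I would isotope $D$ to minimize $|D \cap A|$ with all loops of intersection essential on $A$. An outermost arc of $D \cap A$ boundary-compresses $D$ into two disks, and the hypothesis $|\partial D \cap \partial H_i| \leq 1$ forces one of them to be an essential disk for a component of $H_{i-1} \setminus S_{i-1}$ with at most one arc on $\partial H_{i-1}$; by induction that disk is vertical, which pins the corresponding arc of $\partial D \cap F$ to lie within $\mathcal{AC}(F)$-distance one of $\ell_{i-1}$. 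Loops of $D \cap A$ are eliminated by the band isotopy used at the end of Lemma~\ref{lem:newsurfincomp1}: choose an arc $\beta$ in $F$ running from $L_{i-1}$ to $L_i$, make $D$ transverse to the rectangle $\beta \times [t_{i-1}, t_i]$, and reduce via outermost arcs, either contradicting minimality or pushing an intersection loop across $A$ and into a pair-of-pants. Once $D \cap A = \emptyset$, the disk lies entirely in the product $X_i \cong F \times [t_{i-1}, t_i]$.

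Finally, Masur--Schleimer's Lemma~\ref{lem:masurschleimer} applied to this product bounds the diameter of the projection of $\partial D \cap (F \times \{t_{i-1}\})$ into $\mathcal{AC}(F)$ by $6$. The strict flexipath inequality $d_F(\ell_{i-1}, \ell_{i+1}) > 6$ supplied by Corollary~\ref{coro:flexipath} (together with the much larger inequalities at the odd indices) then forces every arc in this projection to sit within distance one of $\ell_{i-1}$, and the only essential disk in the product with boundary behaviour of this form and $|\partial D \cap \partial H_i| \leq 1$ is a vertical disk parallel to $A$. Specialising to $|\partial D \cap \partial H_i| = 0$ rules out compressing disks for $S_i$ itself, giving incompressibility for every $i \geq 1$. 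The main obstacle is the loop-removal step for $D \cap A$, where one has to verify that the band isotopy transports cleanly through the level-$i$ product and that the boundary-compressed disks inherit enough control from the inductive hypothesis to keep the projection of $\partial D \cap F$ trapped near $\ell_{i-1}$; once that bookkeeping is done, the synthesis of Masur--Schleimer's diameter-$6$ bound with the strict flexipath distance-greater-than-$6$ gap seals the argument.
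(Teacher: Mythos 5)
Your induction skeleton (base case $i=1$ from Lemma~\ref{lem:newsurfincomp1}, then induct upward) and the choice of ingredients (Masur--Schleimer together with the flexipath distance gap) are the right ones, but the critical step --- ruling out the possibility that $X_i$ is an outside block --- does not work as you describe, and that is exactly where the paper expends its effort. Collapsing the outside block $X_i \cong F\times[t_{i-1},t_i]$ into $H_{i-1}$ does \emph{not} produce a disk $D'$ with $|\partial D' \cap \partial H_{i-1}| \le 1$: segments of $\partial D$ that lay in $S_i$ inside the outside block (on the vertical annuli $V_i$ and on the pair-of-pants at level $t_{i-1}$) become additional arcs on $\partial H_{i-1}$ after the ambient isotopy. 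The paper notes this explicitly, remarking that ``$\partial D'$ may intersect $\partial H_i$ in multiple arcs, all but one of which are contained in $F_i\setminus F'_i$.'' With several boundary arcs, the inductive hypothesis simply does not apply to $D'$, so your claimed contradiction does not materialize.

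What the paper actually does is cut $D'$ once more along the \emph{horizontal} connecting surface one level further down and extract an innermost/outermost sub-disk $E$; that sub-disk does meet the boundary in at most one component, and the argument then splits: if $E$ lands in $H_{i-2}$ the inductive hypothesis kills it (the level-$(i-2)$ block is an outside block), and if $E$ lands in the level-$(i-1)$ product block, Masur--Schleimer and the triangle inequality give $d_{F_{i-1}}(\ell_{i-2},\ell_i)\le 8 < k$, contradicting the flexipath bound. That is where the $6$-versus-$8$ projection estimate actually lives. Your proposal instead deploys Masur--Schleimer in the \emph{inside}-block case, where the paper has no need of it: once $X_i$ is an inside block, the paper shows $D$ lies entirely in the product $X_i$ and is disjoint from the connecting surface, and such a disk is automatically vertical, no projection estimate required. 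Your inside-block route --- cutting along the vertical annulus $A = L_{i-1}\times[t_{i-1},t_i]$ as in the base case --- is a genuinely different decomposition from the paper's horizontal cut, and both the loop-removal step and the inference that $D\cap A=\emptyset$ forces $D\subset X_i$ would need independent justification at higher levels; but the unrepaired outside-block analysis is the decisive gap.
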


Note that each $S_i$ will still be boundary compressible because every properly embedded positive genus surface in a handlebody is either compressible or boundary compressible.

\begin{proof}
We will again prove this by induction on $i$. The base case when $i = 1$ follows from Lemma~\ref{lem:newsurfincomp1}. For the inductive step, we will assume that $S_i$ satisfies the conclusion of the Lemma and let $D$ be a compressing disk for a component $X$ of $H_{i+1} \setminus S_{i+1}$ such that $D \cap \partial H_{i+1}$ has one or zero components. Let $X_1,\ldots,X_{i+1}$ be the blocks in $X$, let $F_1,\ldots,F_{i+1}$ be the block surfaces and let $F'_0,\ldots,F'_i$ be the connecting surfaces.

First, consider the case when $X_{i+1}$ is an inside block. We can isotope $D$ transverse to $F'_i$ and so that $D \cap F'_i$ is a collection of loops and arcs that are essential in $F'_i$. Any component of $\partial D \cap V_{i+1}$ that is disjoint from $\partial H_{i+1}$ is a boundary parallel arc in the vertical annulus $V_{i+1}$ and can be isotopied out of $V_{i+1}$. We can therefore assume that each component of $D \cap X_{i+1}$ intersects $\partial H_{i+1}$ non-trivially. Since $D \cap \partial H_{i+1}$ is a single loop or arc, this implies that $D \cap X_{i+1}$ is a single component. 

Any arc of $\partial D \cap V_i$ with both endpoints in $L_i \times \{t_i\}$ can be isotoped out of $V_i$, so we can assume $\partial D \cap V_i$ consists entirely of vertical arcs. Since $D \cap (F_{i+1} \times \{t_{i+1}\})$ contains at most one arc, $D \cap (F_i \times \{t_i\})$ must also contain at most one arc or loop, though this arc or loop may intersect the connecting surface $F'_i$ in multiple components. One of these arcs will bound an outermost disk $E \subset D \cap H_i$ that intersects $\partial H_i$ in a single component. However, because $X_{i+1}$ is an inside block, $X_i$ must be an outside block so the inductive hypothesis implies that no such disk exists. Thus $D \cap X_{i+1}$ must be all of $D$. In this case, $D$ is a compressing disk for $X_{i+1}$ that intersects $F_{i+1} \times \{t_{i+1}\}$ in a single arc and is disjoint from $F'_i$. Such a disk must be isotopic to a vertical disk $\alpha \times [t_i,t_{i+1}]$ so $D$ satisfies the conclusions of the Lemma. 

Next, consider the case when $X_{i+1}$ is an outside block. For sufficiently small $\epsilon$, the intersection $D \cap (F_{i+1} \times [t_{i+1} - \epsilon, t_{i+1}])$ is either empty or a vertical band of the form $\alpha \times [t_{i+1} - \epsilon, t_{i+1}]$, where $\alpha$ is a loop or arc in $F_{i+1}$. Because $S_{i+1}$ is disjoint from the interior of $F_{i+1} \times \{t_i\}$,  there is an ambient isotopy of $X$ that takes $F_{i+1} \times [t_{i+1} - \epsilon, t_{i+1}]$ onto $X_{i+1}$ and sends the rest of $X_{i+1}$ into $H_i$. If we replace $D$ with its image after this isotopy, then $D$ will intersect $X_{i+1}$ in a vertical disk or annulus $\alpha \times [t_i, t_{i+1}]$, where $\alpha$ is an essential, properly embedded loop or arc in $F_{i+1}$. 

Define $X' = X \cap H_i$ and let $D' = D \cap X'$. Note that $\partial D'$ may intersect $\partial H_i$ in multiple arcs, all but one of which are contained in $F_i \setminus F'_i$. The remaining arc will intersect $\ell_{i+1}$ in two points, namely the endpoints of $\alpha$. We can make $D'$ transverse to $F'_{i-1}$ and isotope it so that $D' \cap F'_{i-1}$ is a collection of properly embedded loops and arcs in the two surfaces. If $D' \cap F'_{i-1}$ is non-empty, let $E$ be an innermost disk bounded by a loop of $D' \cap F'_{i-1}$ or an outermost disk bounded by an arc of $D' \cap F'_{i-1}$. Otherwise, let $E = D'$. 

If $E$ is contained in $H_{i-1}$ then it intersects $\partial H_{i-1}$ in a single component. However, since $X_i$ is an inside block, $X_{i-1}$ must be an outside block so the inductive hypothesis (or Lemma~\ref{lem:newsurfincomp1} in the case when $i = 2$) rules this out. 

If $E$ is contained in $X_i = F_i \times [t_{i-1}, t_i]$ then it is a properly embedded disk such that $\partial D' \cap (F_i \times \{t_{i-1}\})$ is either disjoint from $\ell_{i-1} \times \{t_{i-1}\}$ or intersects this loop in two points. If $\partial D' \cap (F_i \times \{t_{i-1}\})$ is a single arc $\alpha$ then $D'$ is isotopic to $\alpha \times [t_{i-1}, t_i]$, where $\alpha$ is intersects $\ell_{i-1}$ in at most two points, so $d_{F_i}(\alpha, \ell_{i-1}) \leq 2$. Because the arc $\alpha$ is isotopic to an arc of $\partial D'$, it intersects $\ell_{i+1}$ in at most two points, so $d_{F_i}(\alpha, \ell_{i+1}) \leq 2$ and by the triangle inequality, $d_{F_i}(\ell_{i-1}, \ell_{i+1}) \leq 4 < k$, which again contradicts our initial assumption.

Otherwise, if $\partial D \cap (F_i \times \{t_i\})$ consists of multiple arcs then at least one of these arcs will be disjoint from $\ell_{i+1} \times \{t_{i+1}\}$ and we will let $\alpha$ be one such arc. Similarly, let $\beta$ be an arc of $\partial D \cap (F_i \times \{t_{i-1}\})$ that is disjoint from $\ell_{i-1}$. Because $\alpha$ and $\beta$ are arcs in the boundary of a properly embedded disk in $F_i \times [t_{i-1}, t_i]$, Lemma~\ref{lem:masurschleimer} implies that $d_{F_i}(\alpha, \beta) < 6$. Thus the triangle inequality implies that $d_{F_i}(\ell_{i-1}, \ell_{i+1}) \leq 8 < k$. This final contradiction rules out the case when $X_{i+1}$ is an outside block, so we conclude that $X_{i+1}$ is an inside block and $D$ is a vertical disk.
\end{proof}

\begin{Coro}
\label{coro:incompsurf}
If $\ell_0,\ldots,\ell_d$ is a $(d,k)$ flexipath for $k > 8$ and $d \geq 4$ then the flat surface $S$ defined by the flexipath is incompressible in $M$.
\end{Coro}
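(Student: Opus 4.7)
The plan is to derive Corollary~\ref{coro:incompsurf} from Lemma~\ref{lem:newsurfincomp2} by applying the lemma in both the forward and reverse directions along the flexipath. First I observe that reversing a $(d,k)$ flexipath produces another $(d,k)$ flexipath, since the conditions in the definition of a flexipath are symmetric under reversal. Applying the construction of Section~\ref{sect:flat} to the reversed sequence $\ell_d, \ldots, \ell_0$ yields the same closed flat surface $S$ together with a mirror family of intermediate surfaces $\widetilde{S}_j$. In particular, $\widetilde{S}_{d-1} := V_d \cup P_{d-1} \cup V_{d-1} \cup \cdots \cup V_1$ is properly embedded in the handlebody $\widetilde{H}_{d-1} := H^+ \cup (\Sigma \times [0,1])$ with boundary on $\Sigma \times \{0\}$, and by Lemma~\ref{lem:newsurfincomp2} it is incompressible in $\widetilde{H}_{d-1}$.

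Suppose for contradiction that $D$ is a compressing disk for $S$ in $M$. The decomposition $S = S_{d-1} \cup (P_{d-1} \cup V_d)$ expresses $S$ as $S_{d-1}$ with the annulus $A := P_{d-1} \cup V_d$ (a pair-of-pants with one of its three boundary circles capped off by the disk $V_d$) attached along $\partial S_{d-1} = L_{d-1} \times \{1\}$; the symmetric decomposition $S = \widetilde{S}_{d-1} \cup (P_0 \cup V_0)$ is analogous. Using the product structure of these annuli, any essential loop in $S$ is isotopic to an essential loop in $S_{d-1}$ and also to an essential loop in $\widetilde{S}_{d-1}$. After isotoping $\partial D$ into $S_{d-1}$, I would push the interior of $D$ off $V_d$ via an innermost-disk argument (since $V_d$ is a disk, any loop of $D \cap V_d$ bounds a subdisk of $D$ that can be pushed across $V_d$) and off $\Sigma \times \{1\}$ (using the distance-$k$ filling condition to surger away any innermost subdisk of $D$ whose boundary lies essentially on one side of $\Sigma \times \{1\}$ in $H^+$), eventually arranging $D \subset H_{d-1}$ with $\partial D \subset S_{d-1}$ essential. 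This contradicts the incompressibility of $S_{d-1}$ in $H_{d-1}$ from Lemma~\ref{lem:newsurfincomp2}.

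I expect the main obstacle to be the push of $D$ into $H_{d-1}$: portions of $D$ may run essentially through the disk $V_d$ into $H^+$, and the distance-$k$ filling hypothesis must be invoked delicately (in the spirit of the arguments in Section~\ref{sect:splitting1}) to resolve these. In the cases where this particular push is obstructed, the symmetric argument provides the fallback: isotope $\partial D$ into $\widetilde{S}_{d-1}$ instead and push $D$ into $\widetilde{H}_{d-1}$, contradicting the incompressibility of $\widetilde{S}_{d-1}$. The key point supporting this dichotomy is that $V_0 \subset H^-$ and $V_d \subset H^+$ lie at opposite ends of $M$ and are disjoint, so $D$ cannot essentially use both capping disks simultaneously, and therefore at least one of the two symmetric pushes must succeed.
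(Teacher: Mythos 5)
Your proposal has a genuine gap in the dichotomy claim. You assert that ``$D$ cannot essentially use both capping disks simultaneously, and therefore at least one of the two symmetric pushes must succeed,'' justified only by the observation that $V_0$ and $V_d$ are disjoint. This does not follow. The compressing disk $D$ lies in a complementary component $X$ of $M \setminus S$, and $X$ contains subhandlebodies of \emph{both} $H^-$ and $H^+$ (the components of $H^\pm \setminus V_{0,d}$ on that side of $S$). Nothing prevents $D$ from running essentially through both of these pieces, in which case neither proposed push can be carried out. Relatedly, your step ``push the interior of $D$ ... off $\Sigma \times \{1\}$ using the distance-$k$ filling condition to surger away any innermost subdisk'' is not well-founded: $\Sigma \times \{1\}$ is compressible in $H^+$, so an innermost loop of $D \cap (\Sigma \times \{1\})$ may well be essential on that surface, and the distance-$k$ filling controls the position of compressing disks relative to the curves $\ell_i$, not the ability to surger $D$ across $\Sigma \times \{1\}$.

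The paper's proof circumvents exactly this difficulty by not trying to isotope the whole disk $D$ to one side. Instead, one makes $D$ transverse to the connecting surface $F'_1$ at level 1 and extracts an innermost/outermost subdisk $E$ of $D$ cut off by $D \cap F'_1$; this $E$ lies entirely in $H_1$ or in $H'_1 = M \setminus H_1$, regardless of how $D$ is distributed between the two sides. One then applies Lemma~\ref{lem:newsurfincomp2} \emph{in its full relative form} --- the statement constraining compressing disks for components of $H_i \setminus S_i$ that meet $\partial H_i$ in at most one arc or loop, not merely the final sentence asserting incompressibility of $S_i$ --- to the subdisk $E$, ruling it out in both cases (with the second case handled by the same reversal symmetry you correctly identified). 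Your proposal uses only the weaker closed-incompressibility consequence of Lemma~\ref{lem:newsurfincomp2}, which is not enough to control a subdisk like $E$ that has boundary partly on $\partial H_1$, and this is why the argument cannot be closed as written.
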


\begin{proof}
Assume for contradiction there is a compressing disk $D$ for the flat surface $S$. We can isotope $\partial D$ disjoint from the disks $V_0$ and $V_{d+1}$ in the handlebodies on either side of $\Sigma \times [0,1]$. We will consider two cases, depending on whether $D$ is on the side of $S$ that contains the inside block at level $d$ or the outside block at level $d$.

Let $X$ be the block at level $1$ on the side of $S$ that contains $D$ and assume $D$ has been isotoped transverse to the connecting surface $F'_1$ at level $1$. Let $E$ be an innermost disk of $D$ cut off by a loop of $D \cap F'_1$ or an outermost disk cut of by an arc of $D \cap F'_1$. If $E$ is contained in $H_1$ then Lemma~\ref{lem:newsurfincomp1} implies that $D$ is a vertical disk $\alpha \times [t_0, t_1]$ in the inside block at level $1$ such that $\alpha \cap F'_1$ is a single arc $\beta$. However, if this were the case, we could extend $D$ by the vertical disk $\beta \times [t_1, t_2]$ in the outside block at level $2$. The resulting disk would intersect $\partial H_2$ in a single arc, contradicting Lemma~\ref{lem:newsurfincomp2}. 

Thus we can assume that the disk $E$ is in the closure of complement $H'_1 = M \setminus H_1$. Note that the construction of $S$ is symmetric if we reverse the direction of the path $\ell_0,\ldots,\ell_d$, and thus start building the surface from $H^+$ instead of $H^-$. Thus the intersection $S \cap H'_1$ is a flat surface defined by a $(d,k)$ flexipath, so by repeating the argument above, Lemmas~\ref{lem:newsurfincomp1} and~\ref{lem:newsurfincomp2} again imply that no such $E$ can exist.
\end{proof}

Finally, we need to calculate the genus of $S$.

\begin{Lem}
\label{lem:genusofS}
Let $\ell_0,\dots,\ell_i$ be the first $i+1$ vertices of a flexipath in $\mathcal{C}(\Sigma)$, where $\Sigma$ is the boundary of a genus $g$ handlebody $H$. Let $S_i \subset H$ be the induced layered surface. Then $S_i$ is compact, properly embedded and two-sided. If $i$ is even then $S_i$ is a once-punctured, genus $\frac{1}{2}i$ surface. If $i$ is odd then $S_i$ is a twice-punctured, genus $\frac{1}{2}(i-1)$ surface.
\end{Lem}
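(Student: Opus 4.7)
The plan is to induct on $i$, tracking the Euler characteristic and the number of boundary circles, and then recover the genus from these two invariants. For the base case $i=0$, the layered surface $S_0 = V_0$ is the meridian disk of the two-handle attached along $\ell_0 \times \{0\}$, i.e.\ a once-punctured sphere of genus $0$; it is manifestly compact, properly embedded (with $\partial S_0 = \ell_0\times\{0\}\subset \partial H$), and two-sided.

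For the inductive step, I would observe that $S_{i+1} = S_i \cup P_i \cup V_{i+1}$, where $P_i$ is a pair of pants (so $\chi(P_i)=-1$) lying in the level surface $\Sigma \times \{t_i\}$, and $V_{i+1}$ is either one annulus (when $i+1$ is even) or two annuli (when $i+1$ is odd), each with Euler characteristic zero. By construction $P_i$ is glued to $\partial S_i \subset \Sigma \times \{t_i\}$ along one or two circles (depending on the parity of $i$) and to the bottom of $V_{i+1}$ along the remaining circles of $\partial P_i$; the pieces are otherwise disjoint. Since every gluing occurs along simple closed curves, inclusion–exclusion yields
\[
\chi(S_{i+1}) \;=\; \chi(S_i) + \chi(P_i) + \chi(V_{i+1}) \;=\; \chi(S_i) - 1,
\]
and so $\chi(S_i) = 1 - i$ for all $i$ in the relevant range.

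Next, I would identify $\partial S_i$ with $L_i \times \{t_i\}$, the top circle(s) of the uppermost vertical piece $V_i$. By definition $L_i = \ell_i$ (one circle) when $i$ is even, and $L_i$ is two parallel copies of $\ell_i$ when $i$ is odd, giving $|\partial S_i| = 1$ or $2$ accordingly. Combining with $\chi(S_i) = 1-i$ via $\chi = 2 - 2g - |\partial|$ produces $g(S_i) = i/2$ when $i$ is even and $g(S_i) = (i-1)/2$ when $i$ is odd, which is precisely the claimed genus in each case. Compactness follows because $S_i$ is a finite union of compact pieces; proper embeddedness follows because the interiors of all the $V_j$ and $P_j$ lie in the interior of $H$ while $\partial S_i = L_i \times \{t_i\}$ sits in $\partial H$; and two-sidedness follows because $S_i$ is an orientable surface embedded in an orientable $3$-manifold.

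The main obstacle is a bookkeeping one: verifying that at each stage the three circles of $\partial P_i$ really do line up with $\partial S_i$ on one side and the bottom of $V_{i+1}$ on the other, with the parity of $i$ dictating which side contributes one circle and which contributes two. This matching is built into the flexipath definition (each even-indexed $\ell_i$ bounds a once-punctured torus in which both $\ell_{i-1}$ and $\ell_{i+1}$ are non-separating), but it should be written out once explicitly in order to justify the Euler-characteristic additivity and the alternation of boundary counts used above.
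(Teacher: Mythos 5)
Your proof is correct and follows essentially the same inductive scheme as the paper: start from $S_0$ a disk and add one pair-of-pants per step. The paper tracks the genus and boundary count directly at each step (gluing a pair of pants along one loop adds a boundary component, gluing along two loops adds a handle), whereas you package the same bookkeeping into the Euler characteristic $\chi(S_i)=1-i$ and recover the genus from $\chi = 2 - 2g - |\partial|$; this is a harmless reformulation. You also justify two-sidedness via orientability (correct, since the ambient handlebody is orientable and the normal bundle of an orientable surface in an orientable $3$-manifold is trivial), while the paper instead observes that $S_i$ is separating; both arguments are valid. Your closing caveat about matching the boundary circles of $P_i$ to $\partial S_i$ and to $V_{i+1}$ is exactly what the flexipath condition (that $\ell_i$ bounds a once-punctured torus containing $\ell_{i\pm1}$) guarantees, and the paper handles it at the same level of informality, so it is not a real gap.
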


\begin{proof}
We will proceed by induction on $i$. For $i = 0$, $S_0$ is a disk with boundary in $\Sigma$ isotopic to $\ell_0$. In other words, $S_0$ is a once-punctured sphere (a genus-zero surface). 

For the inductive step, assume that the Lemma holds for $i-1$. Note that we construct $S_i$ by gluing a pair-of-pants to $S_{i-1}$ along either one or two loops. If $i$ is odd, then we glue the pair-of-pants to $S_{i-1}$ along the single loop $L_{i-1} = \ell_{i-1}$, turning the once-punctured, genus $\frac{1}{2}(i-1)$ surface into a twice-punctured surface of genus $\frac{1}{2}(i-1)$. If $i$ is even then we glue the pair-of-pants to $S_{i-1}$ along the two loops $L_{i-1}$, which increases its genus by one and reduces the number of boundary components back to one. In other words, it turns the twice-punctered surface of genus $\frac{1}{2}((i-1)-1) = \frac{1}{2}(i-2)$ into a once-punctured surface of genus $\frac{1}{2}(i-2) + 1 = \frac{1}{2}i$. Thus by induction, the Lemma holds for every positive value of $i$.

Finally, note that $S$ is properly embedded, connected, compact and separating by construction. Since a handlebody is orientable and $S$ is separating, $S$ must be two-sided.
\end{proof}

\begin{proof}[Proof of Theorem~\ref{thm:main1}]
Let $\Sigma$ be a compact, connected, closed, orientable genus $g \geq 2$ surface. By Corollary~\ref{coro:flexipath}, there is a strict $(d,k)$ flexipath for every pair of values $k > 0$, $d \geq 2$, so for a given even integer $d \geq 4$, let $k = max\{d+1, 9\}$ and let $\ell_0,\ldots,\ell_d$ be a $(d,k)$ flexipath. Let $M$ be a closed, distance $k$ filling with respect to $\ell_0$ and $\ell_d$ of the path manifold $M'$ defined by $\ell_0,\ldots,\ell_d$. Then by Corollary~\ref{coro:filledist}, the induced Heegaard surface $\Sigma$ for $M$ has distance $d(\Sigma) = d$.

Let $S$ be the embedded flat surface in $M$ induced by the path $\ell_0,\ldots,\ell_d$. Then $S$ is the result of gluing an annulus into the two boundary components of the surface $S_{d-1}$ defined in Lemma~\ref{lem:genusofS}. The surface $S_{d-1}$ has genus $\frac{d-2}{2}$ so Lemma implies that $S$ is a compact, connected, closed, two-sided surface of genus $\frac{1}{2}d$. Since $\ell_0,\ldots,\ell_d$ is a $(k,d)$ flexipath with $k > 8$ and $d \geq 2$, Corollary~\ref{coro:incompsurf} implies that $S$ is an incompressible surface.
\end{proof}

\section{The alternate Heegaard splitting}
\label{sect:splitting2}

Let $\ell_0,\dots,\ell_d$ be an almost strict $(d,k)$ flexipath and let $j$ be the index such that $\ell_{j-1} \cap \ell_{j+1}$ is a single point in the once-punctured torus $F_j$. Let $S^-$ be a surface in a handlebody $H^-$ defined by the path $\ell_0,\ldots,\ell_{j-1}$. The reverse path $\ell_d,\ell_{d-1},\ldots,\ell_{j+1}$ is also a flexipath, so we can use it to construct a surface $S^+$ in a second handlebody $H^+$. By the construction of $M$, we can embed $H^-$ and $H^+$ in $M$ so that their boundaries coincide with $\Sigma$.

Note that each loop of $\partial S^- \subset \partial H^-$ is parallel to $\ell_{j-1}$ and each loop of $\partial S^+ \subset \partial H^+$ is parallel to $\ell_{j+1}$. Thus the two surfaces $S^-$ and $S^+$ do not form a single closed surface, but rather each of the two loops of $\partial S^-$ intersects each of the two loops in $\partial S^+$ in single point, for a total of four points of intersection.

Let $N(\ell_{j-1})$ and $N(\ell_{j+1})$ be the images in $\Sigma$ of the regular neighborhoods used in the final steps of constructing $S^-$ and $S^+$, so that $\partial N(\ell_{j-1}) = L_{j-1}$, etc. Then in particular, $F_{j-1}$ will be the complement in $\Sigma$ of $N(\ell_{j-1})$ and $F_{j+1}$ is the complement of $N(\ell_{j+1})$. The intersection $Q = N(\ell_j) \cap N(\ell_{j+1})$ is a disk whose boundary consists of two opposite arcs in $\partial S_{j-1}$ and two opposite arcs in $\partial S_{j+1}$. We will call $Q$ the \textit{flipped square}. The intersection $F'' = F_j \cap F_{j+1}$ is a once-punctured surface of genus $g-1$, namely the complement in $\Sigma$ of the once-punctured torus $N(\ell_j) \cup N(\ell_{j+1})$.

Define $S \subset M$ to be the union $S^- \cup S^+ \cup Q \cup F''$. The portion of this surface near the disk $Q$ is shown in Figure~\ref{fig:flippedsquare}. 
\begin{figure}[htb]
  \begin{center}
  \includegraphics[width=3.5in]{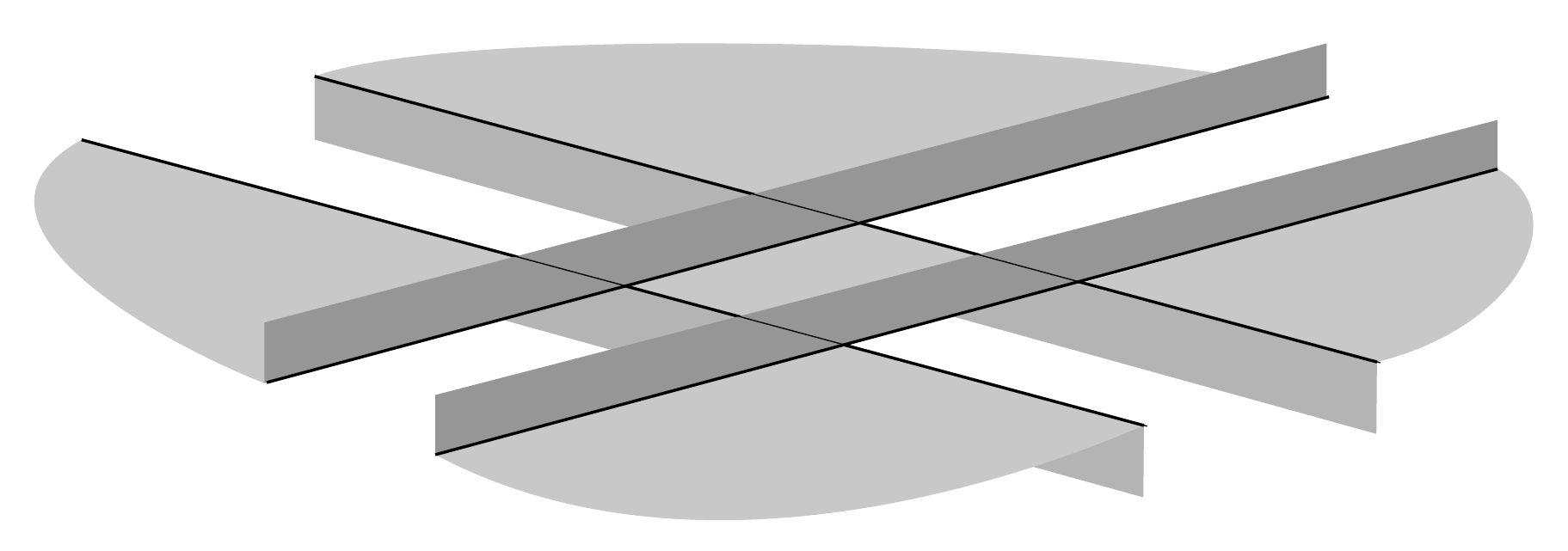}
  \caption{A flipped square defining an index-one flat surface.}
  \label{fig:flippedsquare}
  \end{center}
\end{figure}

We can check that $S$ is a surface by noting that each surface $S^\pm$ meets each of $Q$ and $F''$ in an arc. The endpoints of these eight arcs are four points in $\Sigma$ and in a regular neighborhood of each point, the four components of the union meet to form a disk. 

\begin{Def}
We will say that the surface $S \subset M$ is the \textit{index-one flat surface} defined by the almost strict flexipath $\ell_0,\ldots,\ell_d$.
\end{Def}

The terminology here follows that in~\cite{me:upperbnd}, where such surfaces arise from a different, but very much complementary, construction.

An alternate view of $Q \cup F''$, without the adjacent vertical annuli is shown in blue in Figure~\ref{fig:globalflipped}. Notice that a regular neighborhood in $\Sigma$ of $Q \cup F''$ is a twice-punctured surface with the same genus as $\Sigma$. (In the example shown in the Figure, it's a twice-punctured genus-two surface.) The union of $Q \cup F''$ with the adjacent vertical annuli deformation retracts on to $Q \cup F''$, and thus has the same Euler characteristic, but has four boundary components instead of two. The reader can check that this implies that the union of $Q \cup F''$ with the four adjacent vertical annuli is a four-times punctured surface of genus $g-1$ (where $g$ is the genus of $\Sigma$.)
\begin{figure}[htb]
  \begin{center}
  \includegraphics[width=3.5in]{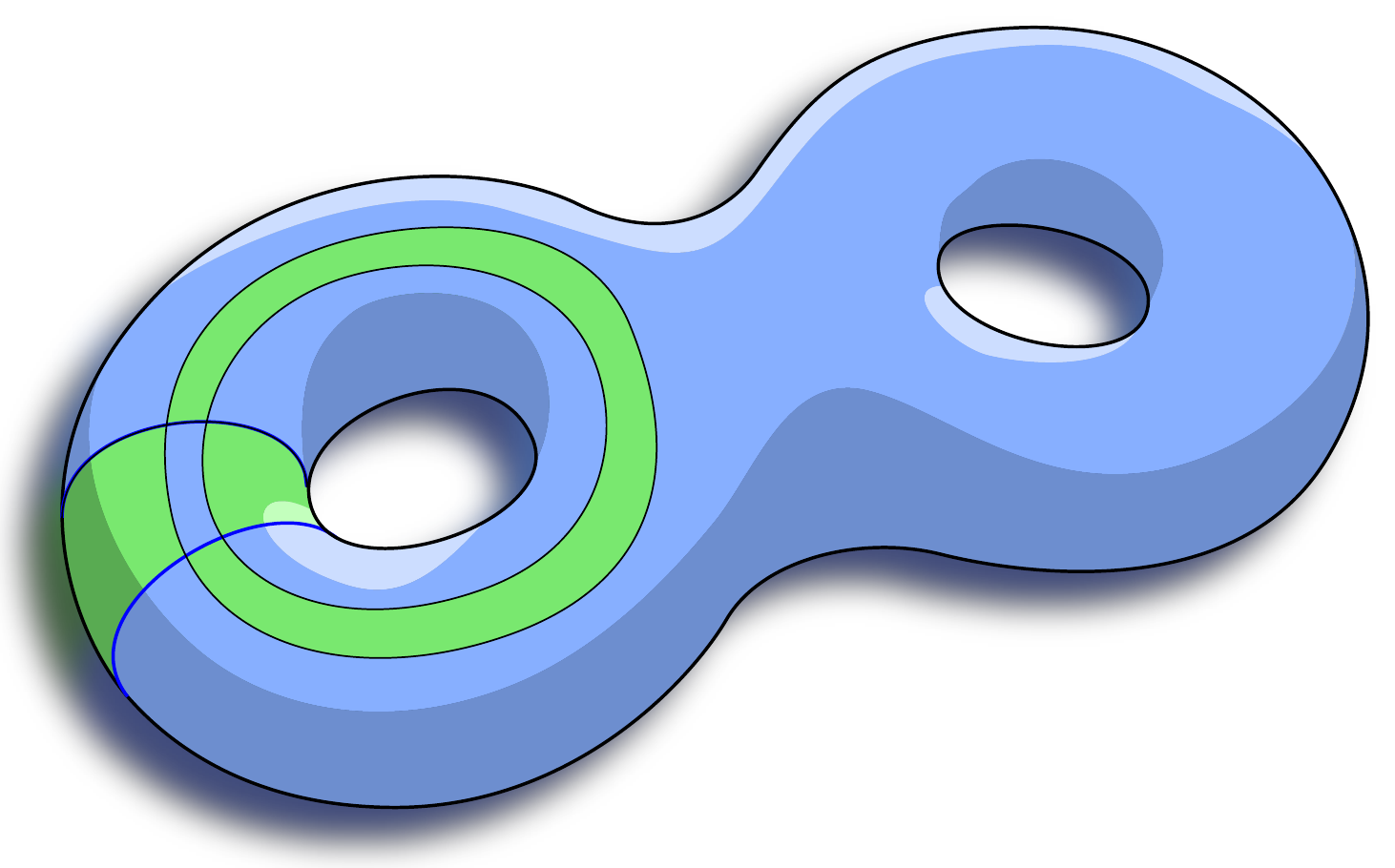}
  \caption{The horizontal part of an index-one flat surface near a flipped square, within a genus-two surface $\Sigma$.}
  \label{fig:globalflipped}
  \end{center}
\end{figure}

Let $A_-$, $B_-$ be the closures of the components of $H^- \setminus S^-$ and let $A_+$, $B_+$ be the closures of the components of $H^+ \setminus S^+$ such that $A_-$ and $A_+$ contain the annuli $N(\ell_{j-1})$ and $N(\ell_{j+1})$, respectively. Then the surface $S$ separates $M$ into components $H^-_S = A_- \cup B_+$ and $H^+_S = A_+ \cup B_-$. By Corollary~\ref{lem:newsurfacesincomp}, each of $A_\pm$ and $B_\pm$ is a handlebody. The handlebodies $A_-$ and $B_+$ meet along the disk $N(\ell_{j-1}) \setminus N(\ell_{j+1})$ so their union $H^-_S$ is a handlebody. Similarly, $A_+$ and $B_-$ meet along $N(\ell_{j+1}) \setminus N(\ell_{j-1})$, forming the second handlebody $H^+_S$. Thus we have the following:

\begin{Lem}
\label{lem:itsheegaard}
The triple $(S, H^-_S, H^+_S)$ is a Heegaard splitting of $M$.
\end{Lem}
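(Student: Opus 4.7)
The plan is to verify the three defining properties of a Heegaard splitting: that $H^-_S$ and $H^+_S$ are handlebodies, that $M=H^-_S\cup H^+_S$, and that $\partial H^-_S=\partial H^+_S=S$. All the hard structural work is already done: Corollary~\ref{lem:newsurfacesincomp} applied to the flexipaths $\ell_0,\ldots,\ell_{j-1}$ and $\ell_d,\ldots,\ell_{j+1}$ tells us that each of $A_\pm,B_\pm$ is itself a handlebody, so the task reduces to understanding how these four handlebodies are glued together across $\Sigma$.

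First I would describe the $\Sigma$--parts of the boundaries explicitly. By construction, $\partial A_-$ decomposes as $S^-\cup N(\ell_{j-1})$ (with $S^-$ and the annulus $N(\ell_{j-1})$ meeting along $L_{j-1}$), and similarly $\partial B_-=S^-\cup F_{j-1}$, $\partial A_+=S^+\cup N(\ell_{j+1})$, $\partial B_+=S^+\cup F_{j+1}$. Because $\ell_{j-1}$ and $\ell_{j+1}$ cross in a single point (the almost-strict condition), the annuli $N(\ell_{j-1})$ and $N(\ell_{j+1})$ meet transversely in the flipped square $Q$, and the standard ``annulus minus square'' computation shows that the complementary piece $N(\ell_{j-1})\cap F_{j+1}=N(\ell_{j-1})\setminus\operatorname{int}(N(\ell_{j+1}))$ is a single disk. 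By the same argument, $N(\ell_{j+1})\cap F_{j-1}$ is a disk.

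Next I would invoke the well-known fact that the union of two handlebodies glued along a disk in their boundaries is again a handlebody. Since $A_-\cap B_+$ coincides with the disk $N(\ell_{j-1})\cap F_{j+1}$ (the two pieces meet only on $\Sigma$, and only along the portion of $\partial A_-$ sitting inside $\partial B_+$), it follows that $H^-_S=A_-\cup B_+$ is a handlebody, and symmetrically $H^+_S=A_+\cup B_-$ is a handlebody. The set-theoretic identity $M=H^-\cup H^+=(A_-\cup B_-)\cup(A_+\cup B_+)=(A_-\cup B_+)\cup(A_+\cup B_-)=H^-_S\cup H^+_S$ gives the second property for free.

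Finally, I would compute $\partial H^-_S$ by removing the interior of the gluing disk from $\partial A_-\cup\partial B_+$:
$$\partial H^-_S \;=\; \bigl(S^-\cup N(\ell_{j-1})\bigr)\cup\bigl(S^+\cup F_{j+1}\bigr)\;\setminus\;\operatorname{int}\bigl(N(\ell_{j-1})\cap F_{j+1}\bigr).$$
The leftover piece of $N(\ell_{j-1})$ is exactly $Q$, and the leftover piece of $F_{j+1}$ is exactly $F''=\Sigma\setminus\operatorname{int}(N(\ell_{j-1})\cup N(\ell_{j+1}))$, so $\partial H^-_S=S^-\cup Q\cup S^+\cup F''=S$. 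The same computation applied to $H^+_S=A_+\cup B_-$ (with the roles of the two annuli swapped) produces the same surface $S$. The only place I expect any mild care is in this last bookkeeping step, making sure that the complementary annular region and the disk $Q$ assemble correctly so that $H^-_S\cap H^+_S$ is precisely $S$ and nothing more; once the four $\Sigma$--subsurfaces $N(\ell_{j\pm1})$, $F_{j\pm1}$ and their pairwise intersections are tabulated, this is automatic.
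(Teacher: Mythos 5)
Your proposal is correct and follows essentially the same route as the paper: decompose $H^\pm_S$ into $A_\pm$, $B_\pm$, invoke Corollary~\ref{lem:newsurfacesincomp} to see each is a handlebody, and observe that $A_-$, $B_+$ (respectively $A_+$, $B_-$) meet along a single disk so that their union is again a handlebody. The only difference is that you spell out the boundary bookkeeping verifying $\partial H^\pm_S = S$ and $M = H^-_S \cup H^+_S$, which the paper treats as implicit.
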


A Heegaard splitting is said to be \textit{strongly irreducible} if every pair of compressing disks on opposite sides of the Heegaard surface intersect non-trivially. It is straightforward to check that a stabilized Heegaard splitting is not strongly reducible (i.e.\ \textit{weakly reducible}), so in order to check that $S$ is not a stabilization of $\Sigma$, we will show that $S$ is strongly irreducible.

\begin{Lem}
\label{lem:alternatesi}
If $S$ is the Heegaard surface induced by a $(d, k)$ flexipath such that $d \geq 4$ is even and $k > 8$ then $S$ is strongly irreducible.
\end{Lem}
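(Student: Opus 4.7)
\emph{Plan.} I would argue by contradiction: assume there exist disjoint compressing disks $D^-$ for $H^-_S$ and $D^+$ for $H^+_S$. The plan is to use the block-structure analysis of Section~\ref{sect:incompressible} to reduce the problem to a combinatorial question about how $D^\pm$ must cross the flipped square $Q$, and then show that the toggle structure at $Q$ forces the two disks to meet.

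The first step is to observe that the sub-paths $\ell_0,\ldots,\ell_{j-1}$ and $\ell_d,\ell_{d-1},\ldots,\ell_{j+1}$ are both strict $(d',k)$ flexipaths, since the failure of strictness for the original almost-strict path occurs only at the single even index $j$, which is omitted from each sub-path. Hence $S^-$ and $S^+$ are properly embedded, incompressible surfaces in their respective handlebodies by Corollary~\ref{coro:incompsurf}, and more importantly, Lemmas~\ref{lem:newsurfincomp1} and~\ref{lem:newsurfincomp2} restrict any compressing disk for a component of the complement of $S^\pm$ to a vertical disk sitting in an inside block, whose top boundary arc is distance at most one from the projection of an essential disk in the relevant level's handlebody.

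The second step is to isotope $D^-$ and $D^+$ to meet $Q \cup F''$ transversely and minimally, in arcs and loops essential on $Q \cup F''$, and then to apply innermost/outermost surgery as in the proofs of Lemmas~\ref{lem:newsurfincomp1} and~\ref{lem:newsurfincomp2}. Each resulting sub-disk of $D^-$ lies in one of $A_-$ or $B_+$, and each sub-disk of $D^+$ lies in one of $A_+$ or $B_-$. By the previous step, any such sub-disk whose boundary meets $S^\pm$ in an essential arc is a vertical disk in an inside block. Combining this with the strict flexipath condition $d_{F_i}(\ell_{i-1},\ell_{i+1})>6$ at every even index $i\neq j$ rules out such vertical disks away from the levels immediately adjacent to $Q$, exactly as in the proof of Lemma~\ref{lem:newsurfincomp2}. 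Consequently, each of $D^-$ and $D^+$ must contain at least one arc of $\partial D^\pm$ properly embedded in the flipped square $Q$.

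The main obstacle, which I expect to be the subtlest step, is the final combinatorial argument showing that arcs on $Q$ contributed by $D^-$ must cross arcs on $Q$ contributed by $D^+$. The disk $Q$ has four boundary arcs alternating around $\partial Q$ between two copies of arcs from $\partial S^-$ (parallel to $\ell_{j-1}$) and two copies from $\partial S^+$ (parallel to $\ell_{j+1}$). The toggle structure of the flipped square implies that the sub-handlebody $A_-\cup B_+$ of $H^-_S$ meets $Q$ along the pair of opposite $\partial S^-$ arcs, while $A_+\cup B_-$ meets $Q$ along the pair of opposite $\partial S^+$ arcs. Thus an essential arc of $\partial D^-\cap Q$ joins the two opposite $\partial S^-$ sides of $Q$, whereas an essential arc of $\partial D^+\cap Q$ joins the two opposite $\partial S^+$ sides. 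Any two arcs in a disk joining opposite sides in the two different pairs must intersect, so $D^-\cap D^+\neq\emptyset$, contradicting the assumption and establishing strong irreducibility of $(S,H^-_S,H^+_S)$.
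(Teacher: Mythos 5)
Your high-level plan --- assume disjoint compressing disks $D^\pm$, force arcs of $\partial D^\pm$ into the flipped square $Q$, and then observe that arcs crossing between the two $\partial S^-$ sides of $Q$ must intersect arcs crossing between the two $\partial S^+$ sides --- is exactly the paper's strategy, and your final combinatorial argument is correct. Your observation that the two sub-paths inherit the strictness condition, and your choice of Lemmas~\ref{lem:newsurfincomp1} and~\ref{lem:newsurfincomp2} as the workhorses, are also on target.

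However, the middle step has a genuine gap. Isotoping $D^\pm$ transverse to $Q \cup F''$ does not decompose the disks: $Q \cup F''$ sits inside $S = \partial H^\pm_S$, so a compressing disk meets it only along its boundary and no sub-disks result. The surfaces that actually separate $A_+$ from $B_-$ (respectively $A_-$ from $B_+$) inside $H^+_S$ (respectively $H^-_S$) are the internal compressing disks $Q^+ = N(\ell_{j+1}) \setminus N(\ell_{j-1})$ and $Q^- = N(\ell_{j-1}) \setminus N(\ell_{j+1})$; the paper cuts $D^+$ along $Q^+$ and takes an outermost sub-disk $E$. Moreover, your phrase ``rules out such vertical disks away from the levels immediately adjacent to $Q$'' elides the one step that actually uses the hypothesis $k > 8$: when $E$ lands in $B_-$, the paper takes an outermost or innermost sub-disk of $E$ cut off by $E \cap F'_{j-2}$, uses Lemma~\ref{lem:newsurfincomp2} to force it into the block $X_{j-1}$, and then combines Lemma~\ref{lem:masurschleimer} with the triangle inequality to conclude $d_{F_{j-1}}(\ell_{j+1},\ell_{j-2}) < 8$, a contradiction. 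Only after this case is excluded does the $E \subset A_+$ case, via the outside-block clause of Lemma~\ref{lem:newsurfincomp2}, yield at least two arcs of $\partial E$ in $\Sigma$, at most one meeting $Q^+$, and hence an arc lying entirely in $Q$ that must end on the $\partial S^+$ sides. Make the cut along $Q^\pm$ rather than $Q \cup F''$ and spell out the Masur--Schleimer distance estimate, and the argument is complete.
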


\begin{proof}
Assume for contradiction there are disjoint compressing disks $D^- \subset H^-_S$, $D^+ \subset H^+_S$. Note that the two loops $\partial S^-$ and the two loops $\partial S^+$ are essential, no component of $\partial S^-$ is parallel in $S$ to a component of $S^+$, and any two of these loops intersect minimally in $S$. Isotope $\partial D^-$ and $\partial D^+$ within $S$ so as to minimize their intersections with these four loop. We can do this, for example, by choosing an abstract hyperbolic metric on $S$ in which the loops $\partial S^-$ and $\partial S^+$ are geodesics. If we then isotope $\partial D^-$ and $\partial D^+$ to geodesics, we will ensure that they remain disjoint, but intersect $\partial S^- \cup \partial S^+$ minimally. We can extend the isotopies of $\partial D^-$ and $\partial D^+$ to isotopies of the disks $D^-$ and $D^+$ within $H^-$ and $H^+$, respectively. 

As above, let $Q = N(\ell_{j-1}) \cap N(\ell_{j+1})$ be the flipped square in $S$. Let $Q^+$ be the separating compressing disk $N(\ell_{j+1}) \setminus N(\ell_{j-1})$ for $H^+_S$. This appears in Figure~\ref{fig:flippedsquare} as two of the white regions adjacent to the flipped square in the middle. (The two regions connect up outside the region shown in the Figure to form a disk.) Both $D^+$ and $Q^+$ are contained in $A_+ \cup B_-$. By construction, the boundaries of $D^+$ and $Q^+$ will be transverse and we can isotope the interior of $D^+$ to be transverse to $Q^+$, such that $D^+ \cap Q^+$ is a collection of arcs. If $D^+$ is disjoint from $Q^+$ then define $E = D^+$. Otherwise, let $E$ be an outermost disk cut off by an arc of $D^+ \cap Q^+$. 

Since $Q^+$ separates $A_+$ from $B_-$, the disk $E$ is contained in either $A_+$ or $B_-$. If $E$ is contained in $B_-$, let $X_{j-1}$ be the block of $B_-$ at level $j-1$. Let $F'_{j-2}$ be the horizontal subsurface between $X_{j-1}$ and the block at level $j-2$. Isotope $E$ (in the complement of a regular neighborhood of $F_{j-1}$) to be transverse to $F'_{j-2}$ and so that the intersection is minimal. 

Let $E'$ be an outermost disk of $E$ cut off by an arc of $E \cap F'_{j-2}$, or an innermost disk bounded by a loop of $E \cap F'_{j-2}$. Because of the way we chose the labels on $A_-$ and $B_-$, the block $X_{j-1}$ is an inside block, so the block at level $j-2$ is an outside block. (Note that this block will be a component of $H_0^- \setminus S_0$ if $j = 2$.) Thus Lemma~\ref{lem:newsurfincomp2} implies that $E'$ must be contained in $X_{j-1}$. The boundary of $E$ intersects $Q^+$ in at most one arc (since $E'$ is contained in $E$) and intersects $\ell_{j-2}$ in at most arc, since $E'$ is outermost in $E \setminus F'_{j-2}$. Applying the Masur-Schleimer Lemma (Lemma~\ref{lem:masurschleimer}) and the triangle inequality, as in the proof of Lemma~\ref{lem:newsurfincomp2}, implies that $d_{F_{j-1}}(\ell_{j+1}, \ell_{j-2}) < 8$. This contradicts the assumption that $\ell_0,\ldots,\ell_d$ is a $(d,k)$ flexipath for $k > 8$, so we conclude that $E$ must be contained in $A_+$.

In this case, note that the block of $A_+$ adjacent to $\partial H^+$ is an outside block. Thus Lemma~\ref{lem:newsurfincomp2} implies that $E \cap \partial H^+$ contains at least two arcs, since there are no disks with just one arc in $\partial H^+$. All these arcs are in $N(\ell_{j+1})$, and at most one of the arcs intersects $Q^+$, so some arc in $\partial E$ must run across the quadrilateral $Q$, parallel to the arcs of $\ell_{j-1}$, as indicated by the red portion of a disk above $Q$ in Figure~\ref{fig:disksintersect}.
\begin{figure}[htb]
  \begin{center}
  \includegraphics[width=3.5in]{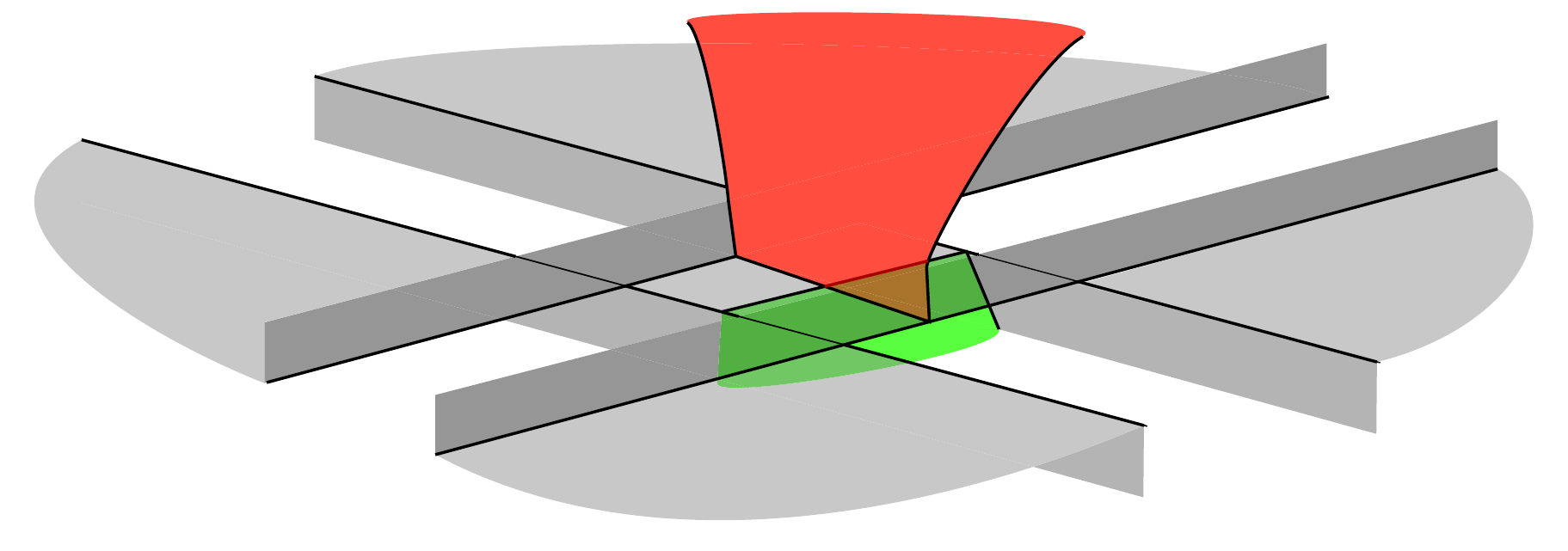}
  \caption{Local pictures of $D^-$ and $D^+$}
  \label{fig:disksintersect}
  \end{center}
\end{figure}

We can apply a similar argument to $A_- \cup B_+$. Note that in the case when $E$ is contained in $A_-$, the bottom block in $A_-$ is at level $j + 1$ and the adjacent outside block is at level $j+2$. Therefore, we must choose $j + 2 \leq d$, or equivalently $j \leq d-2$. Thus we need $d \geq 4$ so that we can choose $2 \leq j \leq d-2$. 

The argument for $A_- \cup B_+$ implies that an arc of $\partial D^-$ must cross $Q$ parallel to $\ell_{j+1}$. However, because the loops $\ell_{j-1}$ and $\ell_{j+1}$ intersect in a point of $Q$, each arc of $\partial D^-$ in $Q$ must intersect each arc of $\partial D^+$ in $Q$. This contradicts the assumption that $\partial D^-$ and $\partial D^+$ are disjoint, so we conclude that $S$ is a strongly irreducible Heegaard surface.
\end{proof}

Note that $S$ has distance exactly two, since the disks $Q^+$ and $Q^-$ are compressing disks on opposite sides of $S$ and the complement of their union is the interior of the surfaces $S^-$ and $S^+$, each of which contains many essential loops. This immediately implies that $S$ is not isotopic to $\Sigma$, since $d(\Sigma) \geq 6$. We can also show they are distinct by calculating the genus of $S$, which we need to do for the proof of Theorem~\ref{thm:main2} anyway.

\begin{Lem}
\label{lem:alternategenus}
An index-one flat surface $S$ induced by a almost strict flexipath $\ell_0,\dots,\ell_d$ has genus $\frac{1}{2}d + g - 1$.
\end{Lem}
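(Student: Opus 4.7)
The plan is to compute the Euler characteristic of $S$ by inclusion--exclusion on the four pieces $S^-$, $S^+$, $Q$, $F''$ that make up $S$, then convert to genus using $\chi(S) = 2 - 2\,\mathrm{genus}(S)$.

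First I identify the topology of each piece. Since $d$ is even and the exceptional index $j$ of the almost strict flexipath is even, both sub-paths $\ell_0,\ldots,\ell_{j-1}$ and $\ell_d,\ldots,\ell_{j+1}$ have odd length. Applying Lemma~\ref{lem:genusofS} in both directions gives that $S^-$ is a twice-punctured surface of genus $(j-2)/2$ and $S^+$ is a twice-punctured surface of genus $(d-j-2)/2$. Together with the disk $Q$ and the once-punctured surface $F''$ of genus $g-1$, the Euler characteristics are $\chi(S^-) = 2-j$, $\chi(S^+) = j-d+2$, $\chi(Q) = 1$, and $\chi(F'') = 3-2g$, summing to $8 - d - 2g$.

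Next I work out the pairwise, triple, and quadruple intersections, all of which are concentrated at the flipped square. The loops $\ell_{j-1}$ and $\ell_{j+1}$ cross transversely in a single point, so each of the two parallel circles of $\partial N(\ell_{j-1})$ crosses each of the two parallel circles of $\partial N(\ell_{j+1})$ exactly once; the resulting four corners of $Q$ form both $S^- \cap S^+$ and $Q \cap F''$, and every triple intersection and the quadruple intersection equals this same four-point set. On each boundary circle of $\partial S^-$, the segment inside $N(\ell_{j+1})$ is one arc of $\partial Q$ and the complementary arc lies in $\partial F''$, so $S^- \cap Q$ and $S^- \cap F''$ each consist of two arcs, and symmetrically for $S^+$. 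The pairwise Euler contribution is therefore $4+2+2+2+2+4 = 16$, the four triples together contribute $4\cdot 4 = 16$, and the quadruple contributes $4$.

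Putting these together,
\[
\chi(S) = (8 - d - 2g) - 16 + 16 - 4 = 4 - d - 2g,
\]
so $\mathrm{genus}(S) = (2 - \chi(S))/2 = d/2 + g - 1$, as claimed. The hard part will be the corner bookkeeping at the flipped square: I would verify, using a local picture of $\ell_{j-1} \cup \ell_{j+1}$, that all four corners lie in each of $S^-$, $S^+$, $Q$, and $F''$, and that on each of the two boundary circles of $\partial S^\pm$ the arc in $\partial Q$ and the arc in $\partial F''$ together partition the circle. The observation that $\partial F''$ is a single loop made of four arcs alternating between $\partial N(\ell_{j-1})$ and $\partial N(\ell_{j+1})$ makes this transparent.
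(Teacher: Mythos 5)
Your proof is correct and takes a genuinely different route from the paper's. The paper computes the genus of $S$ by exploiting the Heegaard splitting structure: it finds the genera of the four handlebodies $A_\pm$, $B_\pm$ on either side of $S^-$ and $S^+$ (capping $S^\pm$ with either the annulus $N(\ell_{j\pm 1})$ or the complementary genus-$(g-1)$ subsurface $F_{j\pm 1}$, and invoking Lemma~\ref{lem:genusofS}), and then uses the fact that gluing handlebodies along a disk adds their genera, since $H^-_S = A_- \cup B_+$ and $H^+_S = A_+ \cup B_-$. That route gets the genus of each side essentially for free and carries a built-in consistency check that the two sides agree. You instead compute $\chi(S)$ directly by inclusion--exclusion on the decomposition $S = S^- \cup S^+ \cup Q \cup F''$, with all the combinatorics concentrated at the four corners of the flipped square; this is more hands-on and requires the corner bookkeeping you flag at the end, but it does not rely on $S$ being a Heegaard surface or on the disk-gluing additivity of handlebody genus. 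The arithmetic checks out: singles contribute $8 - d - 2g$, pairs contribute $16$, triples contribute $16$, the quadruple contributes $4$, giving $\chi(S) = 4 - d - 2g$ and $\mathrm{genus}(S) = \frac{1}{2}d + g - 1$ as required.
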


\begin{proof}
By Lemma~\ref{lem:itsheegaard}, $S$ is a Heegaard surface, and we will calculate the genus of each of the handlebodies that make up its complement. As above, the surface $S^-$ separates $H^-$ into components $A^-$, $B^-$ where $A^-$ contains $N(\ell_{j-1}) \subset \partial H^-$ and $B^-$ contains $F_{j-1} \subset \partial H^-$. Thus the boundary of $A^-$ is the union of $S^-$ and the annulus $N(\ell_{j-1})$. By assumption, $j$ is even so $S^-$ has genus $\frac{1}{2}(j - 2)$ by Lemma~\ref{lem:genusofS}. When we glue the annulus between the two boundary components of $S^-$, we find that $\partial A^-$ has genus $\frac{1}{2}j$. On the other hand, the boundary of $B^-$ is the union of $S^-$ with the genus $g-1$ surface $F_i$ along two loops, and thus has genus $\frac{1}{2}j + (g-1)$.

We can calculate genera of the components of $H^+ \setminus S_{P_+}$ similarly. The path $\ell_d,\ell_{d-1},\ldots,\ell_{j}$ has length $d - j$. (Recall that the length of the path is the number of edges, which is one less than the number of vertices.) Thus by the above argument, $\partial A^+$ has genus $\frac{1}{2}(d-j)$ and $\partial B^+$ has genus $\frac{1}{2}(d-j) + (g-1)$. When we glue two handlebodies along a disk, the genus of the resulting handlebody is the sum of the original genera. Thus we find that $\partial H^-_S$ has genus $\frac{1}{2}j + \frac{1}{2}(d-j) + (g-1) = \frac{1}{2}d + (g-1)$. Similarly, $\partial H^+_S$ has genus $\frac{1}{2}j + (g-1) + \frac{1}{2}(d-j) = \frac{1}{2}d + (g-1)$. As expected, the two genera agree and we find that the Heegaard surface $S$ has genus $\frac{1}{2}d + (g-1)$.
\end{proof}

An alternate way to calculate the genus of $S$ is to count the number of pairs-of-pants used to construct it. As indicated schematically in Figure~\ref{fig:frombothsides2}, there is again one pair-of-pants for each edge in the path $\ell_0,\ldots,\ell_d$ with four exceptions: The first and last edges correspond to the annuli on the left and right sides of the schematic, and the two edges adjacent to the flipped square correspond to a genus $g-1$ subsurface with four punctures, rather than a four-punctured sphere composed two pairs-of-pants.
\begin{figure}[htb]
  \begin{center}
  \includegraphics[width=4.5in]{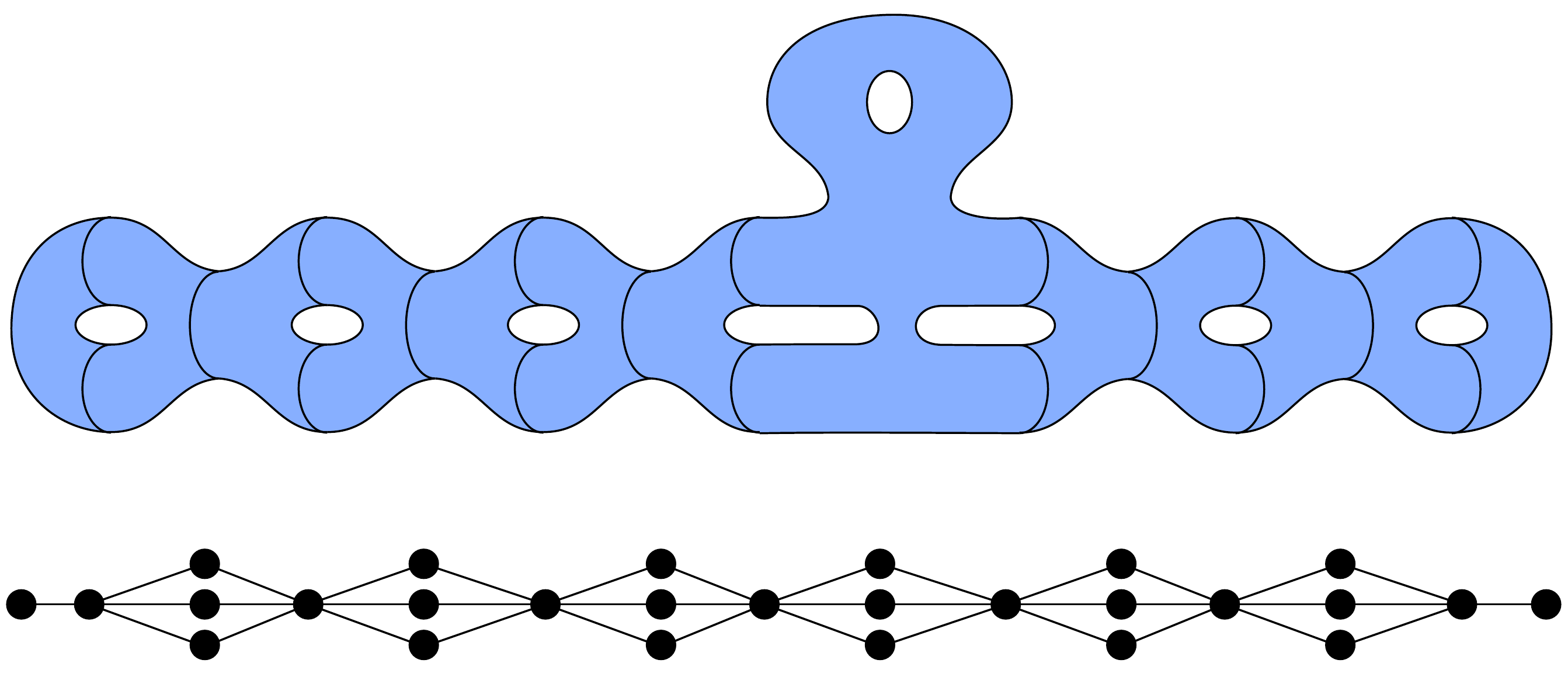}
  \caption{A schematic picture of the alternate Heegaard surface, along side the path used to define it.}
  \label{fig:frombothsides2}
  \end{center}
\end{figure}

\begin{proof}[Proof of Theorem~\ref{thm:main2}]
Let $\Sigma$ be a compact, connected, closed, orientable genus $g \geq 2$ surface. By Corollary~\ref{coro:flexipath}, there is an almost strict $(d,k)$ flexipath for every pair of values $k > 0$, $d \geq 2$ (with $d$ even), so for a given even integer $d \geq 4$, let $k = max\{d+1, 9\}$ and let $\ell_0,\ldots,\ell_d$ be a $(d,k)$ flexipath. Let $M$ be a closed, distance $k$ filling with respect to $\ell_0$ and $\ell_d$ of the path manifold $M'$ defined by $\ell_0,\ldots,\ell_d$. Then by Corollary~\ref{coro:filledist}, the induced Heegaard surface $\Sigma$ for $M$ has distance $d(\Sigma) = d$.

Let $S$ be the index-one flat surface in $M$ induced by the path $\ell_0,\ldots,\ell_d$. By Lemma~\ref{lem:alternategenus}, $S$ is a compact, connected, closed, two-sided surface of genus $\frac{1}{2}d + (g-1)$. By Lemma~\ref{lem:itsheegaard}, the surface $S$ determines a Heegaard splitting $(S, H^-_S, H^+_S)$ for $M$. Since $\ell_0,\ldots,\ell_d$ is an almost strict $(d,k)$ flexipath with $k > 8$ and $d \geq 4$, Lemma~\ref{lem:alternatesi} implies that $S$ is strongly irreducible. Since the genus of $S$ is strictly greater than that of $\Sigma$, the two surfaces cannot be isotopic. Moreover, because $S$ is strongly irreducible, it is not stabilized, so in particular $S$ is not a stabilization of $\Sigma$.
\end{proof}

\bibliographystyle{amsplain}
\bibliography{highdist}

\end{document}